% SIAM Article Template
\documentclass[hidelinks,onefignum,onetabnum]{siamart220329}
%\documentclass[review,hidelinks,onefignum,onetabnum]{siamart220329}

% Information that is shared between the article and the supplement
% (title and author information, macros, packages, etc.) goes into
% ex_shared.tex. If there is no supplement, this file can be included
% directly.

%%%For review purposes
%%\usepackage[inline]{showlabels}
%%\usepackage{refcheck}

% SIAM Shared Information Template
% This is information that is shared between the main document and any
% supplement. If no supplement is required, then this information can
% be included directly in the main document.

% Packages and macros go here
\usepackage{lipsum}
\usepackage{amsfonts}
\usepackage{graphicx}
\usepackage{epstopdf}
\ifpdf
  \DeclareGraphicsExtensions{.eps,.pdf,.png,.jpg}
\else
  \DeclareGraphicsExtensions{.eps}
\fi

%% Below the authors preamble

\usepackage{amsmath,amssymb,amsfonts,stmaryrd}
\usepackage{enumerate}
\usepackage{cite}
\usepackage[margin=3cm]{geometry}
\usepackage{hyperref}
\usepackage{siunitx}
\usepackage{graphicx}
\usepackage{mathtools}
\usepackage{array}
\usepackage{subcaption}
\usepackage{todonotes}
\usepackage{algorithm}
\usepackage{algorithmicx}
\usepackage{algpseudocode}

%delete numbers non-cited equations
%\mathtoolsset{showonlyrefs=true}
\flushbottom

%%% For the table
%%%%%%%%%%%%%%%%%%%%%%%%%%%%%% LyX specific LaTeX commands.
%% Because html converters don't know tabularnewline
\providecommand{\tabularnewline}{\\}
%% A simple dot to overcome graphicx limitations

\usepackage{booktabs}
\usetikzlibrary{calc}

\usepackage{multirow}
\usepackage{array}
\usepackage{subcaption}
\usepackage{todonotes}
\usepackage{dcolumn}

%Theorem declarations

%Macros

\DeclareMathOperator{\dom}{dom}

\DeclareMathOperator{\gra}{graph}

\DeclareMathOperator{\prox}{prox}

\DeclareMathOperator{\conv}{co}

\DeclareMathOperator*{\argmin}{argmin}

\newcommand{\Rex}{\overline{\mathbb{R}}}

\newcommand{\R}{\mathbb{R}}
\newcommand{\N}{\mathbb{N}}
\newcommand{\ball}{\mathbb{B}}

\newcommand{\cC}{\mathcal{C}}

\newcommand{\Rnm}{\mathbb{R}^{m}}

\newcommand*\barbf[1]{\bar{\mathbf{#1}}}
\newcommand*\hatbf[1]{\hat{\mathbf{#1}}}

\newcommand{\Asp}[1]{{{\mathtt{A}}_{#1 }}}

%% Until here

\let\epsilon\varepsilon
\let\subseteq\subset

% Add a serial/Oxford comma by default.

% Used for creating new theorem and remark environments
\newsiamremark{remark}{Remark}
\newsiamremark{hypothesis}{Hypothesis}
\newsiamremark{assumption}{Assumption}
\newsiamremark{example}{Example}
\crefname{hypothesis}{Hypothesis}{Hypotheses}
\newsiamthm{claim}{Claim}

% Sets running headers as well as PDF title and authors
\headers{The Boosted Double-Proximal Subgradient Algorithm}{F.J. Arag\'on-Artacho, P. P\'erez-Aros, and D. Torregrosa-Bel\'en}

% Title. If the supplement option is on, then "Supplementary Material"
% is automatically inserted before the title.
\title{The Boosted Double-Proximal Subgradient Algorithm \\ for Nonconvex Optimization\thanks{Submitted to the editors \today.
\funding{F. J. Arag\'on-Artacho and D. Torregrosa-Bel\'en were partially supported by the Ministry of Science, Innovation and Universities of Spain and the European Regional Development Fund (ERDF) of the European Commission, Grant PGC2018-097960-B-C22, and by the Generalitat Valenciana (AICO/2021/165).
P. P\'erez-Aros was supported by  Centro de Modelamiento Matem\'atico
(CMM), ACE210010 and FB210005, BASAL
funds for center of excellence and ANID-Chile grant: Fondecyt Regular 1200283 and Fondecyt Regular 1220886 and Fondecyt Exploración 13220097.
D. Torregrosa-Bel\'en was supported by MINECO and European Social Fund (PRE2019-090751) under the program ``Ayudas para contratos predoctorales para la formaci\'{o}n de doctores''
2019.}}}

% Authors: full names plus addresses.
\author{Francisco J. Arag\'on-Artacho \thanks{Department of Mathematics,
                             University of Alicante,
                             San Vicente del Raspeig Alicante, 03690, Spain
  (\email{francisco.aragon@ua.es, david.torregrosa@ua.es}).}
\and Pedro P\'erez-Aros\thanks{Instituto de Ciencias de la Ingeniería,
       			 Universidad de O'Higgins,
       			 Rancagua, 2820000, Chile
  (\email{pedro.perez@uoh.cl}).}
\and David Torregrosa-Bel\'en\footnotemark[2]}

\usepackage{amsopn}

%%% Local Variables: 
%%% mode:latex
%%% TeX-master: "ex_article"
%%% End: 

% Optional PDF information
\ifpdf
\hypersetup{
  pdftitle={The Boosted Double-Proximal Subgradient Algorithm for Nonconvex Optimization},
  pdfauthor={F.J. Arag\'on-Artacho, P. P\'erez-Aros, and D. Torregrosa-Bel\'en}
}
\fi

% The next statement enables references to information in the
% supplement. See the xr-hyperref package for details.

%\externaldocument[][nocite]{ex_supplement}

% FundRef data to be entered by SIAM
%<funding-group specific-use="FundRef">
%<award-group>
%<funding-source>
%<named-content content-type="funder-name"> 
%</named-content> 
%<named-content content-type="funder-identifier"> 
%</named-content>
%</funding-source>
%<award-id> </award-id>
%</award-group>
%</funding-group>

\begin{document}

\maketitle

% REQUIRED
\begin{abstract}
In this paper we introduce the Boosted Double-proximal Subgradient Algorithm (BDSA), a novel splitting algorithm designed to address general structured nonsmooth and nonconvex mathematical programs expressed as sums and differences of composite functions. BDSA exploits the combined nature of subgradients from the data and proximal steps, and integrates a line-search procedure to enhance its performance. While BDSA encompasses existing schemes proposed in the literature, it extends its applicability to more diverse problem domains. We establish the convergence of BDSA under the Kurdyka--{\L}ojasiewicz property and provide an analysis of its convergence rate.
To evaluate the effectiveness of BDSA, we introduce a novel family of challenging test functions with an abundance of critical points. We conduct comparative evaluations demonstrating its ability to effectively escape non-optimal critical points. Additionally, we present two practical applications of BDSA for testing its efficacy, namely, a constrained minimum-sum-of-squares clustering problem and a nonconvex generalization of Heron's problem.
\end{abstract}

% REQUIRED
\begin{keywords}
nonconvex optimization, difference programming,  Kurdyka--{\L}ojasiewicz property, descent lemma,  minimum-sum of squares clustering, Heron's problem
\end{keywords}

% REQUIRED
\begin{MSCcodes}
49J53, 90C26, 90C30, 65K05, 90C46
\end{MSCcodes}

\section{Introduction}

In this paper we are concerned with the structured nonconvex optimization problem
\begin{equation}\label{eq:P1}
    \min_{x\in\mathbb{R}^n} f(x) +g(x)-\sum_{i =1 }^p h_i(\Psi_i (x)), \tag{P}
\end{equation}
where the function $f:\R^n\to\R$ is locally Lipschitz and satisfies the descent lemma, $g:\R^n\to{]-\infty,+\infty]}$ is lower-semicontinuous and prox-bounded, $h_i:\R^{m_i}\to\R$ are convex continuous functions and $\Psi_i:\R^n\to\R^{m_i}$ are differentiable functions with Lipschitz continuous gradients, for $i=1,\ldots,p$ (see Assumption~\ref{assump:1} for more specific details).

Problems in this form appear in a broad variety of fields such as machine learning, image recovery or signal processing~\cite{wen2018survey, lou2015weighted,zhang2009some}.
Numerous algorithms have been developed to handle simpler  instances of~\eqref{eq:P1} (see, e.g., \cite{an2017convergence, bot2019doubleprox, doi:10.1137/060655183, sun2003proximal}), but as far as we are aware of, not for the more general problem that we address in this work.

We present a new splitting algorithm, named \emph{Boosted Double-proximal Subgradient Algorithm} (BDSA), that makes use of the inherent structure of problem~\eqref{eq:P1}. More specifically, the method employs the subdifferential of $f$, the gradients of $\Psi_i$ and the \emph{proximal point operators} of the functions $g$ and $h_i$, for $i=1,\ldots,p$. Subsequently, an optional linesearch can be performed to obtain the final update, which intends to steer (or \emph{``boost''}) the iteration to a point with a reduced value of the objective function, in a similar manner than the linesearch introduced in~\cite{aragon2018accelerating, boostedDCA} permits to accelerate the \emph{Difference of Convex functions Algorithm} (DCA)~\cite{Oliveira2020,TAO1986249, pham2014recent}.

In our numerical tests, we show that the addition of the linesearch provides  major improvements in the performance of the method. On the one hand, it \emph{accelerates} the algorithm,  significantly reducing both the number of iterations and the time that it needs to converge. On the other hand, it may help the sequence to converge to \emph{better solutions}. Indeed, note that the algorithms employed for tackling this class of nonconvex problems usually converge to critical points (see Section~\ref{sect:BDSA}). Being a critical point is a necessary condition for local optimality, but not sufficient. Therefore, the algorithms often converge to critical points which are not local minima. We show that the linesearch introduced in our scheme helps  the method to converge to local minima by avoiding some of the non-desirable critical points.

The paper is structured as follows. We begin by introducing the notation and some basic notions of variational analysis in Section~\ref{sect:preliminaries}. In Section~\ref{sect:BDSA}, we present our algorithm and study its convergence properties in Subsection~\ref{subsec:convergence}. In Subsection~\ref{subsec:KL} we make use of the Kurdyka--{\L}ojasiewicz property to prove its global convergence and deduce some convergence rates.
Section~\ref{sect:numerical} contains multiple numerical experiments demonstrating the good performance of BDSA. The benefit of the linesearch for escaping non-minimal critical points is demonstrated with the introduction of a new family of challenging test functions in Subsection~\ref{subsect:avoidingcriticalpoints}. The reduction in time and iterations is illustrated with an  application of the \emph{minimum sum-of-squares clustering problem} and a generalization of the classical \emph{Heron problem} in Subsections~\ref{sect:numerical2} and~\ref{sect:numerical3}, respectively. We finish with some concluding remarks in Section~\ref{sec:conc}.

\section{Preliminaries and Notational Conventions}\label{sect:preliminaries}
 Throughout this paper, we use $\mathbb{R}^n$ to denote the Euclidean space of dimension $n$, while we denote the extended-real-valued line by $\Rex := \mathbb{R}\cup\{-\infty,+\infty\}$, and set $1/{0}=+\infty$ by convention. The notations $\|\cdot\|$ and $\langle \cdot, \cdot\rangle$ represent the Euclidean norm and inner product in $\mathbb{R}^n$, respectively. We use $\ball_\epsilon(\bar{x})$ to denote the closed ball of radius $\epsilon>0$ centered at $\bar{x}$.%The convergence of a sequence $(x^k)_{k\in\mathbb{N}}\subset\R^n$ to some point $\bar{x}\in\R^n$ is indicated by $x^k\to \bar x$. We use $\ball_\epsilon(\bar{x})$ to denote the closed ball of radius $\epsilon>0$ centered at $\bar{x}$.

Given $p$ positive integers $m_1,\ldots, m_p$, the inner product of the product space $ \mathbb{R}^{m_1}\times\cdots\times\mathbb{R}^{m_p}$ is defined by
\begin{equation*}
 \langle (x_1,\ldots,x_p),(y_1,\ldots,y_p)\rangle := \sum_{i=1}^p \langle x_i,y_i\rangle  \quad \text{for all } (x_1,\ldots,x_p),(y_1,\ldots,y_p)\in\Rnm,
\end{equation*}
with $m := \sum_{i=1}^p m_i$, and its induced norm is denoted by $\|(x_1,\ldots,x_p)\|$.
Vectors in product spaces will be marked with bold, e.g., $\mathbf{x} = (x_1,\ldots,x_p)\in\mathbb{R}^m$.

\subsection{Notions of Variational Analysis}

 Given some constant $L\geq 0$, a vector-valued function $F: C\subseteq \mathbb{R}^n \to \R^m$ is said to be  $L$-\emph{Lipschitz continuous} on $C$ if
 \begin{equation*}
 	\|F(x)-F(y)\| \leq L \|x-y\|\quad \text{for all } x,y\in C,
 \end{equation*}
 and \emph{locally Lipschitz continuous} around $\bar x\in C$ if it is Lipschitz continuous in some neighborhood of $\bar x$.

The  \emph{domain} of a function $f:\mathbb{R}^n\to\Rex$ is defined as $\dom f := \{ x \in\mathbb{R}^n : f(x) < +\infty\}$, and we say that $f$ is \emph{proper}  if it does not attain the value $-\infty$ and $ \dom f\neq \emptyset$. The function $f$  is \emph{lower-semicontinuous} (\emph{l.s.c.}) at some point $\bar{x}\in\mathbb{R}^n$ if $\liminf_{x\to\bar{x}} f(x) \geq f(\bar{x})$. The \emph{Dini upper directional derivative} of $f$ at some point $\bar{x}\in\dom f$ in the direction $d\in\R^n$ is defined as
$$d^{+} f(\bar{x} ; d):=\limsup_{t \downarrow 0} \frac{f(\bar{x}+t d)-f(\bar{x})}{t}.$$

Given $\bar{x}\in  f^{-1}(\R)$, the \emph{regular subdifferential} of the function $f$ at $\bar{x}\in\mathbb{R}^n$ is the closed and convex set of \emph{regular subgradients}
 \begin{equation*}
  \hat{\partial} f(\bar{x}) := \left\{ v\in\mathbb{R}^n : f(x) \geq f(\bar{x}) + \langle v, x-\bar{x}\rangle + o(\|x-\bar{x}\|) \right\}.
\end{equation*}
We say that $v\in\mathbb{R}^n$ is a (\emph{basic}) \emph{subgradient} of $f$ at $\bar{x}$ if there exists sequences $(x^k)_{k\in\mathbb{N}}$ and $(v^k)_{k\in\mathbb{N}}$, with $v^k\in\hat{\partial}f(x^k)$ for all $k\in\mathbb{N}$, such that $x^k\to\bar{x}$, $f(x^k)\to f(\bar{x})$ and $v^k\to v$, as $k \to \infty$. The (\emph{basic}) \emph{subdifferential} of $f$ at $\bar{x}$, denoted by  $\partial f(\bar{x})$, is the set of all subgradients of $f$ at $\bar{x}$. We use the convention $ \hat{\partial} f(\bar{x})={\partial} f(\bar{x}):=\emptyset$ if $|f(\bar x) |= +\infty$. We say that $f$ is \emph{lower regular} at $\bar{x}\in\dom f$ if $\partial f(\bar{x})=\hat{\partial} f(\bar{x})$.

Let us recall that the regular and basic subdifferentials coincide for convex functions and are equal to the convex subdifferential, which in an abuse of notation we denote by
\begin{equation*}
 \partial f(\bar{x}) := \left\{ v \in\mathbb{R}^n : f(x) \geq f(\bar{x}) + \langle v, x-\bar{x} \rangle \right\}.
\end{equation*}

If a function $f:\mathbb{R}^n\to\mathbb{R}$ is strictly differentiable at a point $\bar{x}\in\R^n$ then all its subdifferentials are singletons and coincide with its gradient, which is denoted by $\nabla f(\bar{x})$. We also use the same symbol to denote the transpose of the Jacobian matrix of a multivariable function $F=(F_1,\ldots,F_m):\mathbb{R}^n\to\mathbb{R}^m$, i.e., the matrix of gradients
$ \nabla F(x) = \left( \nabla F_1(x), \ldots, \nabla F_m(x)\right)$.

Given a function $f:\mathbb{R}^n\to\Rex$, its convex conjugate is the function $f^*:\mathbb{R}^n\to\Rex$ given by
\begin{equation*}
 f^*(x)  := \sup_{u\in\mathbb{R}^n} \left\{ \langle u,x\rangle -f(u) \right\}.
\end{equation*}
If $f$ is proper, the \emph{Fenchel--Moreau} theorem states that $f$ is convex and l.s.c. if and only if $f=f^{**}$, where $f^{**}:=(f^*)^*$ denotes the biconjugate of $f$.

Given a proper l.s.c. function $f:\mathbb{R}^n\to\Rex$ and some constant $\gamma>0$, the \emph{proximal mapping} (also known as the \emph{proximity operator} or \emph{proximal point operator}) is the multifunction $\prox_{\gamma f}:\R^n\rightrightarrows\R^n$ defined as the solution set of the optimization problem
\begin{equation*}\label{eq:argmin}
	\prox_{\gamma f}(x) := \argmin_{u\in\mathbb{R}^n} \left\{ f(u) + \frac{1}{2 \gamma} \|u-x\|^2\right\}.
\end{equation*}

A function $f:\mathbb{R}^n \to \overline{\mathbb{R}}$ is said to be \emph{prox-bounded} if there exists some ${\gamma} >0$ such that $f(\cdot)+\frac{1}{2{\gamma}} \|\cdot-x\|^2$ is bounded from below for all $x\in\mathbb{R}^n$. The supremum of the set of all such constants $\gamma$ is called the \emph{prox-boundedness threshold} of $f$ and is denoted by~${\gamma}^f$. For a proper, l.s.c. and prox-bounded function, the proximal mapping $\prox_{\gamma f}$ has full domain for any $\gamma\in{]0,{\gamma}^f[}$, but it may not be single-valued (see, e.g., \cite[Theorem~1.25]{MR1491362}). It is worth noting that all proper, convex and l.s.c. functions are prox-bounded, but the class is much larger. Any proper l.s.c. function $f:\R^n\to \Rex$ that is bounded from below by an affine function has a threshold of prox-boundedness $\gamma^f=+\infty$ (see~\cite[Example 3.28]{MR1491362}). For example,  the indicator function $\iota_C$ of a nonempty and closed set $C\subseteq\R^n$ is prox-bounded with threshold ${\gamma}^{\iota_C} = +\infty$.

\subsection{The Family of Upper-$\cC^2$ Functions}\label{subsec:upperC2}
The class of functions with a Lipschitz continuous gradient is very important in optimization. Its relevance relies on the fact that these functions verify the so-called \emph{descent lemma} (see, e.g.,~\cite[Lemma A.11]{MR3289054}). Namely, given a differentiable function $f:\mathbb{R}^n\to\mathbb{R}$ with $L_f$-Lipschitz continuous gradient, then the following inequality holds
\begin{equation}\label{eq:descentlemmaineq}
 f(y) \leq f(x) + \langle \nabla f(x),y-x\rangle + \frac{L_f}{2}\|y-x\|^2\quad \text{for all } x,y\in\mathbb{R}^n.
\end{equation}

Different authors have identified a larger family of functions that also satisfies an inequality similar to~\eqref{eq:descentlemmaineq}, preserving thus the same nice properties for optimization (see, e.g. \cite[Theorem~5.1]{MR1363364} and~\cite[Definition~10.29]{MR1491362}). We extend our analysis to this broader class of functions, which we present next.
\begin{definition}
 Let  $V \subseteq \mathbb{R}^n$ be an open, convex and bounded set and let $f :\R^n \to \Rex$ be Lipschitz continuous on $V$. We say that $f$ is $\kappa$-\emph{upper-$\cC^2$} on $V$ for some $\kappa\geq0$ if there exist a compact set $S$ (in some topological space) and some continuous functions $b: S \to \R^n$ and $c: S \to \R$ such that
 \begin{equation}\label{eq:upperC2}
 f(x)= \min\limits_{ s\in S} \left\{ \kappa\| x\|^2 - \langle b(s),x\rangle - c(s) \right\}\quad  \text{for all } x\in V.
 \end{equation}
\end{definition}
From~\eqref{eq:upperC2} it directly follows that $\kappa$-upper-$\cC^2$ functions are DC (difference of convex) functions with the specific DC decomposition
$$f(x)=\kappa \|x\|^2 - \max_{s\in S} \left\{\langle b(s), x\rangle+c(s)\right\},$$
since $x\mapsto \max_{s\in S} \left\{\langle b(s), x\rangle+c(s)\right\}$ is a convex continuous function.

The following result, based on~\cite[Theorem~5.1]{MR1363364}, establishes the relationship between upper-$\cC^2$ functions and the descent lemma. The convex hull of a set is denoted by ``$\conv$''.
\begin{proposition}\label{p:deslemma}
	Let $U$ be an open and convex set such that $f:\R^n\to\Rex$ is locally Lipschitz on $U$.  Then, the following assertions are equivalent for a parameter $\kappa\geq 0$:
	\begin{enumerate}[(i)]
	\item\label{it:p25-i} $f$ is $\kappa$-upper-$\cC^2$ on every bounded subset  of $U$;
    \item\label{it:p25-iii} for all $x\in U$ and $\xi \in \conv{\partial f(x)}$, it holds
	\begin{align}\label{equpperdesc}
		f(y) \leq f(x) + \langle \xi , y-x\rangle + \kappa \| y -x\|^2\quad\text{for all } y \in U;
	\end{align}
	\item\label{it:p25-ii} for each $x\in U$, there exits $\xi \in \mathbb{R}^n$ such that \eqref{equpperdesc} holds.
	\end{enumerate}
\end{proposition}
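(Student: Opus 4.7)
My plan is to prove the cyclic implications (i) $\Rightarrow$ (ii) $\Rightarrow$ (iii) $\Rightarrow$ (i), exploiting the DC structure made explicit by the envelope form \eqref{eq:upperC2}. The main obstacle is the first step of (i) $\Rightarrow$ (ii), where I need to transfer information about the regular and basic subdifferentials of the DC function $f$ back to the convex subdifferential of the majorizing function; this requires the directional-derivative characterization of a convex subdifferential together with closedness of its graph.

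For (i) $\Rightarrow$ (ii), fix $x, y \in U$ and take a bounded open convex $V\subseteq U$ containing both. On $V$, the envelope form yields $f(z) = \kappa\|z\|^2 - M(z)$ with $M(z):=\max_{s\in S}\{\langle b(s), z\rangle + c(s)\}$ convex and continuous. The central step is the containment $\conv \partial f(x) \subseteq 2\kappa x - \partial M(x)$. Given $\xi\in\hat\partial f(x)$, substituting $f=\kappa\|\cdot\|^2-M$ into the definition of regular subgradient and taking the quotient along $z = x + td$ as $t\downarrow 0$ gives $M'(x; d) \leq \langle 2\kappa x - \xi, d\rangle$ for every direction $d$, where $M'(x;\cdot)$ denotes the (convex) directional derivative; applying this inequality to $-d$ and combining with the sublinearity relation $M'(x;d) + M'(x;-d) \geq 0$ forces $M'(x;\cdot)$ to be linear, so $M$ is Gateaux differentiable at $x$ with $\nabla M(x) = 2\kappa x - \xi$, and hence $2\kappa x - \xi \in \partial M(x)$. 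Sequential closedness of the graph of $\partial M$ extends the inclusion to the basic subdifferential $\partial f(x)$, and convexity of $\partial M(x)$ extends it to the convex hull. For any $\xi = 2\kappa x - v$ with $v\in\partial M(x)$, combining the subgradient inequality $M(y)\geq M(x)+\langle v, y-x\rangle$ with the identity $\kappa(\|y\|^2 - \|x\|^2) = \langle 2\kappa x, y-x\rangle + \kappa\|y-x\|^2$ immediately produces \eqref{equpperdesc}.

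The implication (ii) $\Rightarrow$ (iii) is immediate since local Lipschitz continuity of $f$ guarantees that $\partial f(x)$, hence $\conv\partial f(x)$, is nonempty. For (iii) $\Rightarrow$ (i), fix a bounded open convex $V\subseteq U$ and let $L$ be a Lipschitz constant of $f$ on $\overline V$. For each $x\in\overline V$ pick $\xi(x)$ satisfying \eqref{equpperdesc}; testing this at $y = x+td$ as $t\downarrow 0$ gives $d^+ f(x;d) \leq \langle \xi(x), d\rangle$, so that $\|\xi(x)\|\leq L$. Set
\begin{equation*}
S := \{(x,\xi) \in \overline V \times L\ball : f(y) \leq f(x) + \langle \xi, y-x\rangle + \kappa\|y-x\|^2 \text{ for all } y\in V\},
\end{equation*}
which is closed (hence compact) in $\overline V \times L\ball$ and contains $(y,\xi(y))$ for every $y\in V$. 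Defining $b(s) := 2\kappa x - \xi$ and $c(s) := \langle \xi, x\rangle - f(x) - \kappa\|x\|^2$ for $s = (x,\xi) \in S$ gives continuous data with $f(y) \leq \kappa\|y\|^2 - \langle b(s), y\rangle - c(s)$ for every $s\in S$ and equality at $s = (y,\xi(y))$. This is exactly the envelope representation \eqref{eq:upperC2} on $V$.
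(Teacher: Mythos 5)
Your proof is correct, and it is genuinely more self-contained than the paper's. The paper establishes the same cycle (i)$\Rightarrow$(ii)$\Rightarrow$(iii)$\Rightarrow$(i) but delegates both nontrivial implications to the cited characterization of lower-$\cC^2$ functions in \cite{MR1363364}, applied to $-f$ together with the identity $\partial(-f)(x)=-\conv\partial f(x)$; it also treats $\kappa=0$ as a separate case. You instead prove everything directly from the envelope form. For (i)$\Rightarrow$(ii), your passage through the convex majorant $M$ --- a regular subgradient of $f$ at $x$ forces $M'(x;\cdot)$ to be linear via the sublinearity relation $M'(x;d)+M'(x;-d)\ge 0$, hence $2\kappa x-\xi\in\partial M(x)$, after which graph-closedness of $\partial M$ handles basic subgradients and convexity of $\partial M(x)$ handles the convex hull --- is precisely the content hidden inside the cited theorems, and the final subgradient inequality for $M$ plus the expansion of $\kappa(\|y\|^2-\|x\|^2)$ recovers \eqref{equpperdesc}. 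For (iii)$\Rightarrow$(i), your compact index set $S\subseteq\overline{V}\times L\ball$ with $b(x,\xi)=2\kappa x-\xi$ and $c(x,\xi)=\langle\xi,x\rangle-f(x)-\kappa\|x\|^2$ is an explicit construction of the representation \eqref{eq:upperC2}, including the a priori bound $\|\xi\|\le L$ obtained from the Dini derivative. What the paper's route buys is brevity; what yours buys is independence from the external reference, an explicit description of the index set $S$, and a uniform treatment of $\kappa=0$. The one caveat, which your argument shares with the paper's statement rather than introducing, is that ``every bounded subset of $U$'' must be read as a bounded open convex $V$ with $\overline{V}\subseteq U$ (or at least with $f$ Lipschitz and continuous on $\overline{V}$), since you use a Lipschitz constant on $\overline{V}$ to bound $\xi$ and continuity of $f$ on $\overline{V}$ to get closedness of $S$ and continuity of $b$ and $c$.
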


\begin{proof}
The case of $\kappa=0$ can be straightforwardly established. Therefore, for the remainder of the proof, we assume $\kappa>0$.
 
(\ref{it:p25-i})$\Rightarrow$(\ref{it:p25-iii}) Let $x,y\in U$ and $\xi\in\conv{\partial f(x)}$. Let $V$ be an open, convex and bounded subset of $U$ that contains both $x$ and $y$. By~\cite[Theorem~5.1(a)$\Rightarrow$(c) and Theorem~5.2]{MR1363364} applied to $-f$ and~$V$, it holds
\begin{equation}\label{eq:Psub}
f(w) \leq f(x) + \langle -\zeta, w-x\rangle + \kappa \|w-x\|^2,\quad\forall w\in V
\end{equation}
for all $\zeta \in\partial(-f)(x)=\hat{\partial}( -f)(x)$. By convexity of the regular subdifferential,
\begin{equation}\label{eq:subinclusions}
\partial (-f)(x)=\conv{ \partial (-f)(x)} =-\conv{\partial f(x)}.
\end{equation}
Since $-\xi\in-\conv{\partial f(x)}=  \partial (-f)(x)$, we conclude that~\eqref{eq:Psub} holds for $\zeta:=-\xi$ and $w:=y\in V$, which implies~\eqref{equpperdesc}.

(\ref{it:p25-iii})$\Rightarrow$(\ref{it:p25-ii}) This is straightforward: $\partial f(x)\neq\emptyset$ since $f$ is locally Lipschitz around $x$ (see, e.g., \cite[Theorem 1.22]{MR3823783}).

(\ref{it:p25-ii})$\Rightarrow$(\ref{it:p25-i}) If $V$ is an open, convex and bounded subset of $U$, then~\eqref{equpperdesc} holds on $V$, so \cite[Theorem~5.1 (b)$\Rightarrow$(a)]{MR1363364} implies that $f$ is $\kappa$-upper-$\cC^2$ on V. This completes the proof.
\end{proof}

We conclude this section with some examples of upper-$\cC^2$ functions, the first of which was considered in \cite[Lemma~5.2]{ferreira2021boosted}. A local variation of this result can be found in~\cite[Lemma~3.6]{aragonartacho2023coderivativebased}.
\begin{example}[Difference of an $L$-smooth function and a convex function]\label{example01}
 Let $f_1:\mathbb{R}^n\to\mathbb{R}$ be a differentiable function whose gradient is $L_{f_1}$-Lipschitz continuous and let $f_2:\mathbb{R}^n\to\mathbb{R}$ be a convex and l.s.c. function. Then, the function $f:= f_1-f_2$ is $L_{f_1}/2$-upper-$\cC^2$ on every bounded set of $\mathbb{R}^n$. Indeed, observe first that~\eqref{eq:descentlemmaineq} holds for $f_1$. Moreover, for any $v\in\partial f_2(x)$, by definition of the convex subdifferential, we have
 \begin{equation*}
  -f_2(y) \leq -f_2(x) + \langle -v,y-x\rangle, \text{ for all } y\in\mathbb{R}^n.
 \end{equation*}
 Adding together~\eqref{eq:descentlemmaineq} for $f_1$ and the above inequality, we get that
 \begin{equation*}
  f(y) \leq f(x) + \langle \nabla f_1(x)-v,y-x\rangle + \frac{L_{f_1}}{2}\|y-x\|^2.
 \end{equation*}
 Hence, the assertion follows by Proposition~\ref{p:deslemma}.
\end{example}

\begin{example}[Squared distance to a nonconvex set]\label{prop:Asplund}
Consider a (possibly nonconvex) nonempty closed set $C \subseteq \mathbb{R}^s$ and a    matrix $Q  \in   \mathbb{R}^{s\times n}$. Then the mapping $x\to  \frac{1}{2}d^2(Qx,C)$ is upper $C^2$ on $\mathbb{R}^n$. Indeed, let us notice that
\begin{align}\label{rep_01}
\frac{1}{2}	d^2(Qx,C) =\frac{1}{2} \| Qx\|^2 - \Asp{C}(Qx),
\end{align}
where $\Asp{C}$ is the \emph{Asplund function} associated to the set $C$ given by
\begin{align*}
	\Asp{C}(w):=\sup_{ y \in C} \left\{\langle y, w\rangle - \frac{1}{2} \| y\|^2\right\}=\left(\iota_C+\frac{1}{2}\|\cdot\|^2\right)^*(w).
\end{align*}
By representation \eqref{rep_01} and Example \ref{example01}, we have that $x\mapsto \frac{1}{2}	d^2(Qx,C)$ is $\frac{\|Q\|^2}{2}$-upper-$\cC^2$ on $\mathbb{R}^n$.
\end{example}

\section{The Boosted Double-Proximal Subgradient Algorithm}\label{sect:BDSA}
In this section, we design an algorithm for solving the nonconvex optimization problem~\eqref{eq:P1}. The algorithm utilizes subgradients of $f$, the gradients of~$\Psi_i$, and proximal steps of $g$ and $h_i^*$, for $i=1,\ldots,p$. Additionally, it incorporates a linesearch step, leading us to name it the \emph{Boosted Double-proximal Subgradient Algorithm} (in short BDSA). %  Its pseudo-code is presented in Algorithm~\ref{alg:1}.

Le us define the function $\varphi: \mathbb{R}^n \to \Rex$ as follows:
\begin{equation}\label{Def_varphi}
	\varphi(x) := f(x) +g(x)-\sum_{i =1 }^p h_i(\Psi_i (x)).
\end{equation}

 The following assumptions are made throughout the paper.

\begin{assumption}\label{assump:1}
Let $U\subseteq \mathbb{R}^n$ be an open convex set such that $\dom g \subseteq U$. Suppose that  $\inf_{x\in\mathbb{R}^n} \varphi(x) > -\infty$ and that the functions in~\eqref{eq:P1} satisfy:
 \begin{enumerate}[(i)]
  \item $f:\R^n\to\Rex$ is locally Lipschitz on $U$ and $\kappa$-upper-$\cC^2$ on every bounded subset of $U$;
  \item $g:\mathbb{R}^n\to\Rex$ is proper, l.s.c. and prox-bounded for some ${\gamma}^g >0$;
  \item $h_i:\mathbb{R}^{m_i}\to\mathbb{R}$ are  convex and continuous functions for all $i=1,\ldots,p$; 
  \item $\Psi_i:\mathbb{R}^n\to\mathbb{R}^{m_i}$ are differentiable functions with $L_i$-Lipschitz continuous gradients on $U$ for all $i=1,\ldots,p$.
 \end{enumerate}
\end{assumption}

\begin{remark} Despite the fact that the class of upper-$\cC^2$ functions is large, as demonstrated by the examples presented in Subsection~\ref{subsec:upperC2}, it is worth noting that, in general, one cannot subsume the function $-\sum_{i =1 }^p h_i(\Psi_i (x))$ into the function $f$ in~\eqref{Def_varphi}, since $-h_i(\Psi_i(x))$ may not belong to the class of upper-$\cC^2$ functions (e.g., if $h_i(y)=y^2$ and $\Psi_i(x)=x^2$, then $-h_i(\Psi_i(x))=-x^4$ does not satisfy~\eqref{equpperdesc} on $U=\R$ for any fixed $\kappa\geq0$).
\end{remark}

Instead of directly addressing problem~\eqref{eq:P1}, which consists in the minimization of the function $\varphi$ in~\eqref{Def_varphi}, we consider the primal-dual formulation
\begin{equation}\label{eq:P2}
    \min_{(x,\mathbf{y}) \in\mathbb{R}^n \times \Rnm}\Phi(x,\mathbf{y}), \tag{PD}
\end{equation}
where  $\Phi:\mathbb{R}^n\times\Rnm\to\overline{\mathbb{R}}$ is given by
\begin{equation}\label{Def_PHI}
	\Phi(x,\mathbf{y}) := f(x)+g(x)+\sum_{i=1}^p \left(h_i^*(y_i)-\langle \Psi_i(x),y_i\rangle\right),
\end{equation}
with $\mathbf{y}=(y_1,\ldots,y_p)\in\mathbb{R}^{m_1}\times\cdots\times\mathbb{R}^{m_p}=\Rnm$. It is easy to check that the optimal values of both problems coincide, that is,
\begin{align}\label{Eq_Optvalues}
	\inf_{x\in\mathbb{R}^n} \varphi(x) = \inf_{(x,\mathbf{y}) \in\mathbb{R}^n \times \Rnm}\Phi(x,\mathbf{y}).
\end{align}
 Indeed, by the Fenchel--Moreau theorem,
\begin{equation*}
\begin{aligned}
    \inf_{x\in\mathbb{R}^n} \varphi(x)
    &= \inf_{x\in\mathbb{R}^n} \left\{ f(x)+g(x)-\sum_{i=1}^p h_i(\Psi_i(x)) \right\}\\
    & = \inf_{x\in\mathbb{R}^n} \left\{f(x)+g(x)-\sum_{i=1}^p \sup_{y_i\in\mathbb{R}^{m_i}} \left\{ \langle \Psi_i (x),y_i\rangle -h_i^*(y_i)\right\} \right\}\\
    & = \inf_{x\in\mathbb{R}^n}\inf_{\mathbf{y}\in\Rnm}\left\{f(x)+g(x)+\sum_{i=1}^p  \left( h_i^*(y_i)-\langle \Psi_i(x),y_i\rangle\right)\right\}\\
    &= \inf_{(x,\mathbf{y}) \in\mathbb{R}^n \times \Rnm}\Phi(x,\mathbf{y}).
\end{aligned}
\end{equation*}

By the generalized Fermat rule, a necessary condition for a point $(\bar{x},\bar{\mathbf{y}})\in\R^n\times\R^m$ to be a local minimum of $\Phi$ is  that $0\in\partial \Phi(\bar{x},\bar{\mathbf{y}})$. Observe that we can express $\Phi=\Phi_1+\Phi_2$, with $\Phi_1(x,\mathbf{y}):=f(x)+g(x)+\sum_{i=1}^p h_i^*(y_i)$ and $\Phi_2(x,\mathbf{y}):=\sum_{i=1}^p \langle \Psi_i(x),y_i\rangle$. Since $\Phi_2$ is~$\cC^1$, by the sum rule in~\cite[Proposition~1.30]{MR3823783},
\begin{equation}\label{eq:subdiff_Phi}
\begin{aligned}
\partial \Phi(x,\mathbf{y})&=\partial\Phi_1(x,\mathbf{y})-\nabla \Phi_2(x,\mathbf{y})\\
&=\partial(f+g)(x)\times\left(\bigtimes_{i=1}^p \partial h_i^*(y_i)\right)-\left(\sum_{i=1}^p\nabla \Psi_i(x)y_i,\Psi_1(x),\ldots,\Psi_p(x)\right),
\end{aligned}
\end{equation}
where the second equality holds because $\Phi_1$ has separate variables. Therefore, the necessary condition $0\in\partial \Phi(\bar{x},\bar{\mathbf{y}})$ is equivalent to
\begin{equation}\label{eq:estpoint}
\left\{
    \begin{aligned}
        &\sum_{i=1}^p \nabla\Psi_i(\bar{x}) \bar{y}_i    \in  \partial  (f+ g)(\bar{x}) ,\\
        &\Psi_i( \bar{x}) \in \partial h_i^* (\bar{y}_i), \quad \forall i=1,\ldots,p.
    \end{aligned}
\right.
\end{equation}
By~\cite[Corollary~2.20]{MR3823783}, we have $\partial  (f+ g)(\bar{x}) \subseteq \partial f(\bar x)+\partial g(\bar x)$. Thus, the inclusions~\eqref{eq:estpoint} imply
\begin{equation}\label{eq:critpoint}
\left\{
    \begin{aligned}
        &\sum_{i=1}^p \nabla\Psi_i(\bar{x}) \bar{y}_i    \in  \partial  f(\bar x )+\partial g(\bar{x}) ,\\
        &\bar{y}_i \in \partial h_i (\Psi_i( \bar{x})), \quad \forall i=1,\ldots,p.
    \end{aligned}
\right.
\end{equation}
We refer to a point $(\bar{x},\bar{\mathbf{y}})\in\R^n\times\R^m$ that satisfy~\eqref{eq:critpoint} as a \emph{critical point of~\eqref{eq:P2}}.

On the other hand, given a point $\bar{x}\in\dom{\varphi}$, it is well-known (see, e.g., \cite{MR4228320,correa2023optimality,MR2453095}) that the subdifferential inclusion
\begin{equation}\label{eq:statpoint}
    \partial\left( \sum_{i=1}^p h_i \circ \Psi_i\right)(\bar{x}) \subset \partial (f+g)(\bar{x}).
\end{equation}
constitutes a necessary condition for local optimality of problem~\eqref{eq:P1}. A point verifying~\eqref{eq:statpoint} is called a \emph{stationary point} of~\eqref{eq:P1}. In general, finding stationary points is highly challenging, so it is useful to consider relaxed notions. We say that a point $\bar{x}\in\dom{\varphi}$ is a \emph{critical point} of~\eqref{eq:P1} if there exists $\barbf{y}\in\Rnm$ such that~\eqref{eq:critpoint} holds. By~\cite[Theorem~10.6]{MR1491362} and~\cite[Corollary~2.21]{MR3823783}, it easily follows that every stationary point of~\eqref{eq:P1} is a critical point, but the converse is not true in general.

Therefore, if  $(\bar{x},\bar{\mathbf{y}})\in\mathbb{R}^n\times\Rnm$ is a critical point of~\eqref{eq:P2}, then $\bar{x}$ is a critical point of~\eqref{eq:P1}. Conversely, if $\bar{x}\in\R^n$ is a critical point of~\eqref{eq:P1}, then there exists $ \bar{\mathbf{y}}\in\mathbb{R}^m$ such that $(\bar{x},\bar{\mathbf{y}})$ is a critical point of~\eqref{eq:P2}. The next result establishes further connections between critical points and solutions of the two minimization problems \eqref{eq:P1} and \eqref{eq:P2} presented above.
\begin{proposition}\label{p:solphiL}
Let $(\bar{x},\bar{\mathbf{y}})\in\mathbb{R}^n\times\Rnm$. Then, the following claims hold.
\begin{enumerate}[(i)]
 \item\label{it:psolphiL-3} If $(\bar{x},\bar{\mathbf{y}})$ is a critical point of~\eqref{eq:P2}, then $\Phi(\bar{x},\bar{\mathbf{y}})=\varphi(\bar{x})$.
 \item\label{it:psolphiL-4} If $(\bar{x},\bar{\mathbf{y}})$ is a solution of~\eqref{eq:P2}, then  $\bar{x}$ is a solution of~\eqref{eq:P1}.
  \item\label{it:psolphiL-5} If $\bar{x}$ is a solution of~\eqref{eq:P1},  then there exists $ \bar{\mathbf{y}}\in\mathbb{R}^m$ such that $(\bar{x},\bar{\mathbf{y}})$ is a solution of~\eqref{eq:P2}.
 \end{enumerate}
\end{proposition}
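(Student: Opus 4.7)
\textbf{Proof plan for Proposition~\ref{p:solphiL}.} My approach rests on the Fenchel--Young duality between $h_i$ and $h_i^*$, combined with the fact that $\Phi(x,\cdot)$ can be partially minimized in $\mathbf{y}$ to recover $\varphi(x)$. The key observation that I will use repeatedly is that since each $h_i$ is proper, convex, and continuous on $\R^{m_i}$, the Fenchel--Moreau theorem gives $h_i=h_i^{**}$, and hence
\begin{equation*}
-h_i(\Psi_i(x)) = \inf_{y_i\in\R^{m_i}}\left\{h_i^*(y_i)-\langle \Psi_i(x),y_i\rangle\right\},
\end{equation*}
with the infimum attained precisely at those $y_i\in\partial h_i(\Psi_i(x))$ (Fenchel--Young equality). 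Consequently, for every $x\in\R^n$, $\varphi(x)=\inf_{\mathbf{y}\in\Rnm}\Phi(x,\mathbf{y})$.

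For~(\ref{it:psolphiL-3}), I would unpack the critical-point conditions~\eqref{eq:critpoint}: the inclusions $\bar{y}_i\in\partial h_i(\Psi_i(\bar{x}))$ yield the Fenchel--Young equalities $h_i^*(\bar{y}_i)-\langle \Psi_i(\bar{x}),\bar{y}_i\rangle=-h_i(\Psi_i(\bar{x}))$ for $i=1,\ldots,p$. Substituting into~\eqref{Def_PHI} and comparing with~\eqref{Def_varphi} directly produces $\Phi(\bar{x},\bar{\mathbf{y}})=\varphi(\bar{x})$.

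For~(\ref{it:psolphiL-4}), assume $(\bar{x},\bar{\mathbf{y}})$ minimizes $\Phi$. Using the displayed identity $\varphi(x)=\inf_{\mathbf{y}}\Phi(x,\mathbf{y})$ for arbitrary $x\in\R^n$, I chain the inequalities
\begin{equation*}
\varphi(x)\;=\;\inf_{\mathbf{y}\in\Rnm}\Phi(x,\mathbf{y})\;\geq\;\Phi(\bar{x},\bar{\mathbf{y}})\;\geq\;\inf_{\mathbf{y}\in\Rnm}\Phi(\bar{x},\mathbf{y})\;=\;\varphi(\bar{x}),
\end{equation*}
where the middle inequality uses that $(\bar{x},\bar{\mathbf{y}})$ minimizes $\Phi$ globally. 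This is exactly the statement that $\bar{x}$ minimizes $\varphi$.

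For~(\ref{it:psolphiL-5}), given a minimizer $\bar{x}$ of $\varphi$, the task is to exhibit a suitable $\bar{\mathbf{y}}$. Since each $h_i$ is convex and continuous (hence finite) on all of $\R^{m_i}$, the subdifferential $\partial h_i(\Psi_i(\bar{x}))$ is nonempty, so I can pick any $\bar{y}_i\in\partial h_i(\Psi_i(\bar{x}))$. The Fenchel--Young equality then gives $\Phi(\bar{x},\bar{\mathbf{y}})=\varphi(\bar{x})$, and combining with~\eqref{Eq_Optvalues} yields $\Phi(\bar{x},\bar{\mathbf{y}})=\varphi(\bar{x})=\inf \varphi=\inf\Phi$, so $(\bar{x},\bar{\mathbf{y}})$ solves~\eqref{eq:P2}. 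No step presents a serious obstacle; the only point where one must be mildly careful is in~(\ref{it:psolphiL-5}), where nonemptiness of $\partial h_i(\Psi_i(\bar{x}))$ must be justified from continuity and finiteness of $h_i$ on $\R^{m_i}$ rather than assumed.
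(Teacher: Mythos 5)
Your proof is correct. Parts~(\ref{it:psolphiL-3}) and~(\ref{it:psolphiL-5}) follow the paper's argument almost verbatim: the inclusions $\bar{y}_i\in\partial h_i(\Psi_i(\bar{x}))$ turn the Fenchel--Young inequality into an equality, which identifies $\Phi(\bar{x},\bar{\mathbf{y}})$ with $\varphi(\bar{x})$, and~\eqref{Eq_Optvalues} does the rest. Your only deviation in~(\ref{it:psolphiL-5}) is to justify the existence of $\bar{\mathbf{y}}$ by nonemptiness of the subdifferential of the finite convex functions $h_i$, whereas the paper obtains it from the fact that a solution of~\eqref{eq:P1} is a critical point; both are fine, and yours is slightly more self-contained. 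The genuine difference is in~(\ref{it:psolphiL-4}): the paper first argues that a solution of~\eqref{eq:P2} must be a critical point (which implicitly invokes the Fermat rule and the subdifferential computation of $\partial\Phi$), then applies part~(\ref{it:psolphiL-3}) and~\eqref{Eq_Optvalues}. You instead use the pointwise identity $\varphi(x)=\inf_{\mathbf{y}\in\Rnm}\Phi(x,\mathbf{y})$ — the same partial-minimization computation that underlies~\eqref{Eq_Optvalues} — and chain $\varphi(x)=\inf_{\mathbf{y}}\Phi(x,\mathbf{y})\geq\Phi(\bar{x},\bar{\mathbf{y}})\geq\inf_{\mathbf{y}}\Phi(\bar{x},\mathbf{y})=\varphi(\bar{x})$. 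This bypasses any appeal to criticality or subdifferential calculus for~(\ref{it:psolphiL-4}), making that part purely a convex-duality argument; the paper's route, in exchange, reuses part~(\ref{it:psolphiL-3}) and keeps the three items tightly linked through the notion of critical point that the rest of the paper is built on. One should only note that the pointwise identity must also cover $x\notin\dom g$, where both sides are $+\infty$, but this is immediate.
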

\begin{proof}
\eqref{it:psolphiL-3} Let  $(\bar{x},\bar{\mathbf{y}})=(\bar{x},\bar{y}_1,\ldots,\bar{y}_p)$ be a critical point of~\eqref{eq:P2}. In particular, $y_i\in\partial h_i(\Psi_i(\bar{x}))$  for all $i=1,\ldots,p$, so for these points the Fenchel--Young inequality becomes an equality (see, e.g.~\cite[Proposition~16.10]{bauschke2017}), and we obtain the following expressions
\begin{equation}\label{F_Y_Eq}
    \begin{aligned}
    h_i(\Psi_i(\bar{x})) +h_i^*(\bar{y}_i) = \langle \Psi_i(\bar{x}),\bar{y}_i\rangle, \quad \forall i=1,\ldots,p.
    \end{aligned}
\end{equation}
This yields the equality
\begin{equation*}\label{eq:phiL}
\begin{aligned}
\varphi(\bar{x}) & = f(\bar{x}) + g(\bar{x}) -\sum_{i=1}^p h_i(\Psi_i (\bar{x})) \\
& = f(\bar{x}) + g(\bar{x}) + \sum_{i=1}^p  \left( h_i^*(\bar{y}_i) - \langle \Psi_i(\bar{x}),\bar{y}_i\rangle\right) = \Phi(\bar{x},\bar{y}_1,\ldots,\bar{y}_p).
\end{aligned}
\end{equation*}

\eqref{it:psolphiL-4} If $(\bar{x},\bar{\textbf{y}})$ is a solution of~\eqref{eq:P2}, as argued above, it must be a critical point.
Then, by~\eqref{it:psolphiL-3}, we have that $
\Phi(\bar{x},\bar{\mathbf{y}})=\varphi(\bar{x})
$. Since the optimal values of problems~\eqref{eq:P1} and~\eqref{eq:P2} coincide (recall \eqref{Eq_Optvalues}), the above expression implies that $\bar{x}$ is a solution to~\eqref{eq:P1}.

\eqref{it:psolphiL-5} Finally, let us suppose that  $\bar{x}$ is a solution of~\eqref{eq:P1}. Then, $\bar{x}$ is necessarily a critical point of~\eqref{eq:P1}, and thus there exists $\bar{y}_i \in \partial h_i(\Psi_i(\bar x))$ such that \eqref{F_Y_Eq} holds. Once more, this implies that $\varphi(\bar{x})= \Phi(\bar x, \bar{\mathbf{y}}) $, where $\bar{\mathbf{y}}:=(\bar{y}_1,\ldots,\bar{y}_p )$.  Therefore, using again the fact that  optimal values of problems~\eqref{eq:P1} and~\eqref{eq:P2} coincide, we get that $(\bar x, \bar{\mathbf{y}}) $ is a solution of problem \eqref{eq:P2}.
\end{proof}
Now we present in Algorithm \ref{alg:1} the pseudo-code of the \emph{Boosted Double-proximal Subgradient Algorithm}.
\begin{algorithm}[hb!]
\caption{Boosted Double-proximal Subgradient Algorithm for problem~\eqref{eq:P1}}\label{alg:1}
\begin{algorithmic}[1]
\Require{$(x^0,\mathbf{y}^0) = (x^0,y^0_1,\ldots,y^0_p)\in\mathbb{R}^n\times\Rnm$, $R\geq 0$, $\rho\in{]0,1[}$ and $\alpha\geq 0$. Set $k:=0$.}

\State{Choose $v^{k}\in\partial f(x^k)$.}
\State{Take some positive $\gamma_k<\min\left\{{\gamma}^g,\left(2\kappa + \sum_{i=1 }^p L_i\left\| y_i^k\right\|\right)^{-1}\right\}$ and compute
    \begin{equation}\label{eq:algx}
    \begin{aligned}
        \hat{x}^{k} & \in \prox_{\gamma_k g} \left(x^k+\gamma_k \sum_{i=1}^p \nabla \Psi_i(x^k) y_i^k-\gamma_k v^k\right) .
    \end{aligned}
    \end{equation}
}
\State{For each $i=1,\ldots,p$, take $\mu_i^k>0$ and compute
\begin{equation}\label{eq:algy}
\begin{aligned}
        \hat{y}_i^{k} & = \prox_{\mu^k_i h_i^*}\left(y_i^k+\mu^k_i  \Psi_i(\hat{x}^{k})  \right).
\end{aligned}
\end{equation}
}
\State{Choose any $\overline{\lambda}_k\geq 0$. Set $\lambda_{k}:=\overline{\lambda}_{k}$, $r:=0$ and $(d^{k}, \mathbf{e}^{k}):=(\hat{x}^{k}, \hat{\mathbf{y}}^{k}) -(x^{k},\mathbf{y}^k)$.}
\State{\textbf{if} $(d^{k},\mathbf{e}^{k})=0$ \textbf{then} STOP and return $x^k$.}
\State{\textbf{while} $r<R$ and
		\begin{align}\label{eq:whilecond}
			\Phi\big((\hat{x}^{k},\hat{\mathbf{y}}^{k}) + \lambda_{k}(d^{k}, \mathbf{e}^{k}) \big)  >  		\Phi(\hat{x}^{k}, \hat{\mathbf{y}}^{k})- \alpha \lambda_k^2 \| (d^{k},\mathbf{e}^k)\|^2
		\end{align}%
\textbf{do }{$r:=r+1$ and $\lambda_k:=\rho^r\overline{\lambda}_k$.}}

\State{\textbf{if }$r=R$ \textbf{then} $\lambda_k:=0$.}

	\State{Set $(x^{k+1},\mathbf{y}^{k+1}) :=(\hat{x}^{k},\hat{\mathbf{y}}^k)+ \lambda_{k}(d^{k}, \mathbf{e}^{k})$, $k:=k+1$ and go to Step 1.}
\end{algorithmic}
\end{algorithm}

\begin{remark}\label{r:alg1}
From the definition of  the proximity operator,~\eqref{eq:algx} and~\eqref{eq:algy} imply that the sequences generated by Algorithm~\ref{alg:1} verify
\begin{equation}\label{eq:subdif1}
\begin{aligned}
    &\frac{x^k-\hat{x}^{k}}{\gamma_k} + \sum_{i=1}^p \nabla \Psi_i(x^k) y_i^k -  v^k  \in \partial g(\hat{x}^{k}),
    \\
    &\frac{y_i^k-\hat{y}_i^{k}}{\mu_i^k} + \Psi_i (  \hat{x}^{k})  \in \partial h_i^*(\hat{y}_i^{k}), \quad \forall i= 1,\ldots, p.
\end{aligned}
\end{equation}
In particular, if $(d^k,\mathbf{e}^k)=0$, the above inclusions become~\eqref{eq:critpoint} for $(\bar{x},\barbf{y}):=(x^k,\mathbf{y}^k)$ and hence $\bar{x}$ is a critical point of problem~\eqref{eq:P1}.

Let us also observe that in Algorithm~\ref{alg:1} it is possible to replace $v^k \in \partial f(x^k)$ by $v^k \in \conv{\partial f(x^k)}$. This is justified by Proposition~\ref{p:deslemma}, which shows that such subgradients satisfy the descent inequality~\eqref{equpperdesc}. This straightforward modification in Step~1 of Algorithm~\ref{alg:1} can be advantageous in numerical applications, particularly when the calculus rules for the basic subdifferential only provide an upper estimate rather than an equality. For example, if the objective function $\varphi(x)$ contains a term of the form $-\upsilon(x)$ and $\upsilon:\R^n\to\Rex$ is convex, it is natural to set $f:=-\upsilon$, which is upper-$\cC^2$ by Example~\ref{example01}. Algorithm~\ref{alg:1} requires choosing $v^k \in \partial f(x^k)=\partial\left(-\upsilon(x^k)\right)\subset -\partial\upsilon(x^k)$, while the modification $v^k \in \conv{\partial f(x^k)}=-\partial \upsilon(x^k)$ (see~\eqref{eq:subinclusions}) allows choosing $v^k$ from the possibly larger set $-\partial \upsilon(x^k)$.

On the other hand, according to~\eqref{eq:subdif1}, allowing $v^k \in \conv{\partial f(x^k)}$ in Step~1 entails the weaker notion of criticality
\begin{equation*}
\left\{
    \begin{aligned}
        &\sum_{i=1}^p \nabla\Psi_i(\bar{x}) \bar{y}_i    \in  \conv{\partial  f(\bar x )}+\partial g(\bar{x}) ,\\
        &\bar{y}_i \in \partial h_i (\Psi_i( \bar{x})), \quad \forall i=1,\ldots,p,
    \end{aligned}
\right.
\end{equation*}
when $(d^k,\mathbf{e}^k)=0$ and $(\bar{x},\barbf{y}):=(x^k,\mathbf{y}^k)$. Thus, the price to pay for having more freedom in the choice of $v^k$ is the possibility of having a larger set of non-optimal critical points to which the algorithm might converge. Nonetheless, a more subtle analysis can be performed to derive stronger notions of criticality, see Remark~\ref{Remark_sub} and Section~\ref{sect:numerical2} for a numerical example.

\end{remark}

\begin{remark}[Particular cases of Algorithm~\ref{alg:1}]\label{remark:particularcases}
Different known algorithms can be obtained as particular cases of Algorithm~\ref{alg:1}.
\begin{enumerate}[(i)]
		\item\label{it:particularcases1} Consider the problem of minimizing $\varphi(x):=g_1(x)-g_2(x)$, with $g_1$ and $g_2$ being convex. If we let $f:=-g_2$, $g:=g_1$ and $\Psi_i:=0=:h_i$ for all $i=1,\ldots,p$, then Step~1 of Algorithm~\ref{alg:1} becomes $v^k\in\partial (-g_2)(x^k)$. Since $\partial (-g_2)(x^k)\subseteq -\partial g_2(x^k)$, when $R=0$ one recovers a specific choice for the \emph{Proximal DC Algorithm} of~\cite{sun2003proximal} in which $v^k$ is chosen from a smaller set of subgradients. If $x^{k+1}=x^k=:\bar{x}$, one gets $-v^k\in \partial g_1(\bar{x})\cap\left(-\partial (-g_2)(\bar{x})\right)$. Observe that the more restrictive condition $\partial g_1(\bar{x})\cap\left(-\partial (-g_2)(x)\right)\neq\emptyset$ can serve to discard some points which are not local minima (e.g., if $g_1(x)=x^2$ and $g_2(x)=|x|$, then $\bar{x}:=0$ is a local maximum which satisfies $0\in\nabla g_1(\bar{x})\cap\partial g_2(\bar{x})$, but $0\not\in\nabla g_1(\bar{x})\cap\left(-\partial (-g_2)(\bar{x})\right)$). When $R=\infty$, one obtains the \emph{Boosted Proximal DC Algorithm} introduced in~\cite{alizadeh2022new}, which is a modification of the \emph{Boosted Difference of Convex functions Algorithm} from~\cite{aragon2018accelerating, boostedDCA} that adds a proximal term.
		\item Likewise, if $\varphi(x):=g_1(x)-g_2(x)+g_3(x)$ with $g_1$ being proper and l.s.c. with $\inf_{x\in\R^n}g_1>-\infty$, $g_2$ being convex and $g_3$ being $L$-smooth, letting $f:=-g_2+g_3$ and the rest of the functions as in the previous case with $R=0$, we obtain a specific choice for the \emph{Generalized Proximal Point Algorithm} presented in~\cite{an2017convergence}.
		\item The \emph{Double-Proximal Gradient Algorithm} proposed in~\cite{bot2019doubleprox} is also recovered by Algorithm~\ref{alg:1} in the case in which $f$ is convex and $L$-smooth, $g$ is convex, $R=0$, $p=1$ and $\Psi_1$ is a linear operator.
	\end{enumerate}
\end{remark}

Steps 6-7 of Algorithm~\ref{alg:1} correspond to an optional linesearch step in the direction $(d^{k}, \mathbf{e}^{k})$ with a fixed number $R$ of attempts. On the one hand, note that the computational burden of these steps can be avoided if the user either chooses $R=0$ or $\overline{\lambda}_k=0$, in which case we refer to the resulting algorithm as \emph{Double-proximal Subgradient Algorithm}  (abbr. \emph{DSA}). On the other hand, if $R>0$ and $\overline{\lambda}_k>0$, Step~6 allows to achieve a further decrease of the primal-dual objective function~$\Phi$. Step~7 sets $\lambda_k=0$ when the linesearch was not successful.

For general problems satisfying Assumption~\ref{assump:1} there is no guarantee that the vector $(d^{k},\mathbf{e}^{k})$  defined in Step $4$ of BDSA provides a descent direction for the function $\Phi$. The motivation for the linesearch step comes from the case in which the functions $g$ and $h_i^*$ are differentiable at the points $\hat{x}^k$ and $\hat{y}_i^k$, respectively. In this case, $(d^{k},\mathbf{e}^{k})$ is a descent direction for $\Phi$ at $(\hat{x}^{k},\hat{\mathbf{y}}^{k})$, since the upper Dini directional derivative of $\Phi$ at $(\hat{x}^{k},\hat{\mathbf{y}}^{k})$ in the direction $(d^{k},\mathbf{e}^{k})$ is negative. This fact is proved in the next result.

\begin{proposition}\label{l:descentdirection}
	Suppose that Assumption~\ref{assump:1} holds and consider the sequences generated by Algorithm~\ref{alg:1} for problem~\eqref{eq:P1}. Assume also that
	\begin{enumerate}[(i)]
		\item $g$ is differentiable at $\hat{x}^{k}$,
		\item $h_i^*$ is differentiable at $\hat{y_i}^{k}$ for all $i = 1,\ldots,p$,
		\item\label{it:descentdirection-4} $\gamma_k \in\left] 0, \left(2\kappa +\frac{p}{2}+\sum_{i=1}^p L_i\|y_i^k\|\right)^{-1}\right[$ and $\mu_i^{k} \in{\left]0,2\,\|\nabla\Psi_i(\hat{x}^k)\|^{-2}\right[}$ for all $i=1,\ldots,p$.
	\end{enumerate}
	Then, for all $k\geq0$,
	\begin{equation}\label{eq:direc_deriv}
		\begin{aligned}
			d^+\Phi\big(&(\hat{x}^{k},\hat{\mathbf{y}}^{k}); (d^{k},\mathbf{e}^{k})\big) \leq\\
			&\left(2\kappa +\frac{p}{2}+\sum_{i=1}^p L_i\|y_i^k\| -\frac{1}{\gamma_k}\right) \|d^{k}\|^2
			+ \sum_{i=1}^p\left(\frac{1}{2}\|\nabla\Psi_i(\hat{x}^k)\|^2-\frac{1}{\mu_i^k}\right) \|e_i^{k}\|^2.
		\end{aligned}
	\end{equation}
	Consequently, if $(d^{k},\mathbf{e}^{k})\neq 0$, then for every $\alpha>0$ there is some $\delta_k>0$ such that
	\begin{equation}\label{eq:linesearch_guarantee}
		\Phi\big((\hat{x}^{k},\hat{\mathbf{y}}^{k}) + \lambda(d^{k}, \mathbf{e}^{k}) \big)  \leq\Phi(\hat{x}^{k}, \hat{\mathbf{y}}^{k})- \alpha \lambda^2 \|(d^{k},\mathbf{e}^k)\|^2, \quad\text{for all }\lambda\in{[0,\delta_k]}.
	\end{equation}
\end{proposition}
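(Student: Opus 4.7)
The plan is to bound the upper Dini derivative $d^+\Phi((\hat x^k,\hat{\mathbf{y}}^k);(d^k,\mathbf{e}^k))$ by additively splitting $\Phi$ into $f$, $g$, the separable part $\sum_i h_i^*(y_i)$, and the $\cC^1$ coupling $-\sum_i\langle\Psi_i(x),y_i\rangle$, and using subadditivity of $\limsup$. Under the differentiability hypotheses, the $g$, $h_i^*$, and coupling pieces contribute genuine directional derivatives at $(\hat x^k,\hat{\mathbf{y}}^k)$, equal to $\langle\nabla g(\hat x^k),d^k\rangle$, $\langle\nabla h_i^*(\hat y_i^k),e_i^k\rangle$, and $-\sum_i[\langle\nabla\Psi_i(\hat x^k)\hat y_i^k,d^k\rangle+\langle\Psi_i(\hat x^k),e_i^k\rangle]$, respectively. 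The inclusions in~\eqref{eq:subdif1} then become equalities under differentiability, so I substitute $\nabla g(\hat x^k)=-d^k/\gamma_k+\sum_i\nabla\Psi_i(x^k)y_i^k-v^k$ and $\nabla h_i^*(\hat y_i^k)=-e_i^k/\mu_i^k+\Psi_i(\hat x^k)$; after doing so, the $\langle\Psi_i(\hat x^k),e_i^k\rangle$ contributions cancel against their counterparts from the coupling.

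The delicate step will be controlling $d^+ f(\hat x^k;d^k)$ in terms of the subgradient $v^k\in\partial f(x^k)$, which was chosen at $x^k$ rather than at $\hat x^k$. I plan to exploit the hypomonotonicity implicit in the $\kappa$-upper-$\cC^2$ property: the representation~\eqref{eq:upperC2} forces $\kappa\|\cdot\|^2-f$ to be convex on bounded subsets of $U$, whose subdifferential monotonicity rewrites as $\langle\xi-\eta,x-y\rangle\leq 2\kappa\|x-y\|^2$ for all $\xi\in\conv\partial f(x)$ and $\eta\in\conv\partial f(y)$ (using the identity $\partial(-f)=-\conv\partial f$ established in the proof of Proposition~\ref{p:deslemma}). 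Local Lipschitzness of $f$ on $U$ ensures $\partial f(\hat x^k)\neq\emptyset$; picking any $\hat v^k\in\partial f(\hat x^k)$, Proposition~\ref{p:deslemma} applied at $\hat x^k$ gives $d^+ f(\hat x^k;d^k)\leq\langle\hat v^k,d^k\rangle$, and the hypomonotonicity estimate applied to $(x^k,v^k)$ and $(\hat x^k,\hat v^k)$ upgrades this to $\langle\hat v^k,d^k\rangle\leq\langle v^k,d^k\rangle+2\kappa\|d^k\|^2$. The $\langle v^k,d^k\rangle$ term then cancels its counterpart from $\nabla g(\hat x^k)$.

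What survives is the cross term $\sum_i\langle\nabla\Psi_i(x^k)y_i^k-\nabla\Psi_i(\hat x^k)\hat y_i^k,d^k\rangle$. Substituting $\hat y_i^k=y_i^k+e_i^k$ separates it into $\langle(\nabla\Psi_i(x^k)-\nabla\Psi_i(\hat x^k))y_i^k,d^k\rangle$, bounded by $L_i\|y_i^k\|\|d^k\|^2$ via Cauchy--Schwarz and the Lipschitzness of $\nabla\Psi_i$, and $-\langle\nabla\Psi_i(\hat x^k)e_i^k,d^k\rangle$, handled by Young's inequality $ab\leq\tfrac12 a^2+\tfrac12 b^2$ to yield $\tfrac12\|\nabla\Psi_i(\hat x^k)\|^2\|e_i^k\|^2+\tfrac12\|d^k\|^2$; summing the $\tfrac12\|d^k\|^2$ terms over $i=1,\dots,p$ produces the $\tfrac{p}{2}$ factor in~\eqref{eq:direc_deriv}. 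The consequence~\eqref{eq:linesearch_guarantee} follows immediately: assumption~\eqref{it:descentdirection-4} makes both bracketed coefficients in~\eqref{eq:direc_deriv} strictly negative, hence $d^+\Phi((\hat x^k,\hat{\mathbf{y}}^k);(d^k,\mathbf{e}^k))\leq -c\|(d^k,\mathbf{e}^k)\|^2$ for some $c>0$ whenever $(d^k,\mathbf{e}^k)\neq 0$; the $\limsup$ definition then forces the forward difference quotient to be at most $-\tfrac{c}{2}\|(d^k,\mathbf{e}^k)\|^2$ for $\lambda$ small enough, and shrinking $\lambda$ so that $\tfrac{c}{2}\geq\alpha\lambda$ supplies the desired $\delta_k$.
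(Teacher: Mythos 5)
Your proposal is correct and follows essentially the same route as the paper's proof: the same additive splitting of $\Phi$ and subadditivity of the $\limsup$, the same substitution of the gradient identities coming from~\eqref{eq:subdif1}, the same treatment of the cross term via Cauchy--Schwarz, Lipschitzness of $\nabla\Psi_i$ and Young's inequality, and the same passage from strict negativity of the coefficients to the linesearch guarantee. The only cosmetic difference is that you obtain the key estimate $\langle \hat v^k - v^k, d^k\rangle \leq 2\kappa\|d^k\|^2$ from monotonicity of the subdifferential of the convex function $\kappa\|\cdot\|^2 - f$, whereas the paper derives the identical inequality~\eqref{eq:subdiffbound} by summing two instances of the descent inequality~\eqref{equpperdesc}; these are equivalent arguments.
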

\begin{proof}
	Indeed, for any $\hat{v}^{k}\in\partial f(\hat{x}^{k})$, we get
	{\small
		\begin{equation}\label{eq:direc_deriv0}
			\begin{aligned}
				d^+\Phi\big((\hat{x}^{k},\hat{\mathbf{y}}^{k}); (d^{k},\mathbf{e}^{k})\big) & \leq \limsup_{t\downarrow 0} \frac{f(\hat{x}^{k}+ td^{k}) -f(\hat{x}^{k}) }{t} +  \limsup_{t\downarrow 0} \frac{g(\hat{x}^{k}+ td^{k}) -g(\hat{x}^{k}) }{t} \\
				& \quad + \sum_{i=1}^p \limsup_{t\downarrow 0} \frac{h_i^*(\hat{y}_i^{k}+te_{i}^{k}) - h_i^*(\hat{y}_i^{k})}{t} \\
				& \quad -   \sum_{i=1}^p \liminf_{t\downarrow 0}\frac{ \langle\Psi_i(\hat{x}^{k}+td^{k})-\Psi_i(\hat{x}^{k}),\hat{y}_i^{k}\rangle +t\langle\Psi_i(\hat{x}^{k}+td^{k}),e_i^{k} \rangle}{t} \\
				& \leq \langle \hat{v}^{k}, d^{k}\rangle +  \langle\nabla g(\hat{x}^{k}),d^{k}\rangle + \sum_{i=1}^p \langle\nabla h_i^*(\hat{y}_i^{k}),e_{i}^{k}\rangle \\
				& \quad -   \sum_{i=1}^p \left(\langle\nabla\Psi_i(\hat{x}^k) \hat{y}_i^{k},d^{k}\rangle + \langle  \Psi_i(\hat{x}^{k}), e_i^{k}\rangle \right), \\
			\end{aligned}
	\end{equation} }%end small
	where the second inequality is due to Proposition~\ref{p:deslemma}.
	Now, since $g$ and $h_i^*$ are  assumed to be differentiable at $\hat{x}^{k}$ and  $\hat{y}_i^{k}$, respectively,~\eqref{eq:subdif1} yields
	\begin{equation}\label{eq:diff}
		\begin{aligned}
			&\frac{x^k-\hat{x}^{k}}{\gamma_k} + \sum_{i=1}^p \nabla \Psi_i(x^k) y_i^k -  v^k  = \nabla g(\hat{x}^{k}),
			\\
			&\frac{y_i^k-\hat{y}_i^{k}}{\mu_i^k} + \Psi_i (  \hat{x}^{k})  =\nabla h_i^*(\hat{y}_i^{k}), \quad \forall i= 1,\ldots, p.
		\end{aligned}
	\end{equation}
	On the other hand, again making use of Proposition~\ref{p:deslemma}, we get the following inequality by setting $y:=\hat{x}^{k}$, $x:= x^{k}$ and $\xi:=v^k$ in equation~\eqref{equpperdesc}
	\begin{equation*}
		f(\hat{x}^{k})-f(x^k) - \langle  v^k, \hat{x}^{k} - x^k \rangle \leq \kappa \|\hat{x}^{k}-x^{k}\|^2.
	\end{equation*}
	Likewise, setting $y:=x^k$, $x:=\hat{x}^{k}$ and $\xi:=\hat{v}^{k}$  in~\eqref{equpperdesc} yields
	\begin{equation*}
		f(x^k) - f(\hat{x}^{k}) + \langle \hat{v}^{k},\hat{x}^{k}-x^k\rangle \leq \kappa \|\hat{x}^{k}-x^{k}\|^2.
	\end{equation*}
	Summing together these two equations, we get
	\begin{equation}\label{eq:subdiffbound}
		\langle \hat{v}^{k}-v^{k}, \hat{x}^{k}-x^k \rangle \leq 2 \kappa \|\hat{x}^{k}-x^k\|^2 \quad \text{ for all } v^k\in\partial f(x^k).
	\end{equation}
	Substituting~\eqref{eq:diff} and~\eqref{eq:subdiffbound} in~\eqref{eq:direc_deriv0}, it becomes
	\begin{equation}\label{eq:direc_deriv1}
		\begin{aligned}
			d^+\Phi\big((\hat{x}^{k},\hat{\mathbf{y}}^{k}); (d^{k},\mathbf{e}^{k})\big)  & \leq \langle  \hat{v}^{k}-v^k,d^{k}\rangle  -\frac{1}{\gamma_k} \langle \hat{x}^{k}-x^k,d^{k}\rangle  + \sum_{i=1}^p\langle \nabla \Psi_i(x^k)y_i^k,d^{k}\rangle  \\
			&\quad -\sum_{i=1}^p \frac{1}{\mu_i^k} \langle \hat{y}_i^{k}-y_i^k,e_i^{k}\rangle +\sum_{i=1}^p \langle \Psi_i(\hat{x}^{k}),e_i^{k}\rangle \\
			&\quad - \sum_{i=1}^p \left(\langle\nabla\Psi_i(\hat{x}^k) \hat{y}_i^{k},d^{k}\rangle + \langle  \Psi_i(\hat{x}^{k}), e_i^{k}\rangle \right) \\
			& \leq \left(2\kappa-\frac{1}{\gamma_k}\right)\|d^{k}\|^2 -\sum_{i=1}^p \frac{1}{\mu_i^k} \|e_i^{k}\|^2  \\
			&\quad- \sum_{i=1}^p \langle \nabla\Psi_i(\hat{x}^k) \hat{y}_i^{k}-\nabla \Psi_i(x^k)y_i^k ,d^{k} \rangle.
		\end{aligned}
	\end{equation}
	Finally, using the Cauchy--Schwartz inequality and Young's inequality, the terms in the last summation can be upper bounded as follows:
\begin{equation*}
 \begin{aligned}
		-\langle \nabla\Psi_i(\hat{x}^k) \hat{y}_i^{k}-\nabla \Psi_i(x^k)y_i^k ,d^{k} \rangle&=\langle \nabla\Psi_i(\hat{x}^k) e_i^k ,d^{k} \rangle+\langle (\nabla\Psi_i(\hat{x}^k)- \nabla \Psi_i(x^k))y_i^k ,d^{k} \rangle\\
		&\leq \|\nabla\Psi_i(\hat{x}^k)\| \|e_i^k\|\|d^{k}\| + L_i\|y_i^k\| \|d^k\|^2\\
		&\leq \frac{1}{2}\left(\|\nabla\Psi_i(\hat{x}^k)\|^2\|e_i^k\|^2+\|d^k\|^2\right)+ L_i\|y_i^k\| \|d^k\|^2\\
		&=\frac{1}{2}\|\nabla\Psi_i(\hat{x}^k)\|^2\|e_i^k\|^2+\left(\frac{1}{2}+L_i\|y_i^k\|\right)\|d^k\|^2,
\end{aligned}
\end{equation*}
	for all $i = 1,\ldots,p$. Putting this into~\eqref{eq:direc_deriv1}, we deduce~\eqref{eq:direc_deriv}.
	
	Thanks to assumption~(\ref{it:descentdirection-4}), we have
	$$K:=\min_{i=1,\ldots,p}\left\{\frac{1}{\gamma_k}-2\kappa +\frac{p}{2}-\sum_{i=1}^p L_i\|y_i^k\|, \frac{1}{\mu_i^k}-\frac{1}{2}\|\nabla\Psi_i(\hat{x}^k)\|^2\right\}>0.
	$$
	Thus, if $(d^{k},\mathbf{e}^k)\neq 0$, one has
	$$d^+\Phi\big((\hat{x}^{k},\hat{\mathbf{y}}^{k}); (d^{k},\mathbf{e}^{k})\big) \leq -K\| (d^{k},\mathbf{e}^k)\|^2<-\frac{K}{2}\| (d^{k},\mathbf{e}^k)\|^2,$$
	so there exist $\tau_k>0$ such that
	$$\Phi\big((\hat{x}^{k},\hat{\mathbf{y}}^{k}) + \lambda(d^{k}, \mathbf{e}^{k}) \big)  \leq \Phi(\hat{x}^{k}, \hat{\mathbf{y}}^{k})- \lambda\frac{K}{2} \|(d^{k},\mathbf{e}^k)\|^2, \quad\text{for all }\lambda\in{[0,\tau_k]}.$$
	Then, given any $\alpha>0$, letting $\delta_k:=\min\{K/(2\alpha),\tau_k\}>0$, we have that $-K/2\leq-\lambda\alpha$ for all $\lambda\in[0,\delta_k]$, so we obtain~\eqref{eq:linesearch_guarantee}.
\end{proof}

The differentiability of $h_i^*$ is guaranteed when $h_i$ is strictly convex. Actually, for proper l.s.c. convex functions, \emph{essential strict convexity} is equivalent to \emph{essential smoothness} of the conjugate function, cf.~\cite[Theorem~26.3]{Rockafellar1970}.

Before moving to the convergence analysis in the next subsection, let us explain the rationale behind Algorithm~\ref{alg:1} in the simplest case in which $h_i=\Psi_i=0$. Thanks to the $\kappa$-upper-$\cC^2$ assumption on $f$, since $v^k\in\partial f(x^k)$ (Step~1), it holds
$$f(z)\leq f(x^k)+\langle v^k,z-x^k\rangle+\kappa\|z-x^k\|^2,\quad\forall z\in\R^n.$$
Thus, if $\gamma_k\in{\left]0,\frac{1}{2\kappa}\right[}$ (as in Step~2), one gets
\begin{align*}
\varphi(z)&=f(z)+g(z)\\
&\leq g(z)+f(x^k)+\langle v^k,z-x^k\rangle+\frac{1}{2\gamma_k}\|z-x^k\|^2=:\widetilde{\varphi}_k(z).
\end{align*}%
for all $z\in\R^n$. Therefore, the function $\widetilde{\varphi}_k$ provides an upper bound to $\varphi$, so it makes sense to take
$$\hat{x}^k=\argmin_{z\in\R^n}\widetilde{\varphi}_k(z)= \prox_{\gamma_k g} \left(x^k-\gamma_k v^k\right), $$
which coincides with~\eqref{eq:algx}. Finally, when $g$ is differentiable at $\hat{x}^k$, the linesearch condition~\eqref{eq:whilecond} in Step~6 permits to further reduce the original function $\varphi$.

For illustration, consider the function from~\cite[Example~2.4]{boostedDCA} given by $\varphi(x):=x_1+x_2 -\|x\|_1+\|x\|^2$, for $x\in\R^2$. If we let $f(x):=x_1+x_2 -\|x\|_1$ and $g(x):=\|x\|^2$, then $f$ is $\kappa$-upper-$\cC^2$ for  $\kappa=0$, by Example~\ref{example01}. If we take $x^0:=(0,1)^T$ and $\gamma_0:=1$, we have $v^0:=(2,0)^T\in \partial f(x^0)=\left\{(2,0)^T,(0,0)^T\right\}$ and $\hat{x}^0=\prox_{\gamma_0 g}(x^0-\gamma_0v^0)=\frac{1}{3}(-2,1)^T$, which minimizes the function $\widetilde{\varphi}_0$. In Figure~\ref{fig:example} we represent the sections of $\varphi$ and $\widetilde{\varphi}$ at $\hat{x}^0$ in the direction $d^0=\hat{x}^0-x^0=-\frac{2}{3}(1,1)^T$. Taking for instance $\alpha=0.1$, we can observe how the linesearch step in the direction $d^0$ can help achieving an additional reduction of the objective function. 

\begin{figure}[H]\centering
	\includegraphics[height=.5\textwidth]{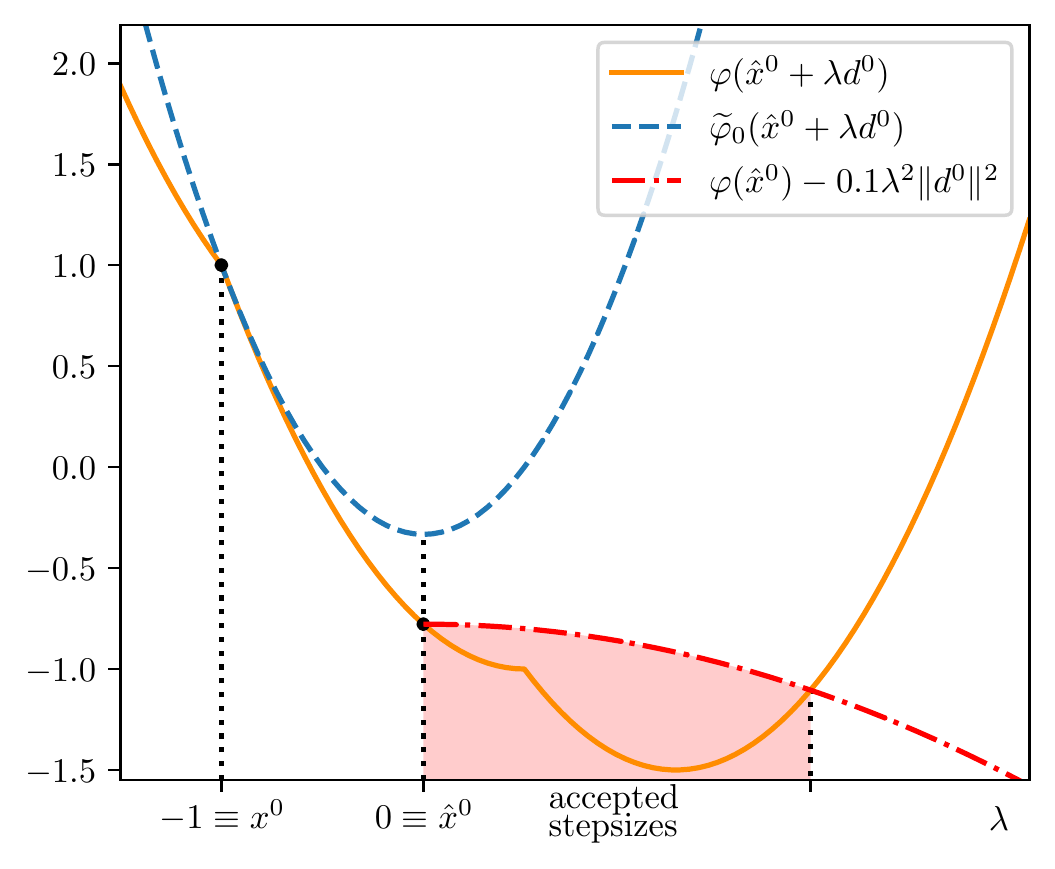}
	\caption{Sections of the functions $\varphi$ and $\widetilde{\varphi}_0$ at $\hat{x}^0$ in the direction $d^0$. The point $x^0$ corresponds with the stepsize $\lambda=-1$.}\label{fig:example}
\end{figure}

\subsection{Convergence Analysis}\label{subsec:convergence}

The following result shows that the primal-dual functional $\Phi$ of problem~\eqref{eq:P2} evaluated at the sequence $( x^k,\mathbf{y}^k)_{k\in\mathbb{N}}$ generated by BDSA decreases after every iteration of the algorithm.
\begin{proposition}\label{p:des}
Let $\Phi$ be the function defined in~\eqref{Def_PHI} and suppose that Assumption~\ref{assump:1} holds. Given a starting point $(x^0,\mathbf{y}^0)=(x^{0},y^{0}_1,\ldots,y^{0}_p)\in\mathbb{R}^n\times\Rnm$, consider the sequence $(x^k,\mathbf{y}^k)_{k\in\mathbb{N}}=(x^k,y^k_1,\ldots,y^k_p)_{k\in\mathbb{N}}$ generated by  Algorithm~\ref{alg:1}. Then, for all $k\geq1$,
\begin{equation}\label{eq:des}
\begin{aligned}
    \Phi(x^{k+1},&\mathbf{y}^{k+1}) - \Phi(x^{k},\mathbf{y}^k
    )
    & \leq -a_k \|x^{k+1}-x^k\|^2    -   \sum_{i=1}^p b_i^k \|y_i^{k+1}-y_i^k\|^2,
\end{aligned}
\end{equation}
where
\begin{equation*}\label{eq:akbk}
a_k:=\frac{2\alpha \lambda_k^2+\gamma_k^{-1}- 2\kappa -\sum_{i=1}^p L_i\| y_i^k\|}{2(1+\lambda_k)^2}>0\quad\text{and}\quad b^k_i:=\frac{1+\alpha \lambda_k^2\mu_i^k}{\mu_i^k(1+\lambda_k)^2}>0,
\end{equation*}
for  $i=1,\ldots,p$.
\end{proposition}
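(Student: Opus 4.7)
The plan is to split $\Phi(x^{k+1},\mathbf{y}^{k+1})-\Phi(x^k,\mathbf{y}^k)$ into the change produced by the proximal sweep $(x^k,\mathbf{y}^k)\mapsto(\hat{x}^k,\hat{\mathbf{y}}^k)$ and the change produced by the linesearch $(\hat{x}^k,\hat{\mathbf{y}}^k)\mapsto(x^{k+1},\mathbf{y}^{k+1})$. For the second piece, the bound $\Phi(x^{k+1},\mathbf{y}^{k+1})-\Phi(\hat{x}^k,\hat{\mathbf{y}}^k)\leq -\alpha\lambda_k^2\|(d^k,\mathbf{e}^k)\|^2$ holds in both branches of Step~7 (from~\eqref{eq:whilecond} when the linesearch succeeds, and trivially when Step~7 sets $\lambda_k=0$). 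I will also record the identity $(x^{k+1},\mathbf{y}^{k+1})-(x^k,\mathbf{y}^k)=(1+\lambda_k)(d^k,\mathbf{e}^k)$, which eventually turns $\|d^k\|^2$ and $\|e_i^k\|^2$ into $\|x^{k+1}-x^k\|^2/(1+\lambda_k)^2$ and $\|y_i^{k+1}-y_i^k\|^2/(1+\lambda_k)^2$.

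For the proximal sweep I would bound each ingredient of $\Phi$ separately. First, applying Proposition~\ref{p:deslemma} to $f$ at $x^k$ with the subgradient $v^k\in\partial f(x^k)$ chosen in Step~1 gives $f(\hat{x}^k)-f(x^k)\leq\langle v^k,d^k\rangle+\kappa\|d^k\|^2$. Second, comparing the proximal objective in~\eqref{eq:algx} at the minimizer $\hat{x}^k$ against the candidate $x^k$ produces
\[
 g(\hat{x}^k)-g(x^k)\leq\Big\langle\sum_{i=1}^p\nabla\Psi_i(x^k)y_i^k-v^k,\,d^k\Big\rangle-\tfrac{1}{2\gamma_k}\|d^k\|^2.
\]
Third, the inclusion $\tfrac{y_i^k-\hat{y}_i^k}{\mu_i^k}+\Psi_i(\hat{x}^k)\in\partial h_i^*(\hat{y}_i^k)$ from~\eqref{eq:subdif1}, combined with the (convex) subgradient inequality for $h_i^*$, yields $h_i^*(\hat{y}_i^k)-h_i^*(y_i^k)\leq\langle\Psi_i(\hat{x}^k),e_i^k\rangle-\tfrac{1}{\mu_i^k}\|e_i^k\|^2$.

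The remaining coupling contribution $-\sum_i\!\bigl[\langle\Psi_i(\hat{x}^k),\hat{y}_i^k\rangle-\langle\Psi_i(x^k),y_i^k\rangle\bigr]$ is handled by inserting $\pm\langle\Psi_i(\hat{x}^k),y_i^k\rangle$ and splitting: the first piece, $-\langle\Psi_i(\hat{x}^k),e_i^k\rangle$, exactly cancels the corresponding term in the bound for $h_i^*$; the second piece is estimated using that $x\mapsto\langle\Psi_i(x),y_i^k\rangle$ has $L_i\|y_i^k\|$-Lipschitz gradient, which by the lower-bound form of the descent lemma gives
\[
 \langle\Psi_i(x^k)-\Psi_i(\hat{x}^k),y_i^k\rangle\leq-\langle\nabla\Psi_i(x^k)y_i^k,d^k\rangle+\tfrac{L_i\|y_i^k\|}{2}\|d^k\|^2.
\]
Adding the four estimates, every linear-in-$d^k$ term cancels: the $\langle v^k,d^k\rangle$ from the $f$-estimate cancels against $-\langle v^k,d^k\rangle$ from the $g$-estimate, and $\langle\sum_i\nabla\Psi_i(x^k)y_i^k,d^k\rangle$ from the $g$-estimate cancels against its negative coming from the coupling. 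What survives is
\[
 \Phi(\hat{x}^k,\hat{\mathbf{y}}^k)-\Phi(x^k,\mathbf{y}^k)\leq-\tfrac{1}{2}\Big(\gamma_k^{-1}-2\kappa-\sum_{i=1}^p L_i\|y_i^k\|\Big)\|d^k\|^2-\sum_{i=1}^p\tfrac{1}{\mu_i^k}\|e_i^k\|^2.
\]

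Adding the linesearch estimate $-\alpha\lambda_k^2(\|d^k\|^2+\sum_i\|e_i^k\|^2)$ and converting to the increments $\|x^{k+1}-x^k\|^2$ and $\|y_i^{k+1}-y_i^k\|^2$ via the factor $(1+\lambda_k)^2$ yields~\eqref{eq:des} with exactly the coefficients $a_k$ and $b_i^k$ in the statement. Positivity of $a_k$ is a direct consequence of the Step~2 requirement $\gamma_k<(2\kappa+\sum_iL_i\|y_i^k\|)^{-1}$, and $b_i^k>0$ is immediate. The main obstacle is organizing the coupling term $\langle\Psi_i(\cdot),y_i\rangle$ so that the linear-in-$d^k$ pieces of the four estimates cancel exactly; doing so requires the lower-bound (rather than the usual upper-bound) form of the descent lemma applied to the smooth composite $x\mapsto\langle\Psi_i(x),y_i^k\rangle$, which is what transfers the $\frac{L_i\|y_i^k\|}{2}$ loss into the coefficient $a_k$.
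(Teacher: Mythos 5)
Your proposal is correct and follows essentially the same route as the paper's proof: the proximal comparison for $g$, the descent lemma applied to $x\mapsto-\langle\Psi_i(x),y_i^k\rangle$, the upper-$\cC^2$ inequality for $f$, and the convex subgradient inequality for $h_i^*$ combine to give exactly the paper's intermediate bound~\eqref{eq:hats}, after which the linesearch estimate and the identity $(x^{k+1},\mathbf{y}^{k+1})-(x^k,\mathbf{y}^k)=(1+\lambda_k)(d^k,\mathbf{e}^k)$ produce the stated coefficients. The only cosmetic difference is that you organize the coupling term by inserting $\pm\langle\Psi_i(\hat{x}^k),y_i^k\rangle$ explicitly, whereas the paper reaches the same cancellation by splitting $\Phi$ into the two one-variable moves $(x^k,\mathbf{y}^k)\to(\hat{x}^k,\mathbf{y}^k)\to(\hat{x}^k,\hat{\mathbf{y}}^k)$.
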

\begin{proof}
	First, note that for $k\geq1$ the vector  $(x^k,\mathbf{y}^k)$  belongs to $\dom \Phi$.	By the definition of the proximal point operator, equation~\eqref{eq:algx} yields the inequality
		{\small
	\begin{equation*}
		g(\hat{x}^{k}) + \frac{1}{2\gamma_k} \left\|\hat{x}^{k}-x^k-\gamma_k \left(\sum_{i=1}^p \nabla\Psi_i(x^k) y_i^k-v^k\right)\right\|^2  \leq g(x^k) + \frac{\gamma_k}{2}\left\|\sum_{i=1}^p \nabla \Psi_i(x^k) y_i^k -v^k\right\|^2.
	\end{equation*}}
	Rearranging this expression and remembering that $d^k=\hat{x}^k-x^k$ we get
	\begin{equation}\label{eq:subg_1}
	 g(\hat{x}^{k})-g(x^k) \leq \left\langle d^k,\sum_{i=1}^p\nabla\Psi_i(x^k) y_i^k-v^k\right\rangle-\frac{1}{2\gamma_k}\|d^k\|^2.
	\end{equation}
Now, let us notice that since the function $x\mapsto  -\langle \Psi_i(x),y_i^k\rangle$ is $\cC^{1}$ with $L_i\| y_i^k\|$-Lipschitz gradient, then  by \eqref{eq:descentlemmaineq} we have
\begin{align*}
	\langle   \Psi_i(x^k)-\Psi_i(\hat{x}^{k}),y_i^{k}\rangle \leq- \langle \nabla\Psi_i(x^k) y_i^k,  d^k   \rangle  +  \frac{L_i\| y_i^k\|}{2} \| d^{k}\|^2.
\end{align*}
	Using this expression and~\eqref{eq:subg_1}, we obtain
	\begin{equation}\label{eq:Phixk}
		\begin{aligned}
			\Phi(\hat{x}^{k},\mathbf{y}^k) - \Phi(x^k,\mathbf{y}^k) =  &   f(\hat{x}^{k})- f(x^{k})  +
			g(\hat{x}^{k})- g(x^k)\\
                &+\sum_{i=1 }^p \langle  \Psi_i(x^k)-\Psi_i(\hat{x}^{k}), y_i^k \rangle   \\
				 \leq  &  f(\hat{x}^{k})- f(x^{k})  +
			g(\hat{x}^{k})- g(x^k)\\& +\sum_{i=1 }^p \left(- \langle \nabla\Psi_i(x^k) y_i^k,  d^k   \rangle  +  \frac{L_i\| y_i^k\|}{2} \| d^{k}\|^2 \right)   \\
			\leq&  f(\hat{x}^{k})- f(x^{k})  -\frac{1}{2\gamma_k}\|d^{k}\|^2  - \langle  v^k, d^{k} \rangle +  \frac{1}{2}\sum_{i=1 }^p L_i\| y_i^k\| \| d^{k}\|^2 \\
		 \leq	& \left( \kappa + \frac{1}{2}\sum_{i=1 }^p L_i\| y_i^k\|    - \frac{1}{2\gamma_k}\right) \|d^k\|^2,
		\end{aligned}
	\end{equation}
	where the last  inequality is due to Proposition~\ref{p:deslemma}. On the other hand, in equation~\eqref{eq:algy} we are computing the proximity operator of the convex  function $h_i^*$, which yields the subgradient inequality
	\begin{equation}\label{eq:subh_2}
		h_i^*(\hat{y}^{k}) + \left\langle \frac{y_i^k-\hat{y}_i^{k}}{\mu_i^k} + \Psi_i(\hat{x}^{k}),y^k_i-\hat{y}_i^{k} \right\rangle \leq h_i^*(y_i^k),   \quad \forall  i=1,\ldots, p.
 	\end{equation}
	Making use of this expression and $\mathbf{e}^k=\hat{\mathbf{y}}^k-\mathbf{y}^k$, we obtain
	\begin{equation}\label{eq:Phiyk}
		\begin{aligned}
			\Phi(\hat{x}^{k},\hat{\mathbf{y}}^{k})-\Phi(\hat{x}^{k},\mathbf{y}^k) & =\sum_{i=1}^p \left( h_i^\ast(\hat{y}_i^{k}) - h_i^*(y^{k})-\langle \Psi_i(\hat{x}^{k}),e_i^{k} \rangle \right)
			 \leq-\sum_{i=1}^p   \frac{1}{\mu_i^k} \|e_i^k\|^2.
		\end{aligned}
	\end{equation}
Then, \eqref{eq:Phiyk} and \eqref{eq:Phixk} give
\begin{equation}\label{eq:hats}
\Phi(\hat{x}^{k},\hat{\mathbf{y}}^{k})\leq \Phi(x^k,\mathbf{y}^k)-\left(\frac{1}{2\gamma_k}- \kappa -\frac{1}{2}\sum_{i=1 }^p L_i\| y_i^k\|\right)\|d^k\|^2 - \sum_{i=1}^p   \frac{1}{\mu_i^k} \|e_i^k\|^2.
\end{equation}
Finally, using the linesearch~\eqref{eq:whilecond} and~\eqref{eq:hats}, we get
\begin{align*}
\Phi(x^{k+1},\textbf{y}^{k+1})&\leq \Phi(\hat{x}^{k}, \hat{\mathbf{y}}^{k})-\alpha \lambda_k^2 \| (d^{k},\mathbf{e}^k)\|^2\\
&\leq \Phi({x}^{k}, {\mathbf{y}}^{k})- \left(\alpha \lambda_k^2+\frac{1}{2\gamma_k}- \kappa -\frac{1}{2}\sum_{i=1 }^p L_i\| y_i^k\|\right) \| d^{k}\|^2 \\
&\quad- \sum_{i=1}^p    \left(\alpha \lambda_k^2+\frac{1}{\mu_i^k}\right)  \| e_i^{k}\|^2\\
&=\Phi({x}^{k}, {\mathbf{y}}^{k})- a_k \| x^{k+1}-x^k\|^2 - \sum_{i=1}^p    b_i^k  \| y_i^{k+1}-y_i^k\|^2,
\end{align*}
where we note that the first inequality trivially holds when the linesearch procedure was not successful, as in that case $\lambda_k=0$ by Step~7. Therefore,~\eqref{eq:des} holds.
\end{proof}
Next, we present the convergence result of Algorithm~\ref{alg:1}.
\begin{theorem}\label{th:conv}
Suppose that Assumption~\ref{assump:1} holds and consider the functions $\varphi$ and $\Phi$ defined in~\eqref{Def_varphi} and~\eqref{Def_PHI}, respectively.   Given $(x^0,\mathbf{y}^0)\in\mathbb{R}^n\times\Rnm$ and $\eta\in{]0,1[}$, consider the pair of sequences $(x^k,\mathbf{y}^k)_{k\in\mathbb{N}}$ generated by Algorithm~\ref{alg:1} with $\gamma_k\in{\left]0,\eta \min{\left\{\gamma^g, \left(2\kappa + \sum_{i=1 }^p L_i\left\| y_i^k\right\|\right)^{-1}\right\}}\right]}$ for all $k\in\N$ and $\sup_{ k \in \N, i=1,\ldots, p} \mu_i^k<+\infty$. Then, either Algorithm~\ref{alg:1} stops at a critical point of~\eqref{eq:P2} after a finite number of iterations or it generates an infinite sequence $(x^k,\mathbf{y}^k)_{k\in\mathbb{N}}$ such that the following assertions hold:
\begin{enumerate}[(i)]
    \item\label{it:th-1} The sequence $\big(\Phi(x^k,\mathbf{y}^k)\big)_{k\in\mathbb{N}}$ monotonically (strictly) decreases and converges. Moreover,  the sequences $(x^k)_{k\in\mathbb{N}}$ and $(\mathbf{y}^k)_{k\in\mathbb{N}}$ verify that
    \begin{equation}\label{eq:Ostrowski}
        \sum_{k=0}^{\infty} \|x^{k+1}-x^k\|^2 < \infty \text{ and } \sum_{k=0}^{\infty} \|\mathbf{y}^{k+1}-\mathbf{y}^k\|^2 < \infty.
    \end{equation}
    \item\label{it:th-2} If the sequence $(x^k,\mathbf{y}^k)_{k\in\mathbb{N}}$ is bounded, the set of its accumulation points is nonempty, closed and connected.
    \item\label{it:th-3} If $(\bar{x},\barbf{y})\in\mathbb{R}^n\times\Rnm$ is an accumulation point of the sequence $(x^k,\mathbf{y}^k)_{k\in\mathbb{N}}$, then there exists $\bar{v}\in\partial f(\bar{x})$ such that~\eqref{eq:critpoint} holds, i.e., $\bar{x}$ is a critical point of~\eqref{eq:P1}. In addition, $\varphi(\bar{x}) = \inf_{k\in\mathbb{N}}\Phi(x^k,y^k)$.
    \item\label{it:th-4} If $(x^k,\mathbf{y}^k)_{k\in\mathbb{N}}$ has at least one isolated accumulation point, then the whole sequence $(x^k,\mathbf{y}^k)_{k\in\mathbb{N}}$ converges to a critical point of~\eqref{eq:P2}. Consequently, $(x^k)_{k\in\mathbb{N}}$ converges to a critical point of problem~\eqref{eq:P1}.
\end{enumerate}
\end{theorem}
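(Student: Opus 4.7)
The plan is to build everything on the descent inequality of Proposition~\ref{p:des}, combined with lower-boundedness of $\Phi$. First I would observe that the Fenchel--Young inequality gives $h_i^*(y_i)-\langle\Psi_i(x),y_i\rangle\geq -h_i(\Psi_i(x))$ for every $y_i$, whence $\Phi(x,\mathbf{y})\geq \varphi(x)\geq \inf\varphi>-\infty$. The stepsize restriction $\gamma_k\leq \eta\min\{\gamma^g,(2\kappa+\sum_i L_i\|y_i^k\|)^{-1}\}$ with $\eta\in{]0,1[}$ yields
$$\gamma_k^{-1}-2\kappa-\textstyle\sum_i L_i\|y_i^k\|=\gamma_k^{-1}\bigl(1-\gamma_k(2\kappa+\sum_i L_i\|y_i^k\|)\bigr)\geq (1-\eta)\gamma_k^{-1}\geq (1-\eta)/(\eta\gamma^g)>0,$$
so $a_k$ is uniformly positive; together with $\sup_{k,i}\mu_i^k<\infty$ this also makes $b_i^k$ uniformly positive, so by Proposition~\ref{p:des} the sequence $(\Phi(x^k,\mathbf{y}^k))_k$ strictly decreases and, being bounded below, converges. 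Telescoping~\eqref{eq:des} and substituting $\|x^{k+1}-x^k\|=(1+\lambda_k)\|d^k\|$ and $\|\mathbf{y}^{k+1}-\mathbf{y}^k\|=(1+\lambda_k)\|\mathbf{e}^k\|$ gives $\sum_k(\|d^k\|^2+\|\mathbf{e}^k\|^2)<\infty$, and then the full summability~\eqref{eq:Ostrowski}, proving~\eqref{it:th-1}. The finite-termination case is already taken care of by Remark~\ref{r:alg1}.

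For assertion~\eqref{it:th-2}, I would invoke the classical Ostrowski-type lemma: a bounded sequence in Euclidean space whose successive differences tend to zero has a nonempty, closed and connected accumulation set. This applies since~\eqref{eq:Ostrowski} forces $\|(x^{k+1},\mathbf{y}^{k+1})-(x^k,\mathbf{y}^k)\|\to 0$.

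For~\eqref{it:th-3}, extract a subsequence $(x^{k_l},\mathbf{y}^{k_l})\to(\bar x,\bar{\mathbf{y}})$. Since $\|d^k\|,\|\mathbf{e}^k\|\to 0$, also $\hat x^{k_l}\to\bar x$ and $\hat{\mathbf{y}}^{k_l}\to\bar{\mathbf{y}}$; boundedness of $(y_i^{k_l})$ keeps $\gamma_{k_l}$ bounded away from zero, so $(x^{k_l}-\hat x^{k_l})/\gamma_{k_l}\to 0$, and local Lipschitzness of $f$ around $\bar x$ lets me extract a further subsequence with $v^{k_l}\to\bar v\in\partial f(\bar x)$ by outer semicontinuity of $\partial f$. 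Passing to the limit in the inclusions~\eqref{eq:subdif1}, using outer semicontinuity of $\partial g$ and $\partial h_i^*$ together with the continuity of $\Psi_i$ and $\nabla\Psi_i$, yields~\eqref{eq:critpoint}, so $\bar x$ is a critical point of~\eqref{eq:P1}. To identify $\varphi(\bar x)$ with $\inf_k\Phi(x^k,\mathbf{y}^k)$, Proposition~\ref{p:solphiL}\eqref{it:psolphiL-3} gives $\Phi(\bar x,\bar{\mathbf{y}})=\varphi(\bar x)$, and a sandwich argument completes the matter: lower semicontinuity of $g$ and $h_i^*$ gives one inequality, while plugging $(\bar x,\bar{\mathbf{y}})$ as comparison point into the proximal inequalities defining $\hat x^{k_l}$ and $\hat y_i^{k_l}$ delivers $\limsup g(\hat x^{k_l})\leq g(\bar x)$ and $\limsup h_i^*(\hat y_i^{k_l})\leq h_i^*(\bar y_i)$, hence $\Phi(\hat x^{k_l},\hat{\mathbf{y}}^{k_l})\to\Phi(\bar x,\bar{\mathbf{y}})$, and the descent together with the linesearch condition transfer this to $\Phi(x^{k_l+1},\mathbf{y}^{k_l+1})$.

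For~\eqref{it:th-4}, I would use the standard fact that a sequence whose successive differences vanish and which possesses an isolated accumulation point must converge to that point. Together with~\eqref{it:th-3}, this forces $(x^k,\mathbf{y}^k)\to(\bar x,\bar{\mathbf{y}})$, a critical point of~\eqref{eq:P2}, hence $(x^k)\to\bar x$, a critical point of~\eqref{eq:P1}. The main technical hurdle I anticipate is the continuity of $\Phi$ along $(\hat x^{k_l},\hat{\mathbf{y}}^{k_l})$ needed in~\eqref{it:th-3}: controlling the nonsmooth, possibly non-convex $g$ through the proximal-inequality trick, and keeping $\gamma_{k_l}$ bounded below along the subsequence, are the delicate steps of the whole argument.
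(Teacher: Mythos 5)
Your proposal follows essentially the same route as the paper: descent via Proposition~\ref{p:des} plus lower-boundedness of $\Phi$ for~(\ref{it:th-1}), the Ostrowski lemma for~(\ref{it:th-2}) and~(\ref{it:th-4}), and a subsequential limit in the inclusions~\eqref{eq:subdif1} combined with the prox-inequality sandwich for~(\ref{it:th-3}). Two steps, however, are not yet airtight as written.

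First, in~(\ref{it:th-1}) you bound only the numerator of $a_k$ and conclude that $a_k$ is ``uniformly positive.'' This theorem does not assume $\sup_k\lambda_k<+\infty$ (that hypothesis appears only later, in Theorem~\ref{t:KL}), so the denominator $2(1+\lambda_k)^2$ may be unbounded and a lower bound on the numerator alone does not give $\inf_k a_k>0$. The fix is the one the paper uses: minimize the ratio $\lambda\mapsto\frac{2\alpha\lambda^2+c}{2(1+\lambda)^2}$ over $\lambda\geq 0$ to get $a_k\geq \frac{c\alpha}{c+2\alpha}$ with $c:=\gamma_k^{-1}-2\kappa-\sum_i L_i\|y_i^k\|$, and then use your numerator bound on $c$; the $2\alpha\lambda_k^2$ term you already wrote down is exactly what makes this work. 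Second, in~(\ref{it:th-3}) you pass to the limit in $\frac{x^{k_l}-\hat x^{k_l}}{\gamma_{k_l}}+\sum_i\nabla\Psi_i(x^{k_l})y_i^{k_l}-v^{k_l}\in\partial g(\hat x^{k_l})$ by invoking ``outer semicontinuity of $\partial g$.'' For a merely l.s.c.\ prox-bounded (nonconvex) $g$, the limiting subdifferential is outer semicontinuous only with respect to $g$-attentive convergence, i.e.\ one also needs $g(\hat x^{k_l})\to g(\bar x)$. You do obtain $\limsup_l g(\hat x^{k_l})\leq g(\bar x)$ from the proximal inequality, which together with lower semicontinuity supplies exactly this, but you present it afterwards, for the value identification; the dependency must be reversed, or you can avoid the issue entirely as the paper does by invoking graph-closedness of $(x,\gamma)\mapsto\prox_{\gamma g}(x)$ directly. (Your parenthetical claim that boundedness of $(y_i^{k_l})$ keeps $\gamma_{k_l}$ bounded away from zero is a non sequitur --- the stepsize rule only caps $\gamma_k$ from above --- but the paper makes the same implicit assumption when it takes $\gamma_{k_j}\to\bar\gamma>0$ without loss of generality, so I do not count this against you.) With these two repairs the argument matches the paper's proof.
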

\begin{proof}
If  Algorithm~\ref{alg:1} stops at some iteration $k+1$ with $x^*=x^{k+1}=x^k$ and $\mathbf{y}^{k+1}=\mathbf{y}^k$, then $x^*$ is a critical point of~\eqref{eq:P1}, as shown in Remark~\ref{r:alg1}.   Otherwise, Algorithm~\ref{alg:1} generates an infinite sequence $(x^k,\mathbf{y}^k)_{k\in\mathbb{N}}$.

(\ref{it:th-1}) Again, observe that  $(x^k,\mathbf{y}^k) \in  \dom \Phi$ for all $k\geq 1$. By   Proposition~\ref{p:des}, summing~\eqref{eq:des} for all $k\geq 1$, we get
\begin{equation}\label{eq:3}
\begin{aligned}
\Phi(x^1,\mathbf{y}^1) - \inf_{ k\in \N }  \Phi(x^k,\mathbf{y}^k)&\geq\sum_{k=1}^{\infty} a_k\|x^{k+1}-x^k\|^2  + \sum_{k=1}^{\infty} \sum_{i=1}^p b_i^k\|\mathbf{y}^{k+1}-\mathbf{y}^k\|^2\\
&\geq C\left(\sum_{k=1}^{\infty} \|x^{k+1}-x^k\|^2  + \sum_{k=1}^{\infty} \|\mathbf{y}^{k+1}-\mathbf{y}^k\|^2 \right),
\end{aligned}
\end{equation}
where $C:= \inf_{k\in\N, i=1,\ldots, p} \left\{ a_k,b_i^k  \right\}$. Let us see that $C>0$. Indeed, minimizing the value of $a_k$ with respect to $\lambda_k$, we deduce
$$a_k=\frac{2\alpha \lambda_k^2+\gamma_k^{-1}- 2\kappa -\sum_{i=1 }^p    L_i\| y_i^k\|}{2(1+\lambda_k)^2}\geq\frac{(\gamma_k^{-1}- 2\kappa-\sum_{i=1 }^p    L_i\| y_i^k\|)\alpha}{\gamma_k^{-1}- 2\kappa-\sum_{i=1 }^p    L_i\| y_i^k\|+2\alpha},$$
whose right-hand side, as a function of $\gamma_k$, is strictly decreasing in $\left]0,\eta / \left(2\kappa + \sum_{i=1 }^p L_i\left\| y_i^k\right\|\right)\right]$. Hence,
$$a_k\geq \frac{(1-\eta)\alpha}{1-\eta+2\alpha\eta/(2\kappa+\sum_{i=1 }^p    L_i\| y_i^k\|)}\geq\frac{(1-\eta)\alpha}{1-\eta+\alpha\eta/\kappa}>0, \quad \forall k\in\N.$$
Likewise,
\begin{align*}
b^k_i=\frac{1+\alpha \lambda_k^2\mu_i^k}{\mu_i^k(1+\lambda_k)^2}\geq\frac{\alpha}{1+\alpha\mu_i^k}\geq\frac{\alpha}{1+\alpha\sup_{k\in\N,i=1,\ldots,p}{\mu_i^k}}>0,\quad \forall k\in\N.
\end{align*}
Therefore, $C>0$ and we obtain from~\eqref{eq:3} that
\begin{equation*}
\begin{aligned}
  \sum_{k=1}^{\infty} \|x^{k+1}-x^k\|^2  + \sum_{k=1}^{\infty} \|\mathbf{y}^{k+1}-\mathbf{y}^k\|^2  \leq C^{-1} \left(  \Phi(x^1,\mathbf{y}^1) - \inf_{ k\in \N }  \Phi(x^k,\mathbf{y}^k) \right).
   \end{aligned}
\end{equation*}
By assumption,  the right-hand side of the  equation is  bounded from above, so the sums in the left-hand side are finite, which proves~\eqref{eq:Ostrowski}.

(\ref{it:th-2})~Equation~\eqref{eq:Ostrowski} implies that the sequences $(x^k)_{k\in\mathbb{N}}$ and $(\mathbf{y}^k)_{k\in\mathbb{N}}$ verify the so-called \emph{Ostrowski condition}, that is,
\begin{equation}\label{eq:Ostrowski2}
 \lim_{k\to\infty} \|x^{k+1}-x^k\| = 0 \text{ and } \lim_{k\to\infty} \|\mathbf{y}^{k+1}-\mathbf{y}^k\| =0.
\end{equation}
Now, the result directly follows from~\cite[Theorem~8.3.9]{MR1955649}.

(\ref{it:th-3}) Let  $(x^{k_j},\mathbf{y}^{k_j})_{j\in\mathbb{N}}$ be a subsequence of  $(x^k,\mathbf{y}^k)_{k\in\mathbb{N}}$ such that $(x^{k_j},\mathbf{y}^{k_j})\to(\bar{x},\barbf{y})$.   By~\eqref{eq:Ostrowski}, since
$$(x^{k_j+1}-x^{k_j},\mathbf{y}^{k_j+1}-\mathbf{y}^{k_j})=(1+\lambda_{k_j})(\hat{x}^{k_j}-x^{k_j},\hat{\mathbf{y}}^{k_j}-\mathbf{y}^{k_j}),$$
the sequence $(\hat{x}^{k_j},\hat{\mathbf{y}}^{k_j})_{k\in\mathbb{N}}$ also converges to $(\bar{x},\barbf{y})$.  Now, we can assume without loss of generality that $\gamma_{k_j} \to \bar{\gamma} \in {]0,\infty[}$.  Consider the subsequence $(v^{k_j})_{j\in\mathbb{N}}$ of $(v^k)_{k\in\mathbb{N}}$. In particular, $v^{k_j}\in\partial f(x^{k_j})$ for all $j\in\mathbb{N}$. Since $f$ is locally Lipschitz continuous, $v^{k_j}$ is bounded for all sufficiently large $j\in\mathbb{N}$ (see, e.g., \cite[Proposition~9.13]{MR1491362}). Therefore, we can also assume without loss of generality that $(v^{k_j})_{j\in\mathbb{N}}$ converges to some point $\bar{v}\in\partial f(\bar{x})$. Thus, the sequence
$$\left(x^{k_j}+\gamma_{k_j}\sum_{i=1}^{m}\nabla\Psi_i(x^{k_j})y_i^{k_j}-\gamma_{k_j} v^{k_j},\hat{x}^{k_j}\right)_{j\in\mathbb{N}} \subseteq \gra \prox_{\gamma_{k_j} g}$$ converges to $(\bar{x}+\bar{\gamma}\sum_{i=1}^p\nabla\Psi_i(\bar{x})\bar{y}_i-\bar{\gamma} \bar{v},\bar{x})$ as $j\to\infty$. Hence, by  \cite[Theorem 1.25]{MR1491362}), we get that
\begin{equation*}
 \bar{x}\in \prox_{\bar \gamma g} \left(\bar{x}+\bar \gamma\sum_{i=1}^p\nabla\Psi_i(\bar{x})\bar{y}_i-\bar{\gamma} \bar{v}\right),
 \end{equation*}
 which implies that
\begin{equation}\label{eq:thst-1}
   \sum_{i=1}^p \nabla\Psi_i(\bar{x}) \bar{y}_i  \in\bar{v} + \partial g(\bar{x}) \subseteq \partial f(\bar{x})+\partial g(\bar{x}).
\end{equation}
Further, using~\eqref{eq:subdif1}, we get
\begin{equation*}\label{eq:subdiff_j}
    \frac{y^{k_j}_i-\hat{y}^{k_j}_i}{\mu_i^{k_j}} +  \Psi_i(\hat{x}^{k_j})\in \partial h_i^*(\hat{y}^{k_j}_i), \quad \forall i=1,\ldots,p.
\end{equation*}
Again, we can assume without loss of generality that   $\mu_i^{k_j} \to \bar{\mu}_i \in {]0,\infty[}$ as $j\to\infty$ for all $ i=1,\ldots,p$. Taking limits as $j\to\infty$,  the closedness of the subdifferential of convex functions results in
\begin{equation}\label{eq:tiii-cp}
 \begin{aligned}
 \Psi_i (\bar{x} )\in \partial h_i^*(\bar{y}_i),& \quad \forall i=1,\ldots,p.
 \end{aligned}
\end{equation}
Therefore,~\eqref{eq:thst-1} and~\eqref{eq:tiii-cp} imply that $\bar{x}$ is a critical point of~\eqref{eq:P1}.

Let us now prove that $\varphi(\bar{x}) = \inf_{k\in\mathbb{N}}\Phi(x^k,y^k)$. 
On the one  hand, due to~\eqref{eq:subdif1}, for all $i=1,\ldots,p$, we have
\begin{equation}\label{eq:2}
   h_i^*(\hat{y}_i^{k_j}) + \left\langle \frac{y_i^{k_j}-\hat{y}_i^{k_j}}{\mu^{k_j}_i}+ \Psi_i (  \hat{x}^{k_j}), \bar{y}_i - \hat{y}_i^{k_j}\right\rangle \leq h_i^*(\bar{y}_i),
\end{equation}
from the definition of the convex subdifferential.
On the other hand, for all $j\in\mathbb{N}$, we have
{
\begin{equation}\label{eq:gnorm}
\begin{aligned}
g(\hat{x}^{k_j}) & +\frac{1}{2\gamma_{k_j}}  \Bigg\| \hat{x}^{k_j}-\left(x^{k_j}+\gamma_{k_j} \sum_{i=1}^p \nabla\Psi_i(x^{k_j}) y_i^{k_j}-  \gamma_{k_j} v^{k_j}\right)\Bigg\|^2 \\
&  \leq  g(\bar{x}) + \frac{1}{2\gamma_{k_j}} \left\| \bar{x}-\left(x^{k_j}+\gamma_{k_j} \sum_{i=1}^p  \nabla\Psi_i(x^{k_j})  y_i^{k_j}-\gamma_{k_j}v^{k_j}\right)\right\|^2,
\end{aligned}
\end{equation}}%
Thus, we deduce from~\eqref{eq:2} and~\eqref{eq:gnorm} that $\limsup_{j\to\infty} \Phi(\hat{x}^{k_j},\hat{\mathbf{y}}^{k_j}) \leq \Phi(\bar{x},\barbf{y})$.

By~\eqref{eq:Ostrowski2}, we have that $(x^{k_j+1},\mathbf{y}^{k_j+1})_{j\in\N}$ also converges to $(\bar{x},\barbf{y})$, so by lower semicontinuity of the functions defining $\Phi$ in~\eqref{Def_PHI}, we have
\begin{align*}
\liminf_{j\to\infty} \Phi(x^{k_j+1},\textbf{y}^{k_j+1})&\geq \Phi(\bar{x},\bar{\mathbf{y}})\\
&\geq\limsup_{j\to\infty}\Phi(\hat{x}^{k_j},\hat{\mathbf{y}}^{k_j})\\
&\geq\limsup_{j\to\infty} \Phi(x^{k_j+1},\mathbf{y}^{k_j+1}),
\end{align*}
where the last inequality is a consequence of the linesearch~\eqref{eq:whilecond}. Therefore, using Proposition~\ref{p:solphiL}\eqref{it:psolphiL-3} and item~\eqref{it:th-1}, we obtain
$$\varphi(\bar{x})=\Phi(\bar{x},\barbf{y})=\lim_{j\to\infty}\Phi(x^{k_j+1},\mathbf{y}^{k_j+1})=\inf_{k\in\mathbb{N}}\Phi(x^{k},\mathbf{y}^k),$$
which proves the claim.

(\ref{it:th-4}) In this case, by~\cite[Proposition~8.3.10]{MR1955649} the sequence $(x^k,\mathbf{y}^k)_{k\in\mathbb{N}}$ converges to some point $(\bar{x},\barbf{y})$, which is a critical point of problem~\eqref{eq:P2} by~(\ref{it:th-3}).
\end{proof}

\begin{remark}[Possible modification of Algorithm \ref{alg:1}]
It is worth mentioning that it would be possible to replace $\hat{x}^{k}$ in~\eqref{eq:algy} by some interpolation between the points $x^k$ and $\hat{x}^{k}$ of the form $(1-\beta^k_i) \hat{x}^{k} + \beta^k_i x^k$, where $\beta^k_i \in \mathbb{R}$ is arbitrarily chosen at each iteration. In principle, this could allow to improve the overall performance of the algorithm. For example, setting $\beta_i = 1$ for all $i=1,\ldots,p$ would permit to fully run the algorithm in parallel, since only $x^{k}$ and $y_i^k$ would be required to compute $\hat{y}_i^{k}$. This would allow to simultaneously compute $\hat{x}^{k}$ and $\hat{y}_i^{k}$, which could improve the algorithm's overall efficiency when the computation of the proximal mappings is time-consuming. However, it is important to note that the bounds obtained in the accordingly modified version of Proposition~\ref{p:des} would be considerably more technical and difficult to be satisfied in practical applications when $\beta_i\neq 0$. As, in addition, in our experiments we did not find numerical evidence of the benefits to justify the inclusion of such extra linear terms, for conciseness we only consider the case where $\beta_i^k=0$.
\end{remark}

\begin{remark}\label{Remark_sub}
As mentioned in Remark~\ref{r:alg1}, one can allow choosing $v^k \in \conv{\partial f(x^k)}$ in Algorithm~\ref{alg:1}. The only modification in Theorem~\ref{th:conv} is that if $(x^{k_j}, \mathbf{y}^{k_j})_{k\in\N}$ converges to
$(\bar{x},\bar{\mathbf{y}})$,  then there exists $\bar{y}_i \in \partial h_i (\Psi_i( \bar{x}))$ for all $ i=1,\ldots,p$, such that
\begin{equation}\label{general_critical_point}
	\sum_{i=1}^p \nabla\Psi_i(\bar{x}) \bar{y}_i    \in \limsup_{j\to +\infty} \left\{v^{k_j}\right\} +\partial g(\bar{x}),
\end{equation}
where here $\limsup$ refers to the Painlev\'e--Kuratowski upper-limit of the sequence $(v^{k_j})_{k\in  \N }$.	Furthermore, it is easy to prove that $\limsup_{j\to +\infty} \left\{v^{k_j}\right\} \subseteq \conv \partial f(\bar x)$.
\end{remark}

\subsection{Convergence under the Kurdyka--{\L}ojasiewicz property}\label{subsec:KL}
In this subsection, we establish the global convergence of Algorithm~\ref{alg:1} and some convergence rates. In addition to the assumptions required by Theorem~\ref{th:conv}, we assume that the primal-dual function $\Phi$ satisfies the Kurdyka--{\L}ojasiewicz property at some accumulation point of the sequence generated by Algorithm~\ref{alg:1}. Recall that the Kurdyka--{\L}ojasiewicz property holds for $\phi:\mathbb{R}^n\to\mathbb{R}$ at $\bar{x}\in\mathbb{R}^n$ if there exists $\beta>0$ and a continuous concave function $ \theta:[0,\beta] \to [0,+\infty[$ such that $\theta(0)=0$, $\theta$ is $\mathcal{C}^1$-smooth on $]0,\beta[$ with a strictly positive derivative $\theta'$ and
\begin{align}\label{Kur-Loj}
	\theta'\big(\phi(x) - \phi(\bar{x})\big)\,d(0,\partial \phi(x))\geq 1
\end{align}
for all $x\in \mathbb{B}_\beta(\bar{x})$ with $\phi(\bar{x}) < \phi(x) <\phi( \bar{x} ) + \beta$, where $d(\cdot,\Omega)$ stands for the distance function to a set $\Omega$.

\begin{lemma}\label{Lemma_desgrad}
Consider a point $(\bar{x},\bar{\mathbf{y}}) \in \mathbb{R}^n\times \Rnm $. In addition to   the assumptions of Theorem {\rm\ref{th:conv}},  suppose that $f$ is $\mathcal{C}^{1,+}$   around $\bar{x}$.  Then, there exists   $r>0$,  $\rho>0$ and $\hat{k}\in\N$ such that, for all  $(x^k, \mathbf{y}^k) \in \mathbb{B}_r(\bar{x},{\bar{\mathbf y}})$ with $k \geq \hat{k}$, there exists $(u^k,\mathbf{w}^k) \in \partial \Phi(\hat{x}^k,\hat{\mathbf{y}}^k)$ verifying
	\begin{align}\label{eq001_Lemma_desgrad}
		\| (u^k,\mathbf{w}^k)\| \leq \rho \| (x^{k+1},\mathbf{y}^{k+1}) - (x^{k},\mathbf{y}^{k})\|.
		\end{align}
\end{lemma}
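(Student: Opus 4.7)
The plan is to construct an explicit element $(u^k,\mathbf{w}^k)$ of $\partial\Phi(\hat{x}^k,\hat{\mathbf{y}}^k)$ from the proximal inclusions in \eqref{eq:subdif1}, and then bound its norm via the Lipschitz properties of $\nabla f$ and $\nabla\Psi_i$ together with the local boundedness of $\mathbf{y}^k$ and $\nabla\Psi_i(\hat{x}^k)$ on the ball $\ball_r(\bar{x},\barbf{y})$.

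First I would invoke the hypothesis that $f$ is $\mathcal{C}^{1,+}$ around $\bar{x}$ to fix a radius $r_0>0$ and a constant $L_f\geq 0$ such that $\nabla f$ exists and is $L_f$-Lipschitz on $\ball_{r_0}(\bar{x})$. From the Ostrowski-type estimate in Theorem~\ref{th:conv}\eqref{it:th-1}--\eqref{it:th-2} we know that $\|d^k\|=\|\hat{x}^k-x^k\|\leq \|x^{k+1}-x^k\|\to 0$, so by taking $r\leq r_0/2$ and $\hat{k}$ large enough I can guarantee that whenever $(x^k,\mathbf{y}^k)\in\ball_r(\bar{x},\barbf{y})$ and $k\geq\hat{k}$, both $x^k$ and $\hat{x}^k$ lie in $\ball_{r_0}(\bar{x})$. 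In particular $\partial f(x^k)=\{\nabla f(x^k)\}$, so necessarily $v^k=\nabla f(x^k)$.

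Next, since $f$ is $\mathcal{C}^1$ on this neighborhood, the sum rule for the basic subdifferential yields $\partial(f+g)(\hat{x}^k)=\nabla f(\hat{x}^k)+\partial g(\hat{x}^k)$. Combining this with the first inclusion of~\eqref{eq:subdif1} produces
\[
a^k:=\nabla f(\hat{x}^k)-\nabla f(x^k)+\frac{x^k-\hat{x}^k}{\gamma_k}+\sum_{i=1}^p\nabla\Psi_i(x^k)y_i^k\in\partial(f+g)(\hat{x}^k),
\]
and together with the second inclusion of~\eqref{eq:subdif1} and the formula~\eqref{eq:subdiff_Phi} for $\partial\Phi$, the vector
\[
u^k:=a^k-\sum_{i=1}^p\nabla\Psi_i(\hat{x}^k)\hat{y}_i^k,\qquad w_i^k:=\frac{y_i^k-\hat{y}_i^k}{\mu_i^k}\quad(i=1,\dots,p),
\]
belongs to $\partial\Phi(\hat{x}^k,\hat{\mathbf{y}}^k)$. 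To bound its norm, I would use $\|\nabla f(\hat{x}^k)-\nabla f(x^k)\|\leq L_f\|d^k\|$ together with the splitting
\[
\|\nabla\Psi_i(x^k)y_i^k-\nabla\Psi_i(\hat{x}^k)\hat{y}_i^k\|\leq L_i\|y_i^k\|\,\|d^k\|+\|\nabla\Psi_i(\hat{x}^k)\|\,\|e_i^k\|,
\]
noting that $\|y_i^k\|$ and $\|\nabla\Psi_i(\hat{x}^k)\|$ are uniformly bounded for $k\geq\hat{k}$ by continuity and by the confinement of $(x^k,\mathbf{y}^k)$ to $\ball_r(\bar{x},\barbf{y})$. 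Finally, since $(x^{k+1},\mathbf{y}^{k+1})=(x^k,\mathbf{y}^k)+(1+\lambda_k)(d^k,\mathbf{e}^k)$ with $\lambda_k\geq 0$, one has $\|d^k\|\leq\|x^{k+1}-x^k\|$ and $\|e_i^k\|\leq\|y_i^{k+1}-y_i^k\|$, which collects all estimates into~\eqref{eq001_Lemma_desgrad} for a suitable $\rho>0$.

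The main obstacle is the coefficient $1/\gamma_k$ appearing in $u^k$ (and the $1/\mu_i^k$ in $w_i^k$): to turn the bound into a uniform constant $\rho$ one needs $\inf_k\gamma_k>0$ and $\inf_{k,i}\mu_i^k>0$. The first follows from the choice of $\gamma_k$ at its upper bound $\eta\min\{\gamma^g,(2\kappa+\sum_i L_i\|y_i^k\|)^{-1}\}$, which is bounded below because $\|y_i^k\|\leq\|\bar{y}_i\|+r$ on the ball; the second is the standing assumption $\sup_{k,i}\mu_i^k<\infty$ together with an analogous positive lower bound on $\mu_i^k$ (which is standard in the KL context and presumably part of the global assumptions of this subsection).
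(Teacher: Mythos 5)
Your proposal is correct and follows essentially the same route as the paper's proof: the same explicit element $(u^k,\mathbf{w}^k)$ built from the proximal inclusions~\eqref{eq:subdif1} and the sum-rule formula~\eqref{eq:subdiff_Phi}, the same Lipschitz/local-boundedness estimates, and the same passage from $\|(\hat{x}^k,\hat{\mathbf{y}}^k)-(x^k,\mathbf{y}^k)\|$ to $\|(x^{k+1},\mathbf{y}^{k+1})-(x^k,\mathbf{y}^k)\|$ via $\lambda_k\geq 0$. Your closing remark about needing $\inf_k\gamma_k>0$ and $\inf_{k,i}\mu_i^k>0$ to get a uniform $\rho$ is a fair observation that the paper leaves implicit (it simply takes ``$\rho$ sufficiently large''), and your resolution is reasonable.
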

\begin{proof}
	Let $L_1>0$ and $r>0$ be such that $f$  is continuously differentiable with $L_1$-Lipschitz gradient on $\mathbb{B}_{2r}(\bar{x},\bar{\mathbf{y}})$. Let $\hat{k}\in \N$ be such that $\| (x^{k+1},\mathbf{y}^{k+1}) - (x^{k},\mathbf{y}^{k})\| \leq r$ for all $k\geq \hat{k}$. Now, consider  $(x^k, \mathbf{y}^k) \in \mathbb{B}_r(\bar{x},{\bar{\mathbf y}})$ with $k \geq \hat{k}$. It follows that $(\hat{x}^k,\hat{\mathbf{y}}^k)$ belongs to $\mathbb{B}_{2r}(\bar{x},{\bar{\mathbf y}})$. Using \eqref{eq:subdif1}, we get that
	\begin{equation*}
		\begin{aligned}
			&\frac{x^{k}-\hat{x}^{k}}{\gamma_{k}} + \sum_{i=1}^p \nabla \Psi_i(x^{k}) y_i^{k} -  \nabla f(x^{k})   \in \partial g(\hat{x}^{k}),
			\\
			&\frac{y_i^{k}-\hat{y}_i^{k}}{\mu_i^{k}} + \Psi_i (  \hat{x}^{k})  \in \partial h_i^*(\hat{y}_i^{k}), \quad \forall i=1, \ldots,p.
		\end{aligned}
	\end{equation*}
Let us define $(u^k,\mathbf{w}^k)=(u^k, w_1^k, \ldots,w_p^k)$ by
\begin{align*}
	u^k &:= \frac{x^{k}-\hat{x}^{k}}{\gamma_{k}} +\sum_{i=1}^p \left(  \nabla \Psi_i(x^{k}) y_i^{k} - \nabla \Psi_i(\hat{x}^{k}) \hat{y}_i^{k} \right)+ \nabla f(\hat{x}^{k})-  \nabla f(x^{k}),\\ 
	w_i^k&:= \frac{y_i^{k}-\hat{y}_i^{k}}{\mu_i^{k}}, \quad \forall i=1,\ldots, p.
\end{align*}
It follows from~\eqref{eq:subdiff_Phi} that $(u^k,\mathbf{w}^k) \in \partial \Phi(\hat{x}^k,\hat{\mathbf{y}}^k)$.  Furthermore, the function $(x,\mathbf{y}) \mapsto \sum_{i=1}^p \nabla\Psi_i(x)y_i$  is Lipschitz continuous on $\mathbb{B}_{2r}(\bar{x})$, let us say $L_2$-Lipschitz continuous, so we can make the following estimations
\begin{align*}
	 \| u^k\| &\leq \left(\frac{1}{\gamma_{k}}+L_1\right)\|x^{k}- \hat{x}^{k}\|  + L_2 \|(x^{k}- \hat{x}^{k}, \mathbf{y}^k  - \hat{\mathbf{y}}^{k})\|, \\
	 \|w_i^k\| &\leq  \frac{1}{\mu_i^{k}}\|y_i^{k}-\hat{y}_i^{k}\|,  \quad \forall i=1,\ldots, p.
\end{align*}
Finally, let us notice that  by Step~8 of Algorithm~\ref{alg:1} we have that
\begin{equation}\label{eq:ineq_hat}
\begin{aligned}
\|(x^{k+1},\mathbf{y}^{k+1}) - (x^{k},\mathbf{y}^{k})\|&=(1+\lambda_k)\|( \hat{x}^{k},\hat{\mathbf{y}}^{k})- (x^{k},\mathbf{y}^{k})\|\\
&\geq \|( \hat{x}^{k},\hat{\mathbf{y}}^{k})- (x^{k},- \mathbf{y}^{k})\| ,
\end{aligned}
\end{equation}
which yields~\eqref{eq001_Lemma_desgrad} taking $\rho>0$ sufficiently large.
\end{proof}
\begin{theorem}\label{t:KL}
In addition  the assumptions of Theorem~{\rm\ref{th:conv}}, suppose that the sequence $(x^k, \mathbf{y}^k)_{k\in\N}$ generated by Algorithm~{\rm\ref{alg:1}} has an accumulation point $(\bar{x},\bar{\mathbf{y}})\in\mathbb{R}^n\times\Rnm$ at which the Kurdyka--{\L}ojasiewicz property \eqref{Kur-Loj} holds, assume that $f$ is $\mathcal{C}^{1,+}$  around $\bar{x}$, and $\sup_{k\in\N}\lambda_k<+\infty$. Then $(x^k, \mathbf{y}^k )_{k\in\N}$ converges to $(\bar{x},\bar{\mathbf{y}})$ as $k\to\infty$.
\end{theorem}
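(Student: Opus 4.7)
The proof would follow the now-classical KL-based template developed by Attouch, Bolte and collaborators, adapted to the fact that the subgradient supplied by Lemma~\ref{Lemma_desgrad} is located at the intermediate point $(\hat{x}^k,\hat{\mathbf{y}}^k)$ rather than at the iterate $(x^{k+1},\mathbf{y}^{k+1})$, so the KL inequality must be invoked at $(\hat{x}^k,\hat{\mathbf{y}}^k)$. Set $\Phi^{\star}:=\Phi(\bar{x},\bar{\mathbf{y}})$, $\sigma_k:=\|(x^{k+1},\mathbf{y}^{k+1})-(x^k,\mathbf{y}^k)\|$, and $L:=\sup_k\lambda_k<+\infty$; Step~8 gives $\sigma_k=(1+\lambda_k)\|(\hat{x}^k,\hat{\mathbf{y}}^k)-(x^k,\mathbf{y}^k)\|$. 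If Algorithm~\ref{alg:1} terminates, Step~5 already yields the desired conclusion, so I would assume the sequence is infinite; by Theorem~\ref{th:conv}(i) and~(iii), $\Phi(x^k,\mathbf{y}^k)\downarrow \Phi^{\star}$, and, since a stall $\Phi(x^{k+1},\mathbf{y}^{k+1})=\Phi(x^k,\mathbf{y}^k)$ would trigger the stopping test, $\Phi(x^k,\mathbf{y}^k)>\Phi^{\star}$ for every $k$.

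Next, the proof would assemble three quantitative ingredients. Revisiting \eqref{eq:des} and \eqref{eq:hats}, and using $L<+\infty$ together with the local boundedness of $(\mathbf{y}^k)$ near $\bar{\mathbf{y}}$ imposed by our proximity to the accumulation point, one extracts uniform constants $C_1,C_2>0$ such that
\begin{equation*}
\Phi(x^k,\mathbf{y}^k)-\Phi(x^{k+1},\mathbf{y}^{k+1})\ge C_1\sigma_k^{2}\quad\text{and}\quad \Phi(x^{k+1},\mathbf{y}^{k+1})-\Phi(\hat{x}^{k+1},\hat{\mathbf{y}}^{k+1})\ge C_2\sigma_{k+1}^{2}.
\end{equation*}
Since Step~6 enforces $\Phi(\hat{x}^k,\hat{\mathbf{y}}^k)\ge \Phi(x^{k+1},\mathbf{y}^{k+1})$, the sequence $(\Phi(\hat{x}^k,\hat{\mathbf{y}}^k))_k$ also decreases strictly to $\Phi^{\star}$ and satisfies $\Phi(\hat{x}^k,\hat{\mathbf{y}}^k)-\Phi(\hat{x}^{k+1},\hat{\mathbf{y}}^{k+1})\ge C_2\sigma_{k+1}^{2}$. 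Lastly, Lemma~\ref{Lemma_desgrad} produces $(u^k,\mathbf{w}^k)\in\partial \Phi(\hat{x}^k,\hat{\mathbf{y}}^k)$ with $\|(u^k,\mathbf{w}^k)\|\le \rho\sigma_k$.

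With the KL data $(\beta,\theta)$ at $(\bar{x},\bar{\mathbf{y}})$ fixed, I would choose $\varepsilon\in{]0,\min\{\beta,r\}[}$ (with $r$ as in Lemma~\ref{Lemma_desgrad}) and, using the subsequence converging to $(\bar{x},\bar{\mathbf{y}})$, the Ostrowski property \eqref{eq:Ostrowski2}, and $\Phi(\hat{x}^k,\hat{\mathbf{y}}^k)\downarrow \Phi^{\star}$, pick an index $k_0$ such that $(x^{k_0},\mathbf{y}^{k_0})\in\mathbb{B}_{\varepsilon/2}(\bar{x},\bar{\mathbf{y}})$, $\Phi(\hat{x}^{k_0},\hat{\mathbf{y}}^{k_0})-\Phi^{\star}<\beta$, and $\tfrac{4}{3}\sigma_{k_0}+\tfrac{4\rho}{3C_2}\theta(\Phi(\hat{x}^{k_0},\hat{\mathbf{y}}^{k_0})-\Phi^{\star})<\varepsilon/2$. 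I would then run an induction on $k\ge k_0$ showing that both $(x^k,\mathbf{y}^k)$ and $(\hat{x}^k,\hat{\mathbf{y}}^k)$ lie in $\mathbb{B}_{\varepsilon}(\bar{x},\bar{\mathbf{y}})$: granting this up to some $N$, Lemma~\ref{Lemma_desgrad} applies, and the KL inequality at $(\hat{x}^k,\hat{\mathbf{y}}^k)$ (valid since $0<\Phi(\hat{x}^k,\hat{\mathbf{y}}^k)-\Phi^{\star}<\beta$) together with the subgradient bound yields $\theta'(\Phi(\hat{x}^k,\hat{\mathbf{y}}^k)-\Phi^{\star})\,\rho\sigma_k\ge 1$. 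Concavity of $\theta$ on the decreasing scalar sequence $(\Phi(\hat{x}^k,\hat{\mathbf{y}}^k)-\Phi^{\star})_k$, combined with the descent estimate above, gives
\begin{equation*}
\theta\bigl(\Phi(\hat{x}^k,\hat{\mathbf{y}}^k)-\Phi^{\star}\bigr)-\theta\bigl(\Phi(\hat{x}^{k+1},\hat{\mathbf{y}}^{k+1})-\Phi^{\star}\bigr)\ge \theta'\bigl(\Phi(\hat{x}^k,\hat{\mathbf{y}}^k)-\Phi^{\star}\bigr)\,C_2\sigma_{k+1}^{2}\ge \frac{C_2\sigma_{k+1}^{2}}{\rho\sigma_k},
\end{equation*}
and an AM--GM split $\sigma_{k+1}\le\tfrac14\sigma_k+\tfrac{\rho}{C_2}[\theta(\Phi(\hat{x}^k,\hat{\mathbf{y}}^k)-\Phi^{\star})-\theta(\Phi(\hat{x}^{k+1},\hat{\mathbf{y}}^{k+1})-\Phi^{\star})]$ telescopes to
\begin{equation*}
\sum_{k=k_0}^{N}\sigma_k\le \tfrac{4}{3}\sigma_{k_0}+\tfrac{4\rho}{3C_2}\theta\bigl(\Phi(\hat{x}^{k_0},\hat{\mathbf{y}}^{k_0})-\Phi^{\star}\bigr)<\varepsilon/2,
\end{equation*}
uniformly in $N$, which via the triangle inequality closes the induction.

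The main delicacy is the simultaneous management of the two requirements---iterates confined to $\mathbb{B}_{\varepsilon}(\bar{x},\bar{\mathbf{y}})$ and telescoping bound under $\varepsilon/2$---i.e., the calibration of $\varepsilon$, $\sigma_{k_0}$ and $\theta(\Phi(\hat{x}^{k_0},\hat{\mathbf{y}}^{k_0})-\Phi^{\star})$, because the summability bound itself is what certifies the inductive hypothesis. Once $\sum_k\sigma_k<+\infty$ has been secured for all $N$, $(x^k,\mathbf{y}^k)$ is a Cauchy sequence, and since $(\bar{x},\bar{\mathbf{y}})$ is by hypothesis an accumulation point, the full sequence converges to $(\bar{x},\bar{\mathbf{y}})$.
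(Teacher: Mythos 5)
Your proposal is correct and follows essentially the same route as the paper: localize around the accumulation point, invoke the KL inequality at the intermediate points $(\hat{x}^k,\hat{\mathbf{y}}^k)$ where Lemma~\ref{Lemma_desgrad} supplies the subgradient bound, combine it with the sufficient-decrease estimate \eqref{eq:hats} and the linesearch, and close an induction that keeps the iterates in the ball while making the step lengths summable and hence the sequence Cauchy. The only difference is technical bookkeeping: where you telescope via the standard single-step split $\sqrt{ab}\le a/4+b$, the paper first iterates the inequality $\Delta_{k+1}\le\left(\sigma\,\Delta s_k\,\Delta_k\right)^{1/2}$ into a product bound and then applies a weighted arithmetic--geometric mean inequality with dyadic weights, arriving at the same summability estimate.
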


\begin{proof}
	If  Algorithm~{\rm\ref{alg:1}}  stops after a finite number of iterations, then the results clearly holds. Otherwise, Algorithm~{\rm\ref{alg:1}} produces an infinite sequence $(x^k, \mathbf{y}^k)_{k\in\N}$. Let $r$, $\rho$ and $\hat{k}$ be the constants given by Lemma~\ref{Lemma_desgrad}, let $\beta$ and $\theta$ be the constant and function in the definition of the Kurdyka--{\L}ojasiewicz property,  and let   $c_0:=\inf_{ k\in \N,i =1,\ldots, p}\left\{\frac{(1-\eta)\kappa}{\eta}, \frac{1}{\mu_i^k}\right\} $, $\lambda_\infty:=\sup_{k\in\N}\lambda_k$ and $\sigma:= \rho(1+\lambda_\infty)^2/c_0$. Consider an arbitrary $\epsilon \in{] 0, \min\{ r, \beta/2\}]}$ and pick $k_0 \geq \hat{k}$ large enough such that the following conditions hold:
\begin{itemize}
    \item $\|(x^{k_0}, \mathbf{y}^{k_0}) -(\bar{x},\bar{\mathbf{y}})\| \leq\epsilon/4$,
	\item $\|    (x^{k+1}, \mathbf{y}^{k+1}) -	(x^{k}, \mathbf{y}^{k})\|\leq \epsilon/4$, for all $k \geq k_0$,
	\item $\sigma    \theta( \Phi(x^{k_0}, \mathbf{y}^{k_0}) -  \Phi(\bar{x},\bar{\mathbf{y}}))    \leq\epsilon/4$,
	\item $\Phi(\bar{x},\bar{\mathbf{y}}) < \Phi(\hat{x}^k, \hat{\mathbf{y}}^k) <\Phi(\bar{x},\bar{\mathbf{y}}) + \beta$, for all $k \geq k_0$,
\end{itemize}
where in the last assertion we have used the fact that $(\Phi(\hat{x}^k,\hat{\mathbf{y}}^k))_{k\in\N}$ also converges to $\Phi(\bar{x},\bar{\mathbf{y}})$, since
$$\Phi(x^{k+1},\mathbf{y}^{k+1})\leq\Phi(\hat{x}^k,\hat{\mathbf{y}}^k)\leq\Phi(x^k,\mathbf{y}^k),$$
by the linesearch~\eqref{eq:whilecond} and~\eqref{eq:hats}.

The rest of the proof is split into three claims.

 \noindent\textbf{Claim~1:} \emph{Let $k \geq k_0$ be such that  $(x^{k}, \mathbf{y}^{k}) \in  \mathbb{B}_\varepsilon(\bar{x},\bar{\mathbf{y}})$. Then,  the following estimation holds
 	\begin{align}\label{Inq_Claim1}
 	\Delta_{k+1} \leq  \left(\sigma  \Delta s_k  \Delta_{k}\right)^{\frac{1}{2}}, 
 	\end{align}
 	where $\Delta_k:= \|    (x^{k+1}, \mathbf{y}^{k+1}) -	(x^{k}, \mathbf{y}^{k})\|$, $\Delta s_k:= s_k -s_{k+1}$  and
 $s_k:=  \theta( \Phi(\hat{x}^{k}, \hat{\mathbf{y}}^{k}) -  \Phi(\bar{x},\bar{\mathbf{y}}))$. }%
 Indeed, let $(u^{k},\mathbf{w}^{k}) \in \partial \Phi(\hat{x}^{k}, \hat{\mathbf{y}}^{k})$ be the vector given by Lemma \ref{Lemma_desgrad} (recall that $(x^{k}, \mathbf{y}^{k}) \in  \mathbb{B}_r(\bar{x},\bar{\mathbf{y}})$). Observe that $(\hat{x}^k,\hat{\mathbf{y}}^k)\in\ball_\beta(\bar{x},\bar{\mathbf{y}})$, since by~\eqref{eq:ineq_hat}, it holds
\begin{align*}
\|(\hat{x}^k,\hat{\mathbf{y}}^k)-(\bar{x},\bar{\mathbf{y}})\|&\leq \|(\hat{x}^k,\hat{\mathbf{y}}^k)-(x^k,\mathbf{y}^k)\|+\|(x^k,\mathbf{y}^k)-(\bar{x},\bar{\mathbf{y}})\|\\
&\leq\|(x^{k+1},\mathbf{y}^{k+1}) - (x^{k},\mathbf{y}^{k})\|+\epsilon\leq 2\epsilon\leq \beta
\end{align*}
Then,  by the concavity of $\theta$, the Kurdyka--{\L}ojasiewicz property \eqref{Kur-Loj} applied to $(\hat{x}^k,\hat{\mathbf{y}}^k)$, inequality \eqref{eq:hats}, and the linesearch~\eqref{eq:whilecond}, we have that
 \begin{align*}
 	\Delta s_k \|   (u^k,\mathbf{w}^k)\|  &\geq \theta'( \Phi(\hat{x}^{k}, \hat{\mathbf{y}}^{k}) -  \Phi(\bar{x},\bar{\mathbf{y}}))   ) \left(\Phi(\hat{x}^{k}, \hat{\mathbf{y}}^{k}) - \Phi(\hat{x}^{k+1}, \hat{\mathbf{y}}^{k+1})  \right)\|   (u^k,\mathbf{w}^k)\| \\
 	&\geq \Phi(\hat{x}^{k}, \hat{\mathbf{y}}^{k}) - \Phi(\hat{x}^{k+1}, \hat{\mathbf{y}}^{k+1})\\
 &\geq \Phi(\hat{x}^{k}, \hat{\mathbf{y}}^{k}) - \Phi(x^{k+1},\mathbf{y}^{k+1})\\
 &\quad+\frac{1}{2}\left(\frac{1}{\gamma_{k+1}}- 2\kappa -\sum_{i=1 }^p L_i\| y_i^{k+1}\|\right)\|d^{k+1}\|^2 + \sum_{i=1}^p   \frac{1}{\mu_i^{k+1}} \|e_i^{k+1}\|^2\\
 &\geq\frac{(1-\eta)\kappa}{\eta}\|d^{k+1}\|^2 + \sum_{i=1}^p   \frac{1}{\mu_i^{k+1}} \|e_i^{k+1}\|^2\\
 &\geq c_0\|(\hat{x}^{k+1},\hat{\mathbf{y}}^{k+1})-(x^{k+1},\mathbf{y}^{k+1})\|^2\\
 &= \frac{c_0}{(1+\lambda_{k+1})^2}\Delta_{k+1}^2\geq \frac{c_0}{(1+\lambda_\infty)^2}\Delta_{k+1}^2.
 \end{align*}
 Hence,

 \begin{align*}
 \Delta_{k+1}&\leq (1+\lambda_\infty)\sqrt{   \frac{ \Delta s_k \|   (u^k,\mathbf{w}^k)\|  }{c_0}}  \leq (1+\lambda_\infty)\sqrt{  \frac{\rho}{c_0}   \Delta s_k \Delta_{k}} =  \sqrt {\sigma   \Delta s_k \Delta_{k}}       ,
 \end{align*}
 which proves the claim.

 \noindent\textbf{Claim~2:} \emph{ Let $k\geq k_0$ and assume that $(x^{j}, \mathbf{y}^{j}) \in  \mathbb{B}_\varepsilon(\bar{x},\bar{\mathbf{y}})$, for all $j\in\{k_0,\ldots, k\}$. Then
	\begin{align}\label{Inq_Claim2_ind}
	\Delta_{k+1} \leq \sigma\left(  \sum_{j=0}^{k-k_0} \frac{1}{2^{j+1}}   \Delta s_{k-j} \right)  +   \frac{1}{2^{k+1-k_0}}\Delta_{k_0}.
\end{align}
}
 Using \eqref{Inq_Claim1} inductively  for  $j\in\{k_0,\ldots, k\}$,  we get
 \begin{align*}
 	\Delta_{k+1} &\leq \left( \prod\limits_{j=0}^{k-k_0}\left(  \sigma  \Delta s_{k-j}\right)^{\frac{1}{2^{j+1}}} \right)\Delta_{k_0}^{\frac{1}{2^{k+1-k_0}}}.
 \end{align*}
 Now, let us recall the (generalized) inequality   of arithmetic and geometric means (see, e.g.,~\cite[Proposition~3.14]{Aragon2019}), which states that for any nonnegative numbers $b_0, \ldots, b_{\ell+1}$,
 \begin{align*}
 	\prod_{j=0}^{\ell+1}  b_j^{\nu_j  }  \leq \sum_{ j=0}^{\ell+1}  \nu_j b_j ,\text{ whenever } \nu_j\geq 0,\text{ with } \sum_{j=0}^{\ell+1} \nu_j = 1.
 \end{align*}
 Using this inequality with $b_j:= \sigma  \Delta s_{k-j}$ and $\nu_j := \frac{1}{2^{j+1}}$ for $j=0,\ldots, k-k_0=:\ell$, and $b_{\ell+1} :=\Delta_{k_0}$ and $\nu_{\ell+1}:=\frac{1}{2^{k+1-k_0}}$,  we have that
 \begin{align*}
 	\left( \prod\limits_{j=0}^{k-k_0}\left(  \sigma  \Delta s_{k-j}\right)^{\frac{1}{2^{j+1}}} \right)\Delta_{k_0}^{\frac{1}{2^{k+1-k_0}}} \leq \sigma\left(  \sum_{j=0}^{k-k_0} \frac{1}{2^{j+1}}   \Delta s_{k-j} \right)  +   \frac{1}{2^{k+1-k_0}}\Delta_{k_0}
 \end{align*}
which concludes the proof of \eqref{Inq_Claim2_ind}.

 \noindent\textbf{Claim~3:} \emph{For all $k\geq k_0$,  $(x^{k}, \mathbf{y}^{k}) \in  \mathbb{B}_\varepsilon(\bar{x},\bar{\mathbf{y}})$. Therefore, $(x^{k}, \mathbf{y}^{k})_{k\in \N}$ converges to  $(\bar{x},\bar{\mathbf{y}})$.
 }

We prove by induction that $(x^{k}, \mathbf{y}^{k}) \in  \mathbb{B}_\varepsilon(\bar{x},\bar{\mathbf{y}})$ for all $k\geq k_0$. The assertion clearly holds for $k=k_0$ and $k=k_0+1$, so we can assume that there is $k_1> k_0+1$ such that $(x^{k}, \mathbf{y}^{k}) \in  \mathbb{B}_\varepsilon(\bar{x},\bar{\mathbf{y}})$ for all $k\in\{k_0,\ldots,k_1\}$. Then  \eqref{Inq_Claim2_ind} holds for all $k \in  \{k_0,\ldots, k_1\}$, so we get that
\begingroup\allowdisplaybreaks
\begin{align*}
\sum_{k=k_0}^{k_1}\| (x^{k+1}, \mathbf{y}^{k+1}) - (x^{k}, \mathbf{y}^{k})\|&= \Delta_{k_0}+\sum_{k=k_0}^{k_1-1}\Delta_{k+1}\\
 	& \leq \Delta_{k_0} +  \sum_{k=k_0}^{k_1-1}\left(    \sigma\left(  \sum_{j=0}^{k-k_0} \frac{1}{2^{j+1}}   \Delta s_{k-j} \right)  +   \frac{1}{2^{k+1-k_0}}\Delta_{k_0}     \right) \\
 	&\leq \Delta_{k_0} +\sigma \sum_{k=k_0}^{k_1-1}\left(       \sum_{j=0}^{k-k_0} \frac{1}{2^{j+1}}   \Delta s_{k-j} \right)   + \left(\sum_{k=1}^{\infty}  \frac{1}{2^{j}}    \right) \Delta_{k_0}\\
 &\leq 2\Delta_{k_0}  +\sigma \sum_{k=k_0}^{k_1-1}\left(       \sum_{j=0}^{k-k_0} \frac{1}{2^{j+1}}   \Delta s_{k-j} \right) \\
 &= 2\Delta_{k_0} +\sigma \sum_{j=1}^{k_1-k_0}\frac{1}{2^{j}} \left(       \sum_{k=k_0}^{k_1-j}    \Delta s_{k} \right) \\
 &\leq 2\Delta_{k_0}  +\sigma \sum_{j=1}^{k_1-k_0}\frac{1}{2^{j}}s_{k_0} \leq 2\Delta_{k_0}  +   \sigma s_{k_0}.
\end{align*}
\endgroup
Hence,
\begingroup\allowdisplaybreaks
\begin{align*}
    \| (x^{k_1+1}, \mathbf{y}^{k_1+1})  - (\bar{x},\bar{\mathbf{y}})\|   &\leq 	\| (x^{{k_0}}, \mathbf{y}^{{k_0}})  - (\bar{x},\bar{\mathbf{y}})\| + \sum_{k=k_0}^{k_1}\| (x^{k+1}, \mathbf{y}^{k+1}) - (x^{k}, \mathbf{y}^{k})\|\\
&\leq \frac{\epsilon}{4} + 2\Delta_{k_0}  +   \sigma s_{k_0}\leq\frac{\epsilon}{4}+\frac{2\epsilon}{4}+\frac{\epsilon}{4}=\epsilon,
\end{align*}
\endgroup
which demonstrates the assertion for $k=k_1+1$.

Therefore,
 \begin{equation}\label{eq:sumDeltak}
 \sum_{k=k_0}^{\infty}\| (x^{k+1}, \mathbf{y}^{k+1}) - (x^{k}, \mathbf{y}^{k})\|\leq 2\Delta_{k_0}  +   \sigma s_{k_0},
\end{equation}
which proves that $(x^{k}, \mathbf{y}^{k})$ is a Cauchy sequence, so it converges to  $(\bar{x},\bar{\mathbf{y}})$.
\end{proof}

The next theorem allows to deduce convergence rates of the sequence generated by Algorithm~\ref{alg:1} when the Kurdyka--{\L}ojasiewicz property holds for a specific choice of function~$\theta$.
\begin{theorem}[Convergence rates]
In addition to the assumptions of Theorem~\ref{t:KL}, suppose that the function $\theta$ in the definition of the Kurdyka--{\L}ojasiewicz property is given by $\theta(t) := Mt^{1-\vartheta}$ for some $M>0$ and $0\leq \vartheta <1$. Then, we obtain the following convergence rates:
\begin{enumerate}[(i)]
 \item\label{it:convrates-1} If $\vartheta=0$, then the sequence $(x^k,\mathbf{y}^k)_{k\in\mathbb{N}}$ converges in a finite number of steps to $(\bar{x},\barbf{y})$.
 \item\label{it:convrates-2} If $\vartheta\in{\left]0,\frac{1}{2}\right]}$, then the sequence $(x^k,\mathbf{y}^k)_{k\in\mathbb{N}}$ converges linearly to $(\bar{x},\barbf{y})$.
 \item\label{it:convrates-3} If $\vartheta\in{\left]\frac{1}{2},1\right[}$, then there exists a positive constant $\varrho$  such that for all $k$ large enough
 \begin{equation*}
 \|(x^k,\mathbf{y}^k)-(\bar{x},\barbf{y})\| \leq \varrho k^{-\frac{1-\vartheta}{2\vartheta-1}}.
 \end{equation*}
\end{enumerate}
\end{theorem}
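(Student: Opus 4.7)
The plan is to apply the three estimates established in the proof of Theorem~\ref{t:KL} to the specific choice $\theta(t)=Mt^{1-\vartheta}$. I set $\phi_k:=\Phi(\hat{x}^k,\hat{\mathbf{y}}^k)-\Phi(\bar{x},\barbf{y})$, $\Delta_k:=\|(x^{k+1},\mathbf{y}^{k+1})-(x^k,\mathbf{y}^k)\|$, and the tail $A_k:=\sum_{j\geq k}\Delta_j$; inequality~\eqref{eq:sumDeltak} ensures $A_k<\infty$, and by the triangle inequality $\|(x^k,\mathbf{y}^k)-(\bar{x},\barbf{y})\|\leq A_k$, so any decay rate for $A_k$ transfers to the sequence. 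For $\vartheta>0$, combining the KL inequality at $(\hat{x}^k,\hat{\mathbf{y}}^k)$ with the bound $\|(u^k,\mathbf{w}^k)\|\leq\rho\Delta_k$ from Lemma~\ref{Lemma_desgrad} would give, for all sufficiently large $k$,
\begin{equation*}
\phi_k^{\vartheta}\leq M(1-\vartheta)\rho\,\Delta_k,\qquad\text{hence}\qquad s_k=M\phi_k^{1-\vartheta}\leq C_1\Delta_k^{(1-\vartheta)/\vartheta}
\end{equation*}
for some $C_1>0$. Relocating the base index in Claim~3 of the proof of Theorem~\ref{t:KL} from $k_0$ to an arbitrary sufficiently large $k$ then delivers the master recursion
\begin{equation*}
A_{k+1}\leq \Delta_k+\sigma s_k\leq \Delta_k+\sigma C_1\Delta_k^{(1-\vartheta)/\vartheta},\qquad\text{with}\qquad \Delta_k=A_k-A_{k+1}.
\end{equation*}

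For item~(\ref{it:convrates-1}) with $\vartheta=0$, the derivative $\theta'\equiv M$ is constant, so KL would force $\|(u^k,\mathbf{w}^k)\|\geq 1/M$ whenever $\phi_k>0$, contradicting $\|(u^k,\mathbf{w}^k)\|\leq\rho\Delta_k\to 0$; therefore $\phi_k=0$ for all large $k$, and the descent inequality~\eqref{eq:hats} combined with $\Phi(x^{k+1},\mathbf{y}^{k+1})=\Phi(\bar{x},\barbf{y})$ forces $d^{k+1}=0$ and $\mathbf{e}^{k+1}=0$, triggering the stopping rule in Step~5 of Algorithm~\ref{alg:1}. For item~(\ref{it:convrates-2}) with $\vartheta\in{]0,1/2]}$, the exponent $(1-\vartheta)/\vartheta\geq 1$ lets me absorb $\Delta_k^{(1-\vartheta)/\vartheta}$ into a constant multiple of $\Delta_k$ once $\Delta_k\leq 1$; the master recursion then collapses to $A_{k+1}\leq C_2(A_k-A_{k+1})$, yielding $A_{k+1}\leq\frac{C_2}{1+C_2}A_k$ and thus linear convergence of $A_k$, and therefore of $(x^k,\mathbf{y}^k)$. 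For item~(\ref{it:convrates-3}) with $\vartheta\in{]1/2,1[}$, the exponent $(1-\vartheta)/\vartheta<1$ dominates, giving $A_{k+1}\leq 2\sigma C_1\Delta_k^{(1-\vartheta)/\vartheta}$ and, after raising both sides to the power $\alpha:=\vartheta/(1-\vartheta)>1$,
\begin{equation*}
A_{k+1}^{\alpha}\leq C_3(A_k-A_{k+1}).
\end{equation*}
The classical real-variable lemma on nonnegative sequences satisfying $a_{k+1}^{\alpha}\leq C(a_k-a_{k+1})$ with $\alpha>1$ then delivers $A_k=O\bigl(k^{-1/(\alpha-1)}\bigr)=O\bigl(k^{-(1-\vartheta)/(2\vartheta-1)}\bigr)$, which is the claimed rate.

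The main obstacle I anticipate is ensuring that the master recursion is valid \emph{uniformly} from some index onward. This requires that $(\hat{x}^k,\hat{\mathbf{y}}^k)$ remains in the KL neighbourhood $\mathbb{B}_\beta(\bar{x},\barbf{y})$ with $\Phi(\bar{x},\barbf{y})<\Phi(\hat{x}^k,\hat{\mathbf{y}}^k)<\Phi(\bar{x},\barbf{y})+\beta$, both of which follow by reusing Claim~3 of Theorem~\ref{t:KL} combined with the convergence already proved there; the degenerate case $\phi_k=0$ is then absorbed into the finite-termination argument of item~(\ref{it:convrates-1}). A secondary technical point is the careful index bookkeeping needed in the sublinear regime when invoking the real-variable lemma, which is by now standard.
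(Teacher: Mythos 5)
Your proposal is correct and follows essentially the same route as the paper: the case $\vartheta=0$ is handled by the same contradiction between the KL inequality and $\Delta_k\to 0$, and for $\vartheta>0$ you combine the tail-sum bound \eqref{eq:sumDeltak} with the KL estimate $\phi_k^{\vartheta}\leq M(1-\vartheta)\rho\,\Delta_k$ to obtain exactly the recursion $S_k\leq 2\Delta_k+C\Delta_k^{(1-\vartheta)/\vartheta}$ that the paper derives in \eqref{eq:S_k}. The only cosmetic difference is that you prove the two elementary recursion lemmas (geometric decay of $A_k$, and $a_{k+1}^{\alpha}\leq C(a_k-a_{k+1})$ implying polynomial decay) directly, whereas the paper cites them from \cite[Lemma~1]{aragon2018accelerating}.
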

\begin{proof}
For proving~\eqref{it:convrates-1}, let $\vartheta=0$. By~\eqref{Kur-Loj}, \eqref{eq001_Lemma_desgrad} and Claim~3 from  previous theorem, we have  that
$$
1 \leq M \|(u^k,\mathbf{w}^k)\| \leq \rho \|(x^{k+1},\mathbf{y}^{k+1})-(x^k,\mathbf{y}^k)\|
$$
for all $k$ sufficiently large. Therefore, Theorem~\ref{th:conv} concludes that $(x^k,\mathbf{y}^k)_{k\in\mathbb{N}}$ stops after a finite number of iterations, or otherwise we would enter into contradiction with~\eqref{eq:Ostrowski}.

For the remaining cases, consider the sequence $S_{k}:=\sum_{\ell=k}^{\infty} \|(x^{\ell+1},\mathbf{y}^{\ell+1})-(x^{\ell},\mathbf{y}^{\ell})\|$, which is finite for any $k\geq0$ due to~\eqref{eq:sumDeltak}. The convergence of $(x^{k},\mathbf{y}^{k})_{k\in\mathbb{N}}$ to $(\bar{x},\barbf{y})$ can be studied by means of $S_k$ since $\|(x^{k},\mathbf{y}^{k})-(\bar{x},\barbf{y})\|\leq S_{k}$.

Recall that $\theta^{\prime}(t) = (1-\vartheta)Mt^{-\vartheta}$. Then, for any $k$ large enough  \eqref{eq:sumDeltak} implies
\begin{equation}\label{eq:S_k}
\begin{aligned}
S_{k}& = \sum_{\ell=k}^{\infty} \Delta_{\ell} \leq 2\Delta_{k} + \sigma s_{k} \\
&= 2 \Delta_{k} + \sigma M  \left(\Phi(\hat{x}^k,\hatbf{y}^k)-\Phi(\bar{x},\barbf{y})\right)^{1-\vartheta} \\
&= 2 \Delta_{k} + \frac{\sigma M^{\frac{1}{\vartheta}}(1-\vartheta)^{\frac{1-\vartheta}{\vartheta}}}{\theta^{\prime}\left(\Phi(\hat{x}^k,\hatbf{y}^k)-\Phi(\bar{x},\barbf{y})\right)^{\frac{1-\vartheta}{\vartheta}}} \\
&\leq  2 \Delta_{k} + \sigma M^{\frac{1}{\vartheta}}(1-\vartheta)^{\frac{1-\vartheta}{\vartheta}}\rho^{\frac{1-\vartheta}{\vartheta}} \Delta_{k}^{\frac{1-\vartheta}{\vartheta}},
\end{aligned}
\end{equation}
where the last inequality is due to~\eqref{Kur-Loj} and~\eqref{eq001_Lemma_desgrad}. If $\vartheta\in{\left]0,\frac{1}{2}\right]}$ , the dominant term in the right hand side of the above equation is the first summand. Therefore, there exists $k_1>0$ and $K_1>0$ such that
$$
S_k \leq K_1 \Delta_k, \quad \text{ for all } k\geq k_1.
$$
This implies~\eqref{it:convrates-2} by resorting to~\cite[Lemma~1(ii)]{aragon2018accelerating}. On the other hand, if $\vartheta\in{\left]\frac{1}{2},1\right[}$, the second term in the right hand side of~\eqref{eq:S_k} would be the dominant one. This yields the existence of some $k_2>0$ and $K_2>0$  such that
$$
S_k^{\frac{\vartheta}{1-\vartheta}} \leq K_2 \Delta_k, \quad \text{ for all } k\geq k_2.
$$
Finally, the conclusion of~\eqref{it:convrates-3} similarly follows from~\cite[Lemma~1(iii)]{aragon2018accelerating}.
\end{proof}

\section{Numerical Experiments}\label{sect:numerical}

In this section we present some computational experiments where we evaluate the performance of Algorithm~\ref{alg:1}. We recall that when $R>0$, so the linesearch in Steps~$6$-$7$ is performed, the resulting algorithm is referred as BDSA; otherwise, Algorithm~\ref{alg:1} without linesearch is named as DSA.

The linesearch of BDSA requires the selection of a number of hyperparameters, namely, the initial stepsize $\overline{\lambda}_k$, the backtracking constant~$\rho$ and the number of trials $R$. This may seem to be a drawback, as each particular problem could require of a specific tuning of all these parameters in order to obtain a good performance of the linesearch. Quite the opposite, the next numerical experiments on very different applications evidence that this is not the case: we ran all instances of BDSA with the same choice of parameters specified below and this general tuning was good enough for BDSA to significantly outperform its counterpart DSA with no linesearch.
\paragraph{Parameter tuning for Algorithm~\ref{alg:1} linesearch}
All the linesearches for BDSA in our numerical experiments were performed with the following choice of parameters: $R=2$, $\rho = 0.5$ and $\alpha=0.1$. The initial stepsize $\overline{\lambda}_k$ was chosen according to the self-adaptive trial stepsize scheme presented in Algorithm~\ref{alg:fortracking} with $\lambda_0=2$ and $\delta=2$.

\begin{algorithm}[H]
 \caption{Self-adaptive trial stepsize}\label{alg:fortracking}
 \begin{algorithmic}[1]
\Require{$\delta>1$ and $\lambda_0>0$. Obtain $\lambda_k$ from $\overline{\lambda}_k$ by Steps $4$-$7$ of BDSA (Algorithm~\ref{alg:1}).}
 \If{$r=0$}
 \State{set $\overline{\lambda}_{k+1}:=\delta\overline{\lambda}_k$;}
 \Else
 \State{set $\overline{\lambda}_{k+1}:=\max{\{ \lambda_0, \rho^{r}\,\overline{\lambda}_k\}}$.}
 \EndIf
\end{algorithmic}
\end{algorithm}
The \emph{self-adaptive} trial stepsize given by Algorithm~\ref{alg:fortracking} is based on the one proposed in~\cite{boostedDCA} for the \emph{Boosted Difference of Convex functions Algorithm} (BDCA). We note that a similar adaptive scheme  for the \emph{gradient descend method} was recently introduced in~\cite{truong2021backtracking}. The procedure works as follows.
Algorithm~\ref{alg:fortracking} determines how to choose the starting stepsize $\overline{\lambda}_{k+1}$ of the next iteration of the method. If in the current iteration a decrease of $\Phi$ was achieved in the first attempt of the linesearch (i.e., when $r=0$),  then the starting stepsize for the next iteration of the main algorithm is increased by setting $\overline{\lambda}_{k+1}:=\delta \lambda_k$, with $\delta>1$. Otherwise, $\overline{\lambda}_{k+1}$ is set as the maximum between the default initial stepsize  and the smallest stepsize accepted in the previous iteration, i.e.,  $\overline{\lambda}_{k+1} :=\max{\{ \lambda_0, \rho^{r}\,\overline{\lambda}_k\}}$ (observe that $\lambda_k$ could be zero if the linesearch was not successful).

This section is divided into three subsections, each containing a different application. The purpose of the experiments in the first subsection is twofold. First, to demonstrate how the linesearch from the boosting step can help reaching better critical points. Second, to show that the assignment of the terms of the objective function of~\eqref{eq:P1} to each of the functions $f$, $g$, $h_i$ and $\Psi_i$ has a big impact in the success of the resulting scheme derived from Algorithm~\ref{alg:1}. In Subsection~\ref{sect:numerical2} we consider an application with real-data for clustering cities in a region and show how the linesearch of BDSA helps finding better solutions in considerably less time than DSA (which, in this context, coincides with GPPA~\cite{an2017convergence}). Lastly, Subsection~\ref{sect:numerical3} contains a nonconvex generalization of Heron's problem that can be addressed with BDSA. In this case, BDSA is not a particular instance of any other known algorithm.

All the experiments were ran in a computer of Intel Core i7-12700H   2.30 GHz with 16GB RAM, under Windows 11 (64-bit).

\subsection{Avoiding Non-Optimal Critical Points}\label{subsect:avoidingcriticalpoints}

Theorems~\ref{th:conv} and~\ref{t:KL} prove the convergence of Algorithm~\ref{alg:1} to some critical point of~\eqref{eq:P1}. We recall that being a critical point is a necessary (but not sufficient) condition for local optimality of problem~\eqref{eq:P1}. In~\cite[Example~3.3]{boostedDCA} it was shown how the linesearch performed by the BDCA helps prevent the algorithm from being trapped by critical points which are not local minima. In this subsection we illustrate the same phenomenon by considering different known algorithms that can be obtained as particular cases of Algorithm~\ref{alg:1}. We show that its \emph{boosted} version, with the additional linesearch, outperforms the basic methods in avoiding these non-desirable critical points.

To this aim, we introduce a new family of functions which entails a challenge to this class of methods. These functions have a large number of critical points where the algorithm can easily get stuck, but a unique global minimum. Specifically, for any $q\in \mathbb{N}$, we define the functions $\varphi_{q}:\mathbb{R}^n\to\mathbb{R}$ as
\begin{equation}\label{eq:func_phi_q}
\varphi_{q}(x) := \|x\|^2 - \|x\|_{1} - \sum_{j=1}^q \left( \|x-je\|_{1} + \|x+je\|_{1}\right) - \|x-(q+1)e\|_{1},
\end{equation}
where $e$ is the vector of ones in $\R^n$.
It is a simple exercise to check that the function $\varphi_q$ possesses $(2q+3)^n$ critical points, which are given by the set ${\{-(q+1),-q,\ldots,0,\ldots,q,q+1\}}^n$, and a unique local minimum at  $x^* := (-(q+1),-(q+1),\ldots,-(q+1))^T$, which corresponds to its global optimum, with optimal value $\varphi_q(x^*)=-n(q^2+3q+2)$.

Note that the function $\varphi_q$  admits different representations as an instance of~\eqref{eq:P1}, and different algorithms are derived from BDSA depending on which terms one assigns to each of the functions $f$, $g$ and $h_i$ (recall Remark~\ref{remark:particularcases}):
\begin{itemize}
\item Setting $f(x):= 0$, $g(x) := \| x\|^2$  and $h_i$, for $i= 1,\ldots, 2q+2$, to be the remaining terms involving the $\ell_1$-norm, the \emph{Double-proximal Gradient Algorithm} (DGA) by Banert--Bo\c{t}~\cite{bot2019doubleprox} is obtained.
\item If we take $f(x):= - \|x\|_{1} - \sum_{j=1}^q \left( \|x-je\|_{1} + \|x+je\|_{1}\right) -\|x-(q+1)e\|_{1}$, $g(x) := \| x\|^2$ and $h(x):= 0$, we recover the particular case of the \emph{Proximal DC Algorithm} (PDCA) discussed in Remark~\ref{remark:particularcases}~\eqref{it:particularcases1}, which would become the \emph{Boosted Proximal DC Algorithm} (BPDCA) from~\cite{alizadeh2022new} when $R=\infty$ (but recall that we take $R=2$ in our experiments). Due to variable separability of the $\ell_1$-norm, it can be proved that the subdifferential of  $f$ coincides with the sum of subdifferentials of the $\ell_1$-norm terms. Therefore, for every $k\geq0$, we take $v^k \in \partial f(x^k)$ as a sum of subgradients of the form
\[
v^k = \sum_{s\in I} v^k_{s} \quad  \text{ where }  \quad v^k_{s}\in\partial\left(-\|\cdot-se\|_1\right) (x^k),
\]
where $I:=\{0,1,-1,\ldots,q,-q,q+1\}$ and every subgradient is componentwise chosen as
\begin{equation*}
(v^k_{s})_i = \left\{
\begin{aligned}
1 & \text{ if } x^k_i\leq s, \\
-1 & \text{ if } x^k_i >s,
\end{aligned}
\right. \quad \text{ for all  }i=1,\ldots,n.
\end{equation*}
\end{itemize}

For different combinations of $n$ and $q$, we performed $10\,000$ runs of DGA, PDCA, and their boosted counterparts with linesearch (abbr. as BDGA and BPDCA) all initialized at the same starting points randomly chosen in the interval $[-q-2,q+2]^{n}$, and $[-1,1]^n$ for the dual variables (when necessary), with $\mu=\gamma=1$. We note that the conjugate of the $\ell_1$-norm is the indicator function of $[-1,1]^n$, so this seems a fair set in which to choose the initial dual variables.
We stopped all the algorithms when the norm of the difference between two consecutive iterates is smaller than $n\times 10^{-6}$  and counted how many times each of the methods converged to the optimal solution $x^*$. The results are summarized in Table~\ref{tab:counterexample}.

\begin{table}[ht!]\centering
\begin{tabular}{rrcccc}
\toprule
n & q& DGA& BDGA& PDCA & BPDCA \tabularnewline
 \midrule
$2$ & $3$ &273  &1\,202& 410 & 10\,000 \tabularnewline
 \midrule
$2$ & $5$ &72  &774&  201 & 10\,000 \tabularnewline
 \midrule
$2$ & $10$ &10  &440&  71 & 10\,000  \tabularnewline
 \midrule
$2$ & $20$ & 0 &253 & 21 & 10\,000 \tabularnewline
 \midrule
$10$& $3$ & 0 &2\,229& 0 & 10\,000 \tabularnewline
\midrule
$20$ & $3$ &0  &2\,076&  0 & 10\,000 \tabularnewline
\bottomrule
\end{tabular}
\caption{For different values of $n$ and $q$, and $10 \, 000$ random starting points, we count the number of instances that each of the algorithms converged to the global minimum $x^*=(-q-1,\ldots,-q-1)^T$ of the function $\varphi_q$ in~\eqref{eq:func_phi_q}. All algorithms are particular instances of BDSA.}\label{tab:counterexample}
\end{table}
The most remarkable fact is that BPDCA converged to the optimal point $x^*$ in every single instance. By contrast, its non accelerated version PDCA very rarely managed to reach the optimum (1.17\% of the overall instances). On the other side, DGA also got trapped very often by non-optimal critical points, only converging to the global minimum in 0.59\% of the instances. Its accelerated version BDGA greatly improved this poor result and converged 11.62\% of the times to the optimal solution.

For illustration, we display in Figure~\ref{fig:counterexample} for $n=2$ and $q=3$ the sequences generated by  DGA and PDCA from the starting point $x^0= (1.5,-0.5)^T$, using $y^0= (0,0)^T$ as the dual starting point for DGA. In addition, we show the iterates obtained after accelerating both methods with the boosted linesearch scheme proposed in Algorithm~\ref{alg:1}. We observe that PDCA got caught by $(0,-1)^T$, which is the nearest critical point that it encountered, while DGA converged to the slightly better critical point $(-1,-1)^T$. On the other hand, both BPDCA and BDGA managed to converge to $x^*=(-4,-4)^T$.

\begin{figure}[ht!]\centering
 \includegraphics[height=.4\textwidth]{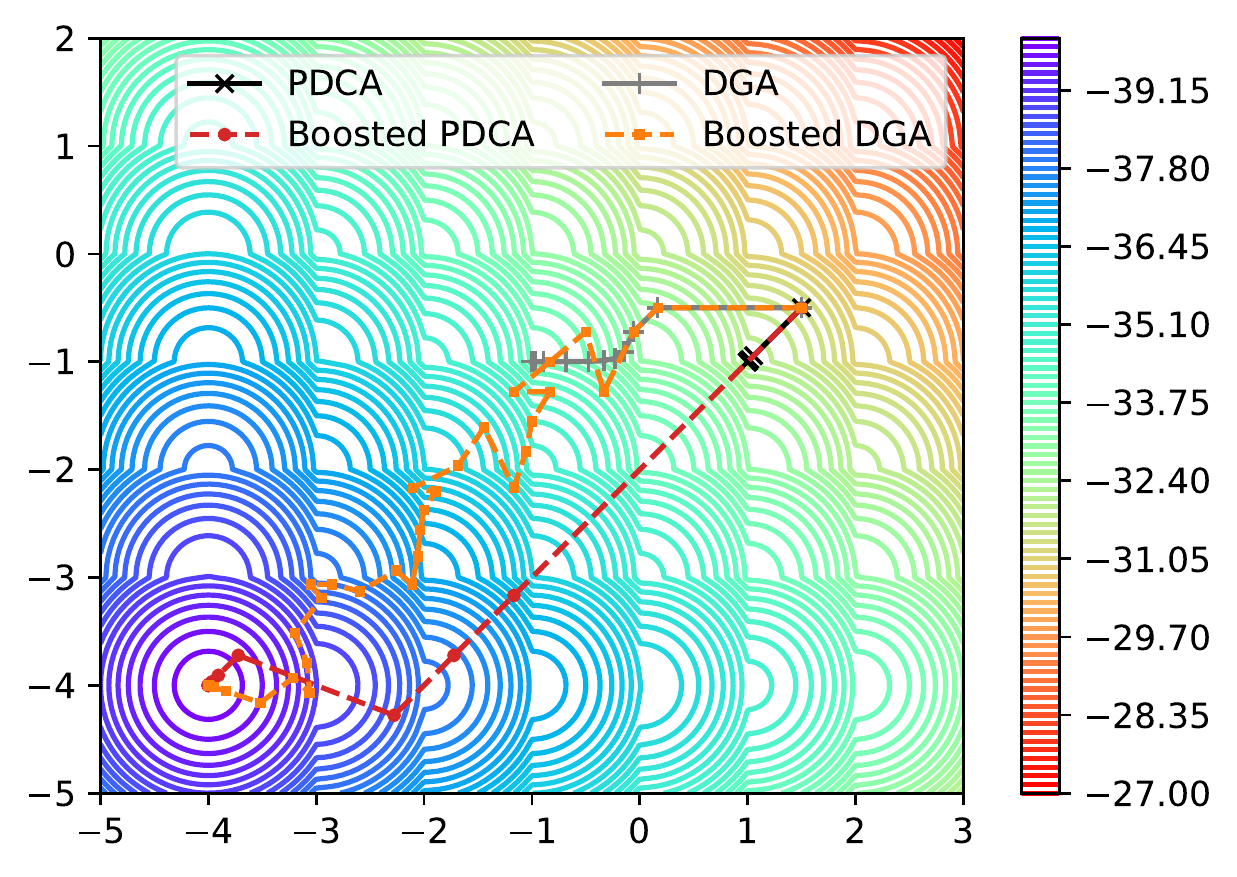}
 \caption{Sequence of iterates generated by PDCA, DGA and their boosted versions for the same starting point when they were applied to the function $\varphi_3$ in $\R^2$. Only the boosted versions manage to converge to the global optimum $x^*=(-4,-4)^T$.}\label{fig:counterexample}
\end{figure}

The fact that PDCA and DGA have such a low rate of success in reaching the global minimum is an indicator of how challenging the proposed family of functions is for this type of algorithms. The advantage of the boosted versions of the algorithms for this family is clear. Even so, it is important to mention that although the linesearch in BDGA only succeeded to improve its success rate up to 11.62\%, it consistently improved the objective values. BDGA converged to a point with lower objective value than DGA in 46.61\% of the instances, while both algorithms attained the same value in the remaining in 53.39\%. DGA did not surpass BDGA in any of the 60\,000 instances.

\subsection{Minimum Sum-of-Squares Constrained Clustering Problem}\label{sect:numerical2}
\emph{Clustering analysis} is a widely-employed technique in data science for classifying a collection of objects into groups, called \emph{clusters}, whose elements share similar characteristics. In order to mathematically describe the clustering problem, we can think of our data as a finite set of points $A=\left\{a^1,\ldots,a^q\right\}$ in $\mathbb{R}^s$. Our goal is to group $A$ into $\ell$ disjoint subsets $A^1,\ldots,A^\ell$, based on the minimization of some clustering measure.

In the \emph{minimum sum-of-squares clustering problem}, the groups are determined by the minimization of the squared  Euclidean distance of each data to the \emph{centroid} of its cluster. In this way, each cluster $A^j$ is identified by its centroid, which we denote by $x^j\in\mathbb{R}^s$, for $j=1,\ldots,\ell$. Letting $X:=(x^1,\ldots,x^{\ell})\in\mathbb{R}^{s \times \ell}$, this clustering problem can be reformulated as the optimization problem.

\begin{equation}\label{eq:DC-clustering}
 \min_{X\in\mathbb{R}^{s\times \ell}} f(X) := \frac{1}{q}\sum_{i=1}^q \omega_i(X),
\end{equation}
where $\omega_i(X):= \min\left\{  \|x^j-a^i\|^2 : j=1,\ldots, \ell  \right\}.$ The function $f$ is $1$-upper-$\cC^2$, since each of the functions $\omega_i$ is $1$-upper-$\cC^2$ (simply by definition).

In~\cite{boostedDCA}, the authors considered the  clustering problem \eqref{eq:DC-clustering} with the aim of grouping the 4001 Spanish cities in the peninsula with more than 500 residents.
In this work, we consider a more challenging version of the above problem in which we add a nonconvex constraint on $X$. This is useful for example when the centroids represent facilities (e.g., hospitals or government administrations). In this case, the centroids cannot be located in the sea, or even in certain areas that should be avoided. Therefore, we are interested in solving the problem
\begin{equation}\label{eq:const-clustering}
	\min_{X\in C} f(X). 
\end{equation}
where $C\subseteq\mathbb{R}^{s\times \ell}$ is the newly introduced (not necessarily convex) constraint. This allows us to make the experiment in~\cite{boostedDCA} more challenging, in the following way:
\begin{itemize}
	\item We consider the cities with more than 500 residents in the Spanish peninsula, but also those in the Balearic Islands, which is an archipelago in the Mediterranean Sea. They sum a total of 4049 cities.
	\item We exclude a region in the center of Spain as a possible location for centroids, which would be useful if decentralization policies were aimed.
	\item We exclude Portugal, which is also contained in the same peninsula as Spain.
\end{itemize}
The resulting closed nonconvex constraint is depicted in Figure~\ref{fig:constraint}.

Now, considering the objective function of problem \eqref{eq:const-clustering} as a large sum of nonsmooth functions, the sum rule for the basic subdifferential only offers an upper estimation rather than an equality. Consequently, it becomes more convenient to compute subgradients of individual functions $\omega_i$ instead of examining the entire function $f$. In this context, the following proposition formally provides the computation of the subdifferential of the functions $\omega_i$, for $i=1,\ldots,q$.

\begin{proposition}
	Given $a\in\R^s$,  consider the function $\omega :\mathbb{R}^{s\times \ell} \to \R$ given by
\begin{equation*}
	\omega(X):= \min\left\{  \|x^j-a\|^2 : j=1,\ldots, \ell  \right\},
\end{equation*}
with $X=(x^1,\ldots,x^{\ell})\in\mathbb{R}^{s \times \ell}$. Then, the following  formula holds
\begin{align}\label{basic_subd_f_i}
	\partial \omega(X)= \left\{    (0,\ldots, 0, \underbrace{2(x^j - a)}_{
		j\text{-th position} }, 0 , \ldots,0)     : \omega(X) = \| x^j - a\|^2         \right\}, \text{ for all } X\in \mathbb{R}^{s\times \ell }.
\end{align}
	\end{proposition}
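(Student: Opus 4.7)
The plan is to exploit the structure of $\omega$ as the pointwise minimum of the $\ell$ smooth functions $g_j(X) := \|x^j - a\|^2$, $j=1,\ldots,\ell$, each depending only on the block $x^j$. Consequently, $\nabla g_j(X) = (0,\ldots,0, 2(x^j-a), 0,\ldots, 0)$ with the nonzero block in the $j$-th position, and \eqref{basic_subd_f_i} amounts to showing that $\partial \omega(X)$ is exactly the collection of gradients $\nabla g_j(X)$ as $j$ ranges over the active index set $I(X) := \{j : g_j(X) = \omega(X)\}$.

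For the inclusion ``$\supseteq$'', I would fix $j_0 \in I(X)$ and construct a sequence $X^k \to X$ at which $j_0$ is the \emph{unique} active index. This can be arranged by keeping the $j_0$-th block fixed and perturbing each other block $x^j$ ($j \neq j_0$) along a direction that strictly increases $\|x^j - a\|^2$: take $x^j - a$ when this is nonzero, and any nonzero direction otherwise. On a neighborhood of $X^k$, the function $\omega$ coincides with $g_{j_0}$, so it is smooth there with $\nabla \omega(X^k) = \nabla g_{j_0}(X^k)$. Continuity of $\omega$ together with continuity of $\nabla g_{j_0}$ and the definition of the basic subdifferential then yield $\nabla g_{j_0}(X) \in \partial \omega(X)$.

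For the reverse inclusion ``$\subseteq$'', I would pick $v \in \partial \omega(X)$ together with sequences $X^k \to X$, $\omega(X^k) \to \omega(X)$, and $v^k \in \hat{\partial}\omega(X^k)$ with $v^k \to v$. The key observation is that since $\omega(Y) \leq g_j(Y)$ pointwise and $\omega(X^k) = g_j(X^k)$ for every $j \in I(X^k)$, any regular subgradient of $\omega$ at $X^k$ is automatically a regular subgradient of each such $g_j$ at $X^k$; as $g_j$ is smooth, this forces $v^k = \nabla g_j(X^k)$ for every $j \in I(X^k)$. In particular, $v^k = \nabla g_{j_k}(X^k)$ for some $j_k \in I(X^k)$, and since the index set is finite I can pass to a subsequence on which $j_k \equiv j_0$. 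Taking limits gives $v = \nabla g_{j_0}(X)$, and the continuity relation $g_{j_0}(X^k) = \omega(X^k) \to \omega(X) = g_{j_0}(X)$ forces $j_0 \in I(X)$.

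The main technical point is the characterization of the regular subdifferential of a pointwise minimum of smooth functions used in the second inclusion; this is a standard calculus fact that can be invoked from~\cite{MR1491362}. Everything else reduces to continuity and the separable block structure of the $g_j$'s, which ensures that the perturbation argument in the first inclusion succeeds with no transversality assumption.
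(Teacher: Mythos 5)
Your proof is correct, but it takes a genuinely different route from the paper's. The paper obtains the inclusion ``$\subseteq$'' by citing the calculus rule for minima of smooth functions, and gets ``$\supseteq$'' indirectly: it uses the identity $\conv \partial \omega(X) = -\partial(-\omega)(X)$ together with the maximum rule for $-\omega$ (a max of quadratics) to show $\conv\partial\omega(X)$ equals the convex hull of the right-hand side of \eqref{basic_subd_f_i}, and then argues that the candidate gradients are linearly independent, so each must actually lie in $\partial\omega(X)$ rather than merely in its convex hull. You instead prove both inclusions directly from the definitions: for ``$\supseteq$'' a perturbation argument exploiting the separable block structure to make a given active index uniquely active along a sequence $X^k\to X$ (note that your construction also keeps $\omega(X^k)=\omega(X)$, as required for basic subgradients), and for ``$\subseteq$'' the elementary observation that $\omega\leq g_j$ with equality at an active point forces $\hat{\partial}\omega(X^k)\subseteq\hat{\partial}g_j(X^k)=\{\nabla g_j(X^k)\}$, followed by a finite-pigeonhole passage to the limit. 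Your approach is more self-contained and avoids two delicate points in the paper's argument, namely the lower-regularity/convex-hull identity and the extreme-point reasoning (which degenerates when some active block satisfies $x^j=a$, although the set then collapses to $\{0\}$ and the conclusion still holds); the paper's approach is shorter on the page because it delegates the work to known calculus rules. One minor remark: the ``standard calculus fact'' you defer to the literature in the second inclusion is really just the one-line support argument you already sketched, so no external characterization of $\hat{\partial}(\min_j g_j)$ is actually needed.
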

\begin{proof}
  To prove \eqref{basic_subd_f_i} let us notice that the inclusion $\subseteq$ follows from the calculus rule for the minimum  function (see, e.g., \cite[Proposition 1.113]{MR2191744}). To prove the opposite inclusion, we recall that by \eqref{eq:subinclusions} the following equality holds \begin{equation}\label{eq_conv}
  	\conv	\partial \omega(X) = -\partial (-\omega)(X).
  \end{equation}
 Now, since $-\omega$ is a maximum  of quadratic forms, we can apply \cite[Theorem 3.46]{MR2191744} to $-\omega$ to conclude that
   \begin{align*}
   	-\partial (-\omega)(X) = \conv B(X),
   \end{align*}
where $B(X)$ is the set in the right-hand side of \eqref{basic_subd_f_i}. Finally, since  all the points  in the set $B(X)$ are linearly independent, we get that $\supset$ must hold in \eqref{basic_subd_f_i}, as otherwise it would contradict \eqref{eq_conv}.
\end{proof}

Based on the aforementioned observation, we are motivated to present Algorithm \ref{alg:3:clustering} as a well-suited variant of the Boosted Double-proximal Subgradient Algorithm for effectively addressing the constrained clustering problem \eqref{eq:DC-clustering}. When $C$ is defined by linear inequality constraints, observe that feasibility of the direction $D^k$ defined in Step~3 can be checked as in~\cite[Algorithm~1]{AragonArtacho2022} (see also Lemma~3.1 there), so the boosting in Step~5 is only run when $D^k$ is in the cone of feasible directions. The set-valued \emph{projector} onto the set $C\subseteq\R^m$ is denoted by $P_C:\R^m\rightrightarrows \R^m$.

\begin{algorithm}
	\caption{Boosted  proximal Subgradient Algorithm for constrained clustering}\label{alg:3:clustering}
	\begin{algorithmic}[1]
		\Require{$X^0 \in\mathbb{R}^{s\times \ell}$, $R\geq 0$, $\rho\in{]0,1[}$ and $\alpha\geq 0$. Set  $k:=0$.}
	
	\State{Choose $v_i^{k}\in \partial \omega_i(X^k)$ for $i=1, \ldots,q$ and set $V^k = \frac{1}{q} \sum_{i=1}^q v_i^k$ .} 
\State{Take some positive $\gamma_k<\frac{1}{2}$ and compute
	\begin{equation*}%\label{eq:algx}
		\begin{aligned}
			\hat{X}^{k} & \in P_C \left(X^k-\gamma_k V^k\right) .
		\end{aligned}
	\end{equation*}
}

\State{Choose any $\overline{\lambda}_k\geq 0$. Set $\lambda_{k}:=\overline{\lambda}_{k}$, $r:=0$ and $D^{k} :=\hat{X}^{k}  -X^{k}$.}
\State{\textbf{if} $D^{k}=0$ \textbf{then} STOP and return $x^k$.}
\State{\textbf{while} $r<R$ and
	\begin{align*}
	 \hat{X}^{k} + \lambda_{k}D^{k} \notin C \text{ or }	f\big(\hat{X}^{k} + \lambda_{k}D^{k}  \big)   >  		f(\hat{X}^{k})- \alpha \lambda_k^2 \| D^{k}\|^2
	\end{align*}%
		\textbf{do }{$r:=r+1$ and $\lambda_k:=\rho^r\overline{\lambda}_k$.}}
	
	\State{\textbf{if }$r=R$ \textbf{then} $\lambda_k:=0$.}

	\State{Set $X^{k+1}  :=\hat{X}^{k} +  \lambda_{k}D^{k}$, $k:=k+1$ and go to Step 1.}
\end{algorithmic}
\end{algorithm}

Now, in order to present our convergence result for Algorithm \ref{alg:3:clustering} we define a suitable notion of critical point:  we say that $\bar X$ is a \emph{critical point} of the constrained clustering problem \eqref{eq:const-clustering} if
\begin{align*}\label{eq:clustering:critical}
	 0 \in \frac{1}{q} \sum_{ i=1}^q \partial \omega_i(\bar{X}) + N_C(\bar{X}),
\end{align*}
where $N_C$ denotes the (\emph{basic}) \emph{normal cone} to $C$, which coincides with $\partial\iota_C$ (see, e.g.,\cite[Proposition~1.79]{MR2191744}).

\begin{corollary}\label{th:conv:clustering}
Given $ X^0 \in\mathbb{R}^{s\times \ell }$ and $\eta\in{]0,1[}$, consider the   sequence   $(X^k)_{k\in\mathbb{N}}$ generated by Algorithm~\ref{alg:3:clustering} with  $\gamma_k\in{\left]0,\frac{\eta}{2}\right]}$ for all $k\in\N$. Then, either Algorithm~\ref{alg:3:clustering} stops at a critical point of~\eqref{eq:const-clustering} after a finite number of iterations or it generates an infinite sequence $ (X^k)_{k\in\mathbb{N}}$ such that the following assertions hold:
	\begin{enumerate}[(i)]
		\item  The sequence $\big(f(X^k )\big)_{k\in\mathbb{N}}$ monotonically (strictly) decreases and converges, and $X^k \in C$ for all $k \geq 1$. Moreover,  the sequences $(X^k)_{k\in\mathbb{N}}$ verifies that
		\begin{equation*}\label{eq:Ostrowski:clustering}
			\sum_{k=0}^{\infty} \|X^{k+1}-X^k\|^2 < \infty.
		\end{equation*}
		\item  If the sequence $(X^k)_{k\in\mathbb{N}}$ is bounded, the set of its accumulation points is nonempty, closed and connected.
		\item  If $\bar{X}\in\mathbb{R}^{s\times \ell }$ is an accumulation point of the sequence $(X^k)_{k\in\mathbb{N}}$, then $\bar{X}$   is a critical point of~\eqref{eq:const-clustering}. In addition, $f(\bar{X}) = \inf_{k\in\mathbb{N}}f(X^k)$.
		\item If $(X^k)_{k\in\mathbb{N}}$ has at least one isolated accumulation point, then the whole sequence $(X^k)_{k\in\mathbb{N}}$ converges to a critical point of~\eqref{eq:const-clustering}.
	\end{enumerate}
\end{corollary}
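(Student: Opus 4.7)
The plan is to recognize Algorithm~\ref{alg:3:clustering} as a special instance of BDSA and then invoke Theorem~\ref{th:conv}. First I would recast~\eqref{eq:const-clustering} in the form~\eqref{eq:P1} by identifying $\mathbb{R}^{s\times\ell}$ with $\mathbb{R}^n$, $n=s\ell$, and choosing
$$f:=\tfrac{1}{q}\sum_{i=1}^q \omega_i,\qquad g:=\iota_C,\qquad p:=0$$
(equivalently, all $h_i\equiv 0$ and $\Psi_i\equiv 0$). Assumption~\ref{assump:1} is then straightforward to check: each $\omega_i$ is $1$-upper-$\cC^2$ on $\mathbb{R}^{s\times\ell}$ by its minimum-of-quadratics representation, so $f$ is $1$-upper-$\cC^2$ on every bounded set; the indicator $g=\iota_C$ is proper, l.s.c. and prox-bounded with threshold $\gamma^g=+\infty$ since $C$ is nonempty and closed; and the infimum $\inf f\geq 0$ is finite.

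Next I would verify that Algorithm~\ref{alg:3:clustering} is exactly BDSA applied to this data. Because $g=\iota_C$, one has $\prox_{\gamma_k g}=P_C$, so Step~2 of Algorithm~\ref{alg:1} reduces to Step~2 of Algorithm~\ref{alg:3:clustering}; the dual iteration~\eqref{eq:algy} is vacuous since $p=0$, so $\mathbf{y}^k$ and $\mathbf{e}^k$ disappear and $\Phi$ collapses to $f+\iota_C$. The subgradient $V^k=\tfrac{1}{q}\sum v_i^k$ with $v_i^k\in\partial\omega_i(X^k)$ generally lies in the relaxed set $\conv\partial f(X^k)$ rather than $\partial f(X^k)$, but this is legitimate: by Proposition~\ref{p:deslemma} applied to each $\omega_i$, averaging the individual descent inequalities yields
$$f(Y)\leq f(X^k)+\langle V^k,Y-X^k\rangle+\|Y-X^k\|^2,\qquad\forall Y\in\mathbb{R}^{s\times\ell},$$
which is precisely the descent bound used in the proofs of Proposition~\ref{p:des} and Theorem~\ref{th:conv}, and this fits the extended framework allowed by Remark~\ref{r:alg1} and Remark~\ref{Remark_sub}. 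The extra feasibility test $\hat{X}^k+\lambda_k D^k\in C$ in Step~5 is benign: whenever the test fails, one simply sets $\lambda_k=0$ as in Step~6 of Algorithm~\ref{alg:1}, keeping $X^{k+1}=\hat{X}^k\in C$ (for $k\geq 1$) and the descent estimate~\eqref{eq:des} intact.

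With these identifications, assertions (i), (ii) and (iv) follow verbatim from the corresponding items of Theorem~\ref{th:conv}: the monotone descent~\eqref{eq:des} specializes to $f(X^{k+1})\leq f(X^k)-a_k\|X^{k+1}-X^k\|^2$, giving the summability in (i); (ii) comes from the resulting Ostrowski property; and (iv) from the isolated-accumulation-point argument. For (iii), let $(X^{k_j})$ be a subsequence converging to $\bar X$; passing to a further subsequence if necessary, $\gamma_{k_j}\to\bar\gamma\in{]0,1/2]}$. Since each $\omega_i$ is locally Lipschitz, the sequences $(v_i^{k_j})$ are bounded and admit cluster points $\bar v_i\in\partial\omega_i(\bar X)$ by outer semicontinuity of the basic subdifferential. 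Hence $V^{k_j}\to \bar V:=\tfrac{1}{q}\sum_i\bar v_i\in\tfrac{1}{q}\sum_{i=1}^q\partial\omega_i(\bar X)$, and passing to the limit in $\hat X^{k_j}\in P_C(X^{k_j}-\gamma_{k_j}V^{k_j})$ via closedness of $\gra P_C$ (cf.~\cite[Theorem~1.25]{MR1491362}) yields $\bar X\in P_C(\bar X-\bar\gamma\bar V)$, i.e.\ $-\bar V\in N_C(\bar X)$, giving the desired criticality. The equality $f(\bar X)=\inf_k f(X^k)$ follows as in the proof of Theorem~\ref{th:conv}\eqref{it:th-3} from lower semicontinuity and the linesearch condition.

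The main obstacle is the point noted above: verifying that allowing $V^k=\tfrac{1}{q}\sum_i v_i^k$ (rather than a single element of $\partial f(X^k)$) does not break the descent analysis, and that the limiting subgradients identify with elements of $\tfrac{1}{q}\sum_i\partial\omega_i(\bar X)$. Both are handled by applying Proposition~\ref{p:deslemma} componentwise and invoking Remark~\ref{Remark_sub}; once this is in place, the rest of the argument is a routine specialization of Theorem~\ref{th:conv}.
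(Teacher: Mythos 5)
Your proposal is correct and follows essentially the same route as the paper: both reduce Corollary~\ref{th:conv:clustering} to Theorem~\ref{th:conv} through the relaxed-subgradient framework of Remarks~\ref{r:alg1} and~\ref{Remark_sub}, with $f=\frac{1}{q}\sum_{i=1}^q\omega_i$, $g=\iota_C$ and $p=0$. The only (cosmetic) difference is how the admissibility of $V^k$ is justified: the paper proves $\frac{1}{q}\sum_{i=1}^q\partial\omega_i(X)\subseteq\conv\partial f(X)$ using lower regularity of $-\omega_i$ and the sum rule, whereas you verify the descent inequality directly by averaging and obtain criticality of $\bar X$ via outer semicontinuity of each $\partial\omega_i$ --- both are valid, and your limiting argument for item (iii) is in fact somewhat more explicit than the paper's one-line appeal to \eqref{general_critical_point}.
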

\begin{proof}
	To prove this corollary,  let us first notice that every subgradient $V^k $ belongs to $\conv \partial f(X^k)$. Indeed,  using \cite[Theorem 3.46(ii)]{MR2191744} we have that each function $-\omega_i$ is lower regular at any point. Hence, using \eqref{eq:subinclusions} and the sum rule for the basic subdifferential we get that
	\begin{align*}
	\conv 	\partial f (X) = -	\partial (-f) (X) = -\frac{1}{q}\sum_{ i=1}^q  \partial (-\omega_i)(X) =\frac{1}{q}\sum_{ i=1}^q \conv \partial \omega_i(X) \supset  \frac{1}{q}\sum_{ i=1}^q  \partial \omega_i(X).
	\end{align*}
Therefore, using Remark~\ref{Remark_sub} we get the result, where the justification that $\bar{X}$ is a critical point of~\eqref{eq:const-clustering} follows from \eqref{general_critical_point}.
\end{proof}

\begin{figure}[ht]\centering
	\includegraphics[height=.5\textwidth]{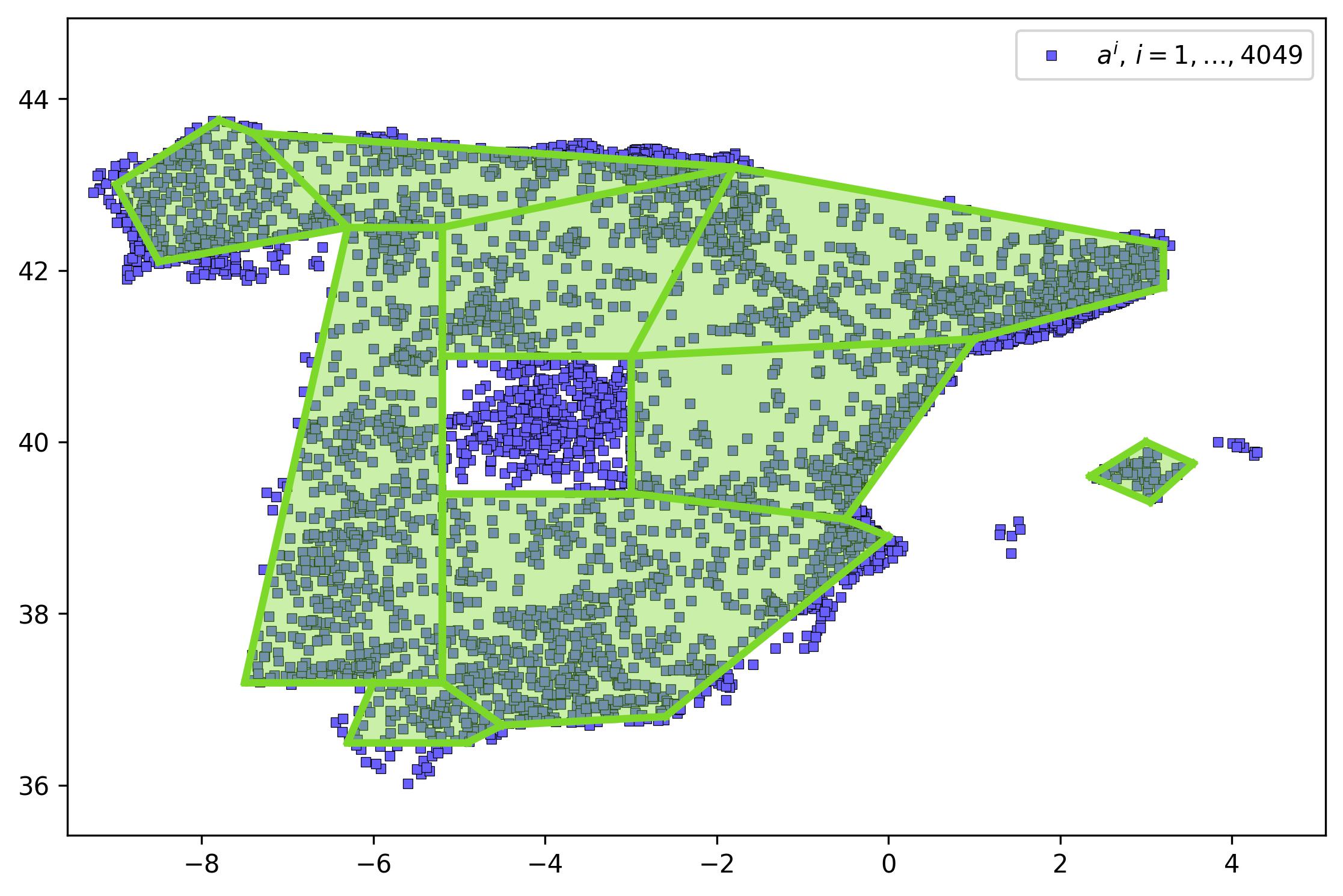}
	\caption{The blue squares represent the 4049 cities of Spain peninsula and Balearic Islands with more than 500 inhabitants. In order to accurately gather all the area of Spain including these cities, we build our constraint $C$ as the union of a finite number of shaded polyhedral sets. Note that the rectangle in the center of Spain is excluded.}\label{fig:constraint}
\end{figure}
\begin{figure}[ht]\centering
 \includegraphics[height=.375\textwidth]{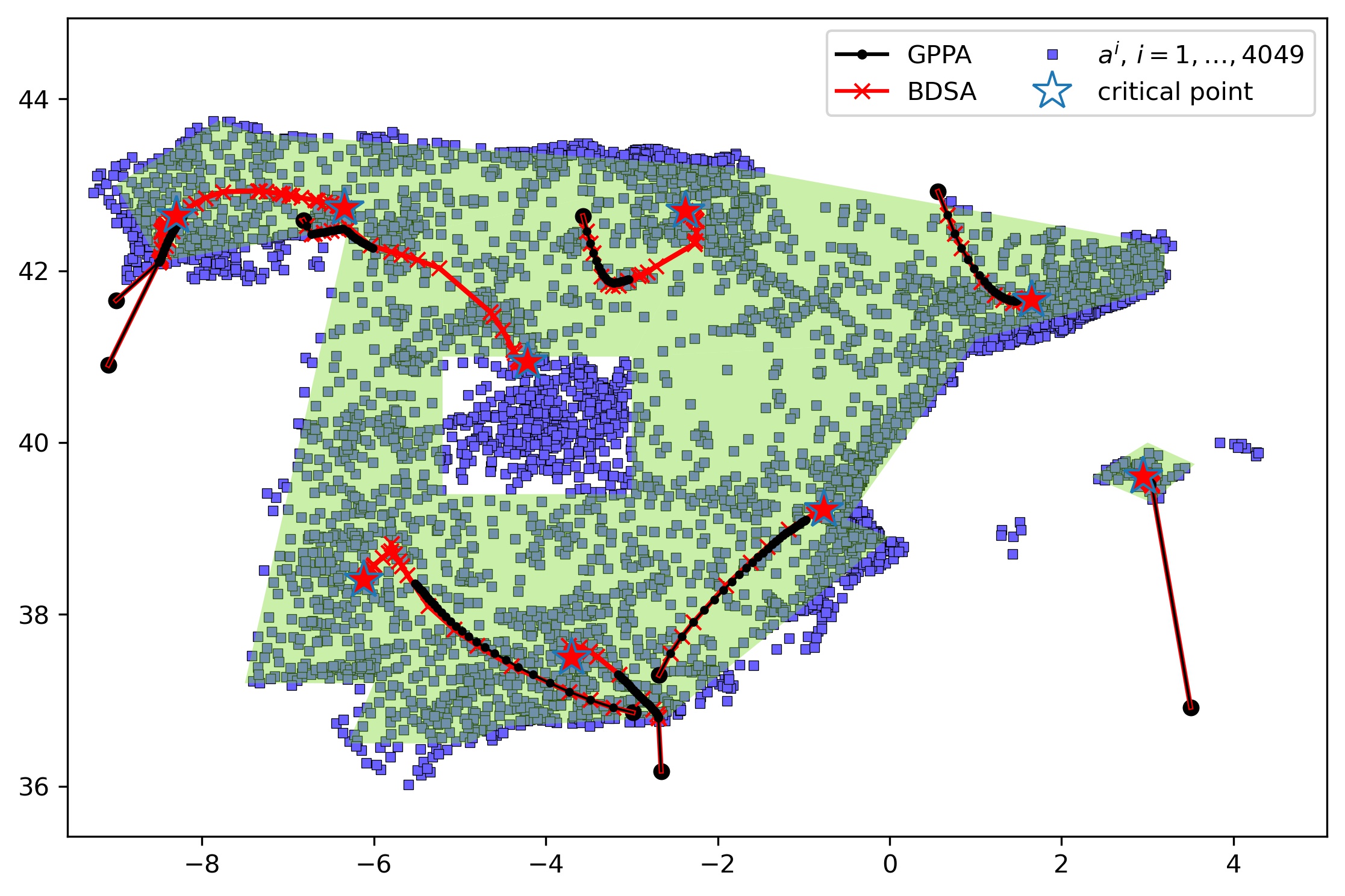}
 \includegraphics[height=.375\textwidth]{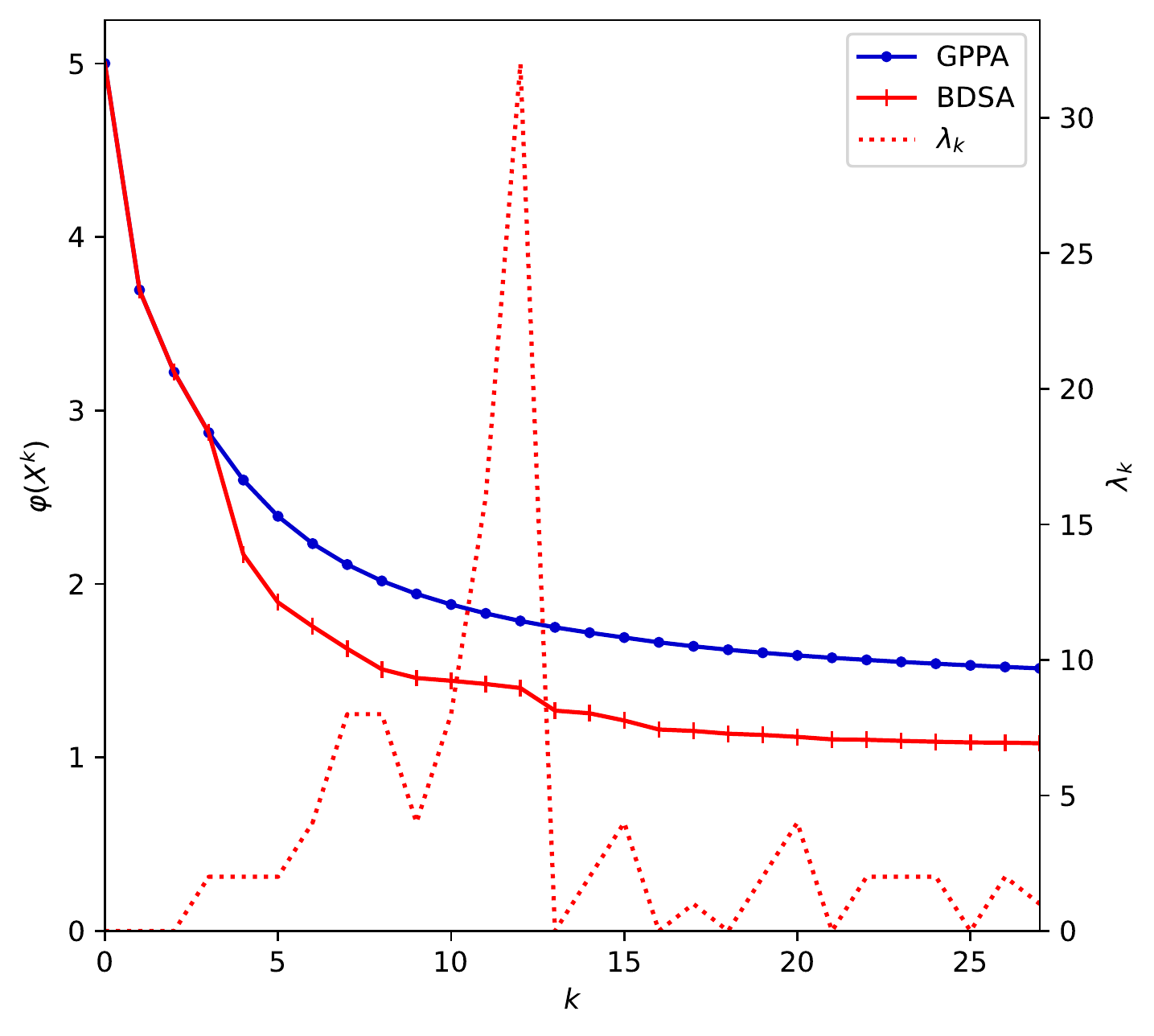}
\caption{
We ran GPPA and BDSA from the same initial random point, for finding $9$ centroids satisfying the constraints shown in Figure~\ref{fig:constraint}. Both algorithms converged to the same critical point, but BDSA only needed 27 iterations. On the left, 27 iterations are drawn on the map. On the right, we compare the value of $\varphi$ for GPPA and BDSA along the iterations. The figure has two vertical axes, the right one corresponds with the stepsize taken by BDSA in every iteration, represented with a dotted line.} \label{fig:clustering9}
\end{figure}

In our first experiment, we aim to find a partition into $9$ clusters of the $4049$ cities in consideration. For this experiment, we set $\gamma := 0.9 \times \left( 1/2\right)$ and ran both GPPA and BDSA, starting from the same initial random point, until BDSA reached a relative error in the objective function (i.e., $|f(X^{k+1})-f(X^k)|/f(X^{k+1})$)  smaller than $10^{-3}$. This stopping criterion was achieved after $27$ iterations, represented in Figure~\ref{fig:clustering9}, with a value of the objective function of $1.0822$. Simultaneously, GPPA returned an objective value of $1.5138$ for the same number of iterations. In particular, Figure~\ref{fig:clustering9} (left) demonstrates significant progress made by BDSA in moving towards regions with a higher concentration of cities. Meanwhile, GPPA required $116$ iterations before reaching the same relative error. Figure~\ref{fig:clustering_decrease} presents an illustrative example where both algorithms converge to distinct critical points. In this occasion, GPPA converged to a critical point with an objective function value of $3.2670$ after $30$ iterations, while BDSA only needed $16$ iterations to converge to a point with a superior value of $2.1827$. The  plots on both Figure~\ref{fig:clustering9} and~\ref{fig:clustering_decrease} give more insight into how the linesearch helps the boosted algorithm to achieve a more significant decrease in the objective value.

%In our first experiment, we aim to find a partition into $9$ clusters of the $4049$ cities in consideration. For this, we set $\gamma := 0.9 \times \left( 1/2\right)$ and ran both GPPA and BDSA, starting from the same initial random point, until BDSA reached a relative error in the objective function (i.e., $|f(X^{k+1})-f(X^k)|/f(X^{k+1})$)  smaller than $10^{-3}$. This was achieved after $27$ iterations, represented in Figure~\ref{fig:clustering9}, with a value of the objective function of $1.0822$, while GPPA returned an objective value of $1.5138$ for the same number of iterations. In particular, it can be observed in Figure~\ref{fig:clustering9} (left)  that BDSA moved further and faster towards areas with larger density of cities. Meanwhile, GPPA required of $116$ iterations before reaching the same relative error. An example in which both algorithms converged to different critical points is displayed in Figure~\ref{fig:clustering_decrease}, this time for finding $5$ clusters. In this occasion GPPA converged to a critical point with an objective function value of $3.2670$ after $30$ iterations while BDSA only needed $16$ iterations to converge to a point with a superior value of $2.1827$. The right plots on both Figure~\ref{fig:clustering9} and~\ref{fig:clustering_decrease} give more insight on how the linesearch helps the boosted algorithm to achieve a larger decrease in the objective value.
\begin{figure}[ht]\centering
 \includegraphics[height=.38\textwidth]{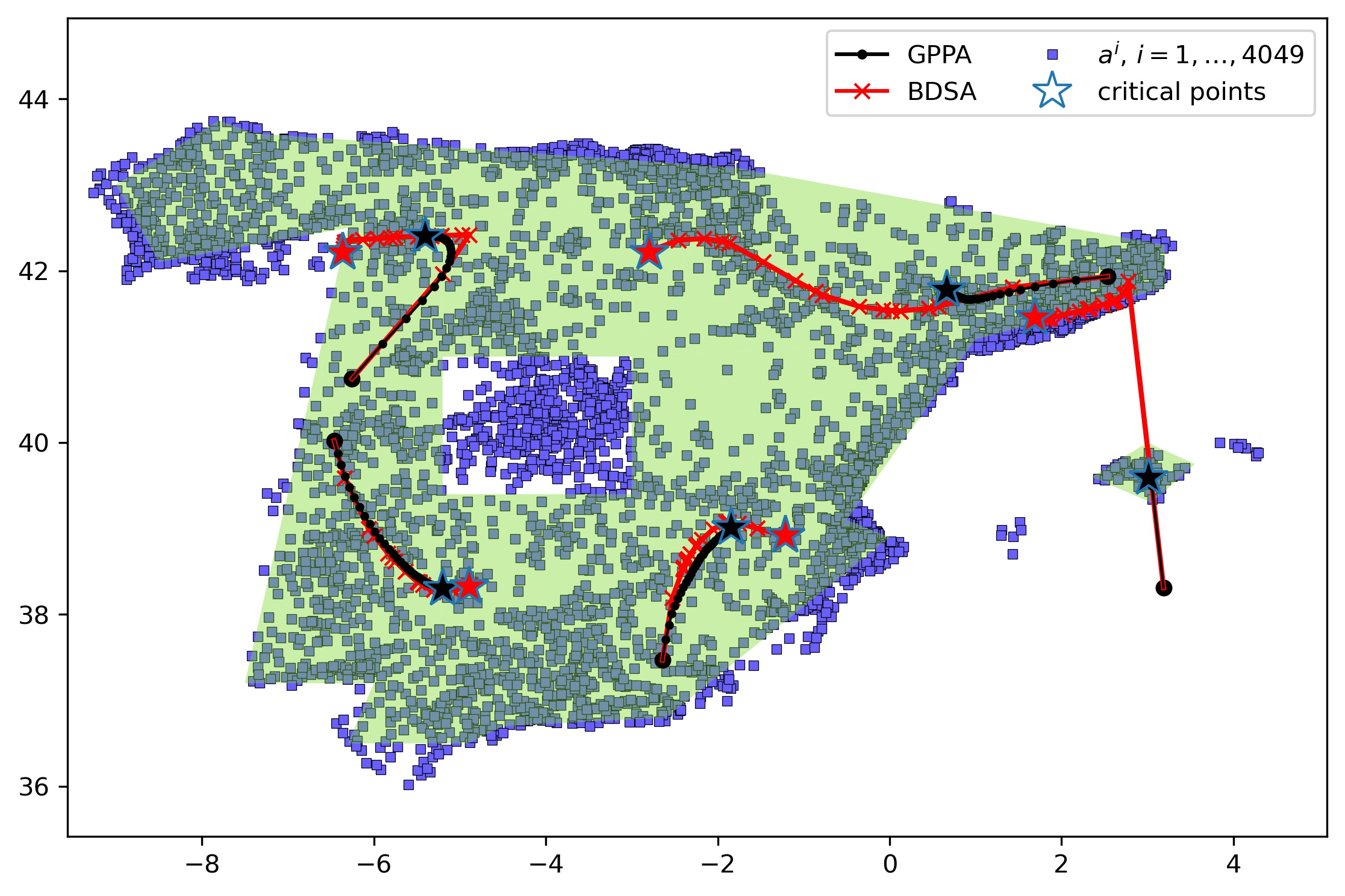}
 \includegraphics[height=.38\textwidth]{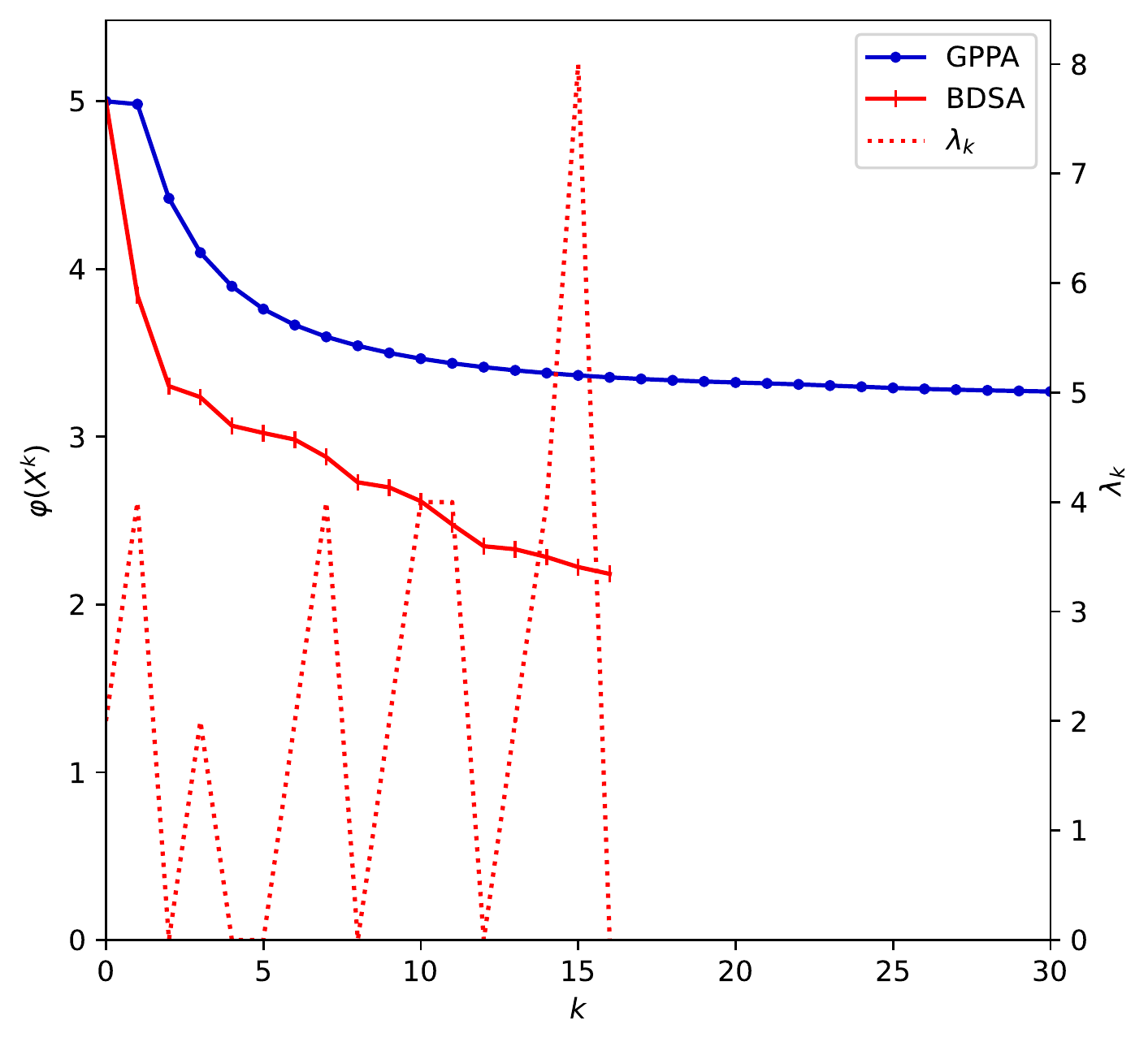}
 \caption{We ran GPPA and BDSA from the same initial random point, for finding $5$ centroids satisfying the constraints shown in Figure~\ref{fig:constraint}. The algorithms converged to different critical points, BDSA in 16 iterations and GPPA in 30. On the left, the iterations are drawn on the map. On the right, we compare the value of $\varphi$ for GPPA and BDSA along the iterations. The figure has two vertical axes, the right one corresponds with the stepsize taken by BDSA in every iteration, represented with a dotted line.}\label{fig:clustering_decrease}
\end{figure}

To demonstrate that this is the general trend, we solved the same problem with the Spanish cities for a different number of clusters $\ell\in{\{3,5, 10, 15, 20, 30, 40, 50\}}$. The results are summarized in Figure~\ref{fig:clusteringF}. For each of these values and for $10$ different random starting points, we ran BDSA until the relative error in the objective function was smaller than $10^{-3}$. Then, GPPA was ran from the same starting point until it reached the same objective value than BDSA or until the relative error in the objective function was smaller than $10^{-3}$. In particular, GPPA failed to reach the same value than BDSA in $68$ out of the $80$ runs. In Figure~\ref{fig:clusteringF} we present the iterations ratio (left) and  time ratio (right) of both algorithms for all the instances. On average, BDSA was approximately $3$ times faster with respect to the number of iterations than GPPA, and  more than $2$  times faster in time. There was only one instance where GPPA was faster than BDSA (both in time and iterations), for $\ell=5$, but the value of $\varphi$ at the stopping point for GPPA was $1.43$ times larger than that of BDSA. Therefore, the increase in the running time and iterations of BDSA was a consequence of converging to a better critical point than GPPA.

\begin{figure}[ht!]\centering
 \includegraphics[height=.3\textwidth]{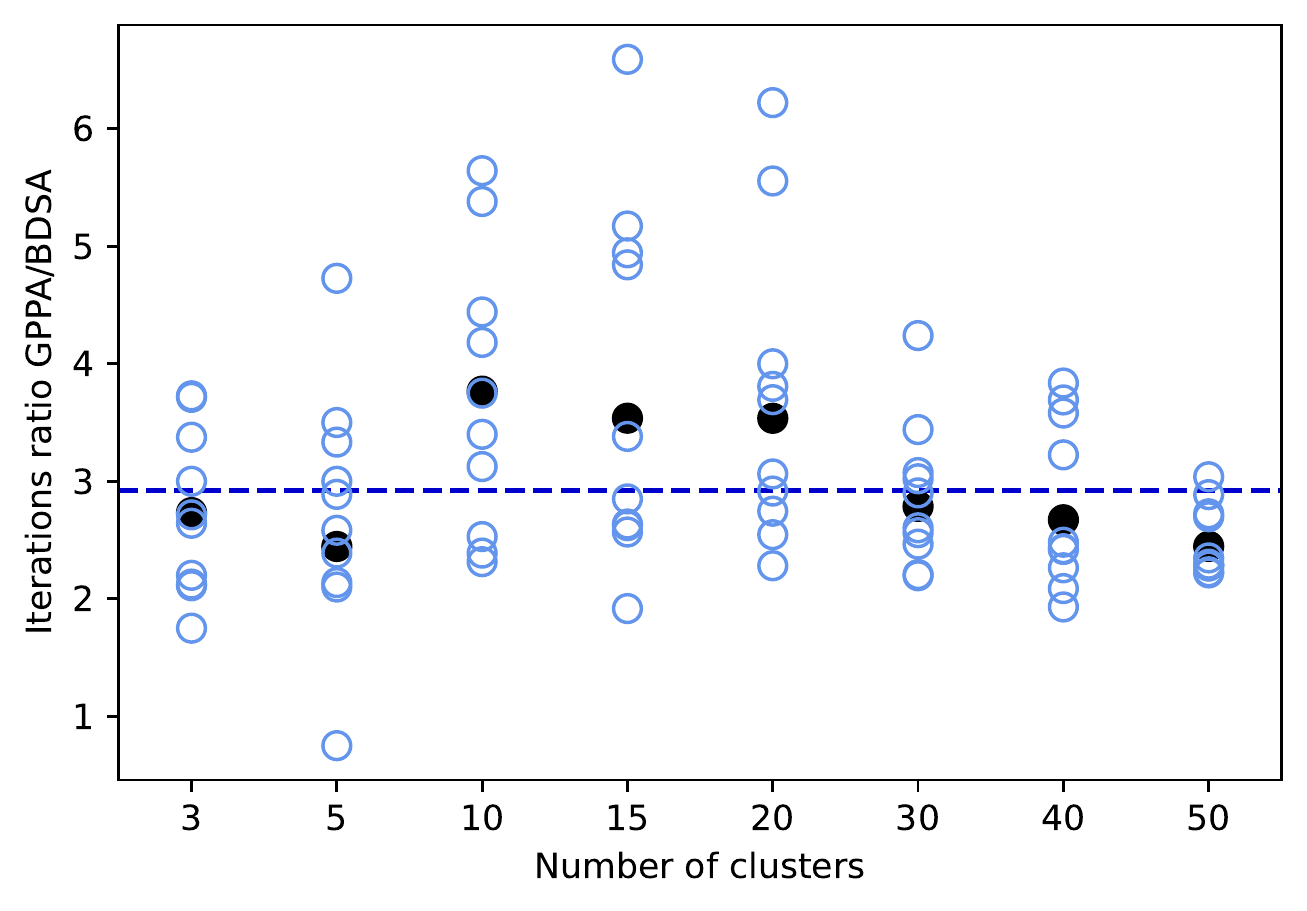}\hfill
 \includegraphics[height=.3\textwidth]{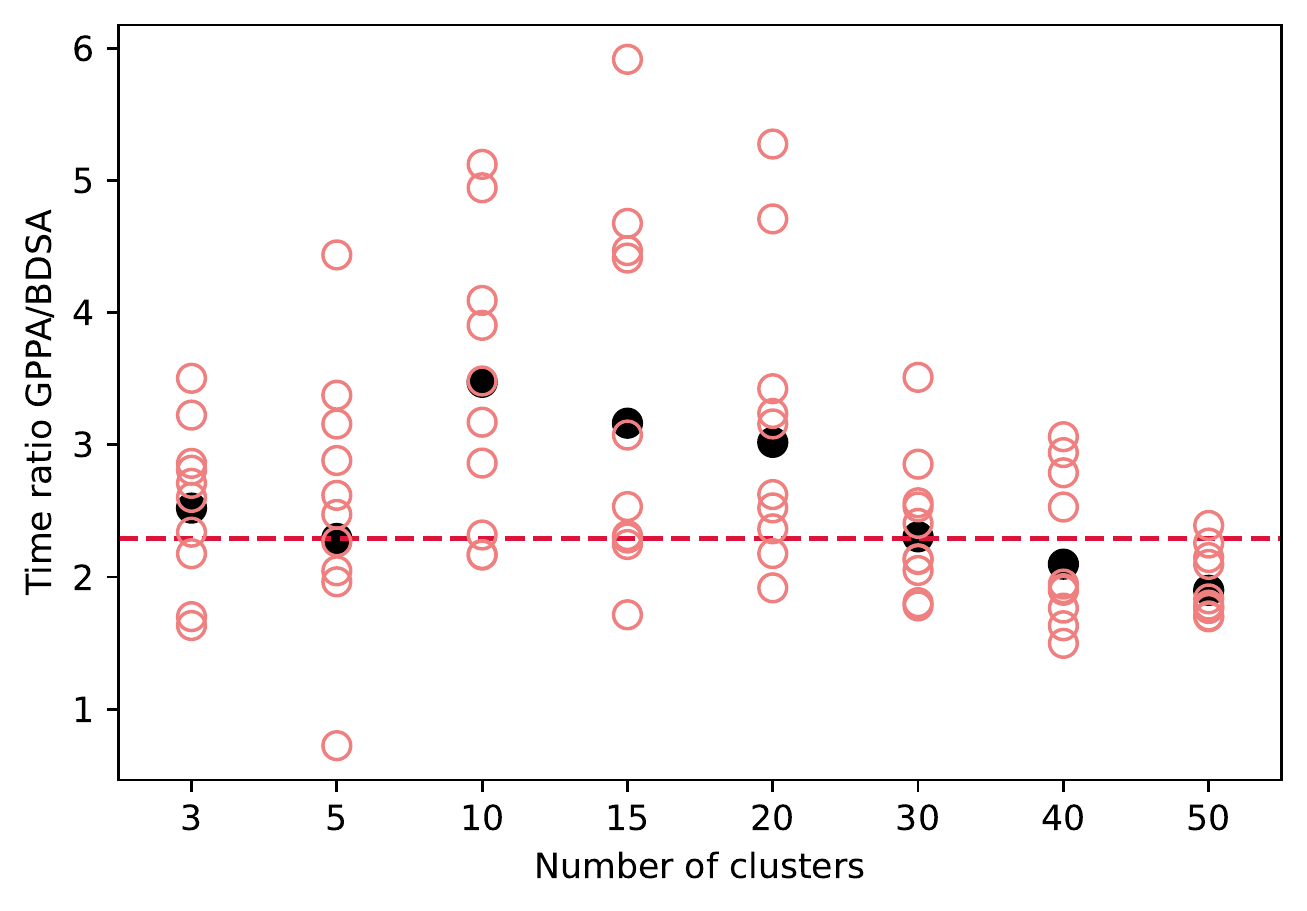}
 \caption{Iteration and time ratios between GPPA and BDSA for solving problem~\eqref{eq:const-clustering} with different number of clusters. Both algorithms were run until  they reached  the same relative error, or until GPPA reached the objective value obtained by BDSA. The unfilled markers show the ratio for every particular instance, the black dots represent the ratio average for a fixed  number of clusters  and the dashed line the overall ratio average.} \label{fig:clusteringF}
\end{figure}

	\subsection{A Nonconvex Generalization of Heron's Problem}\label{sect:numerical3}

	The original formulation of Heron's problem consists in the following: given a straight line in the plane, find a point $x$ in it such that the sum of the distances from $x$ to two given points is minimal. A generalization of Heron's problem to a Euclidean space of arbitrary dimension $\mathbb{R}^n$ was introduced in~\cite{mordukhovich2012solving},  where the line was substituted by a  closed convex set $C_0$ and the two given points by a finite family of closed convex sets $\{C_i\}_{i=1}^p$. This convex problem was then solved by means of a projected subgradient algorithm. Lately, different splitting methods have also been employed to tackle this generalization of Heron's problem~\cite{bot2013douglas,campoy2022product}.
	
	In this subsection, we go one step ahead and consider a more general version of the problem. Specifically, we seek to minimize a weighted sum of the squared distance of the images of $x$ by certain differentiable functions $\Psi_i:\mathbb{R}^n\to\mathbb{R}^{m_i}$ with Lipschitz continuous gradients, for $i=1,\ldots,p$. Namely, given some closed (but not necessarily convex) sets $C_0\subseteq \R^n$ and $C_i\subseteq\R^{m_i}$, for $i=1,\ldots,p$, we are interested in solving the following nonconvex generalization of Heron's problem
\begin{align}\label{eq:GQHP}
	\min_{x \in C_0} \sum_{i=1}^p \frac{w_i}{2}d^2 (\Psi_i(x),C_i), %\tag{GHP}
\end{align}
where $w_i > 0$ represents a weight associated to the $i$-th constraint.

Problem~\eqref{eq:GQHP} can be easily reformulated as an instance of~\eqref{eq:P1}. Indeed, as shown in Example~\ref{prop:Asplund}, the  squared distance function admits the following decomposition:
\begin{align*}
	 \frac{1}{2} d^2 (\Psi_i(x),C_i)= \frac{1}{2} {\| \Psi_i(x)\|^2 } - \Asp{C_i}{}(\Psi_i(x)).
\end{align*}
Hence, problem~\eqref{eq:GQHP} is equivalent to the unconstrained problem
\begin{equation*}
 \min_{x\in\mathbb{R}^n} \iota_{C_0}(x) + \sum_{i=1}^p w_i \left( \frac{1}{2}\|\Psi_i(x)\|^2 - \Asp{C_i}(\Psi_i(x))\right).
\end{equation*}
It is clear that this problem can be expressed in the form of~\eqref{eq:P1} by choosing $f:= \sum_{i=1}^p  \frac{w_i}{2}\|\Psi_i(\cdot)\|^2$, $g=\iota_{C_0}$ and $h_i = w_i\Asp{C_i}$, for $i=1,\ldots,p$. Note that although the Asplund function $\Asp{C_i}$ is always convex, $\Asp{C_i} \circ \Psi_i$ may not be convex if $\Psi_i$ is not linear. Therefore,  $\frac{w_i}{2} d^2 (\Psi_i(x),C_i)$ is not necessarily upper-$\cC^2$.

In the following we analyze the performance of DSA and BDSA for the particular instance of the problem in which $w_i:=1$ for all $i=1,\ldots,p$, $C_0$ is the closed ball of radius $r_{C_0} := 5$ in $\mathbb{R}^n$, and the soft constraints $C_i$ are hypercubes of edge length $2$. To avoid intersections with $C_0$, the centroids of the hypercubes were randomly generated with norm between $7$ and $10$. We set all $\Psi_i := \Psi:\mathbb{R}^n\to \mathbb{R}^m$, with
\begin{equation*}
 \Psi(x) := \left( x^TQ_1x, x^TQ_2x, \ldots, x^TQ_mx \right)^T,
\end{equation*}
where, for simplicity, we chose $Q_1,\ldots,Q_m$ as diagonal matrices with randomly generated entries in $]-1,1[$. Note, that the gradient of  $\Psi$ is the linear transformation given by
\[
 \nabla \Psi(x) = 2(Q_1x,Q_2x,\ldots,Q_mx),
\]
which is Lipschitz continuous with constant $L_{\Psi}:=2 \rho(Q) $, where $\rho(Q)$ denotes the spectral radius of $Q:=(Q_1,Q_2,\ldots,Q_m)$. On the other hand, note that $\nabla f$ is also $L_f$-Lipschitz in the ball $C_0$ for $L_f:=6pr_{C_0}^2\rho(Q)\sqrt{\sum_{i=1}^m\rho(Q_i)^2}$, since
\begin{align*}
\|\nabla f(x)-\nabla f(y)\|&=\|p\left(\nabla\Psi(x)\Psi(x)-\nabla\Psi(y)\Psi(y)\right)\|\\
&\leq p\|\nabla\Psi(x)\Psi(x)-\nabla\Psi(x)\Psi(y)\|+p\|\nabla\Psi(x)\Psi(y)-\nabla\Psi(y)\Psi(y)\|\\
&\leq p\|\nabla\Psi(x)\|\sqrt{\sum_{i=1}^m(x^TQ_ix-y^TQ_iy)^2}+p\|\Psi(y)\|L_{\Psi}\|x-y\|\\
&\leq 2\rho(Q)p\left(\|x\|\sqrt{\sum_{i=1}^m4r_{C_0}^2\rho(Q_i)^2}+\max_{z\in C_0}\|\Psi(z)\|\right)\|x-y\|\\
&\leq 2\rho(Q)p\left(\|x\|2r_{C_0}\sqrt{\sum_{i=1}^m\rho(Q_i)^2}+\max_{z\in C_0}\sqrt{\sum_{i=1}^m\|z\|^4\ \rho(Q_i)^2}\right)\|x-y\|\\
&\leq 6\rho(Q)pr_{C_0}^2\sqrt{\sum_{i=1}^m\rho(Q_i)^2}\|x-y\|=L_f\|x-y\|.
\end{align*}
Therefore, according to~\eqref{eq:descentlemmaineq} and Proposition~\ref{p:deslemma}, the function $f$ is $L_f /2$-upper-$\cC^2$.

In order to fairly illustrate the advantages of the linesearch step in BDSA, we initially perform an experiment to find some adequate performing parameters for DSA (i.e., when no linesearch was performed).
In all the experiments in this subsection, we stopped the algorithms when
\begin{equation}\label{eq:stop}
|\Phi(x^{k+1},y^{k+1})-\Phi(x^k,y^k)|< 10^{-6}.
\end{equation}

\paragraph{Tuning the parameters for DSA}

We set $n=3$, $m=4$ and $p=3$, and ran  Algorithm~\ref{alg:1}  with different choices of parameters for $5$ randomly generated problems and $5$ different starting points for each problem (i.e., 25 instances in total). Having in mind the bounds for the parameters given in Theorem~\ref{th:conv}, we tested the algorithm for every combination of the following choices:
\begin{align*}
\gamma_k&:=\eta\left( L_f + L_{\Psi} \sum_{i=1}^p \|y_i^k\| \right)^{-1},\text{ with }
 \eta \in {\{0.1, 0.3, 0.5, 0.7, 0.9, 0.99\}},\\
\mu^k_i &:=\mu\in{\{0.5, 1, 5\}},\text{ for all } k\geq 0.
 \end{align*}
For every combination of stepsize parameters the algorithm obtained the same value of $\Phi$ in the last iterate. However, there is a considerable variability in the number of iterations  needed for reaching the stopping criterion, which we show in Table~\ref{fig:ParamSplitting}. Note that the parameter~$\gamma_k$ is the only one providing significant differences, being $\eta= 0.99$ the best performing value. The parameter $\mu$ does not seem to have much influence in the results obtained.
\begin{table}[ht!]\centering
\begin{tabular}{lcccccc}
\toprule
 & $\eta=0.1$ & $\eta=0.3$ & $\eta=0.5$ & $\eta=0.7$ & $\eta=0.9$ &  $\eta=0.99$\tabularnewline
\midrule
$\mu= 0.5$ & 8\,013.0 & 3\,104.8 & 1\,989.0& 1\,486.6 & 1\,201.5&  1\,104.9\tabularnewline
  $\mu = 1$ &   8\,012.4 & 3\,103.7 & 1\,987.6 &1\,485.0&1\,199.9 &1\,103.4\tabularnewline
  $\mu = 5$ &  8\,011.7 &3\,102.9&1\,986.5& 1\,483.9 & 1\,198.6& 1\,102.1\tabularnewline
\bottomrule
\end{tabular}\caption{Average number of iterations of DSA for $5$ random problems~\eqref{eq:GQHP} and $5$ random starting points for each problem, with $n=3$, $m=4$, $p=3$ and different values of the parameters.}\label{fig:ParamSplitting}
\end{table}

\paragraph{DSA vs. BDSA: Benefit of linesearches}

Now we compare both versions of Algorithm~\ref{alg:1} with the  best choice parameter $\eta=0.99$. Since $\mu$ does not have much effect, we set a small value $\mu^k=0.5$, which is more likely to satisfy the bound in Proposition~\ref{l:descentdirection}(iii).
We tested DSA and BDSA for different values of $n$, $m$ and~$p$. Both algorithms obtained similar results regarding the objective values, showing  only differences after the second decimal, in favor of BDSA in all but one instance, so we only present the results regarding number of iterations (without counting those needed for the linesearch) and the running time.
The results are summarized in Figures~\ref{fig:SplitVSBoosted} and~\ref{fig:SplitVSBoosted_r}, where we observe that BDSA clearly outperformed DSA in each of the $120$ instances. In particular, BDSA was on average more than $2.5$ times faster than DSA.

\begin{figure}[ht!]\centering
 \includegraphics[height=.32\textwidth]{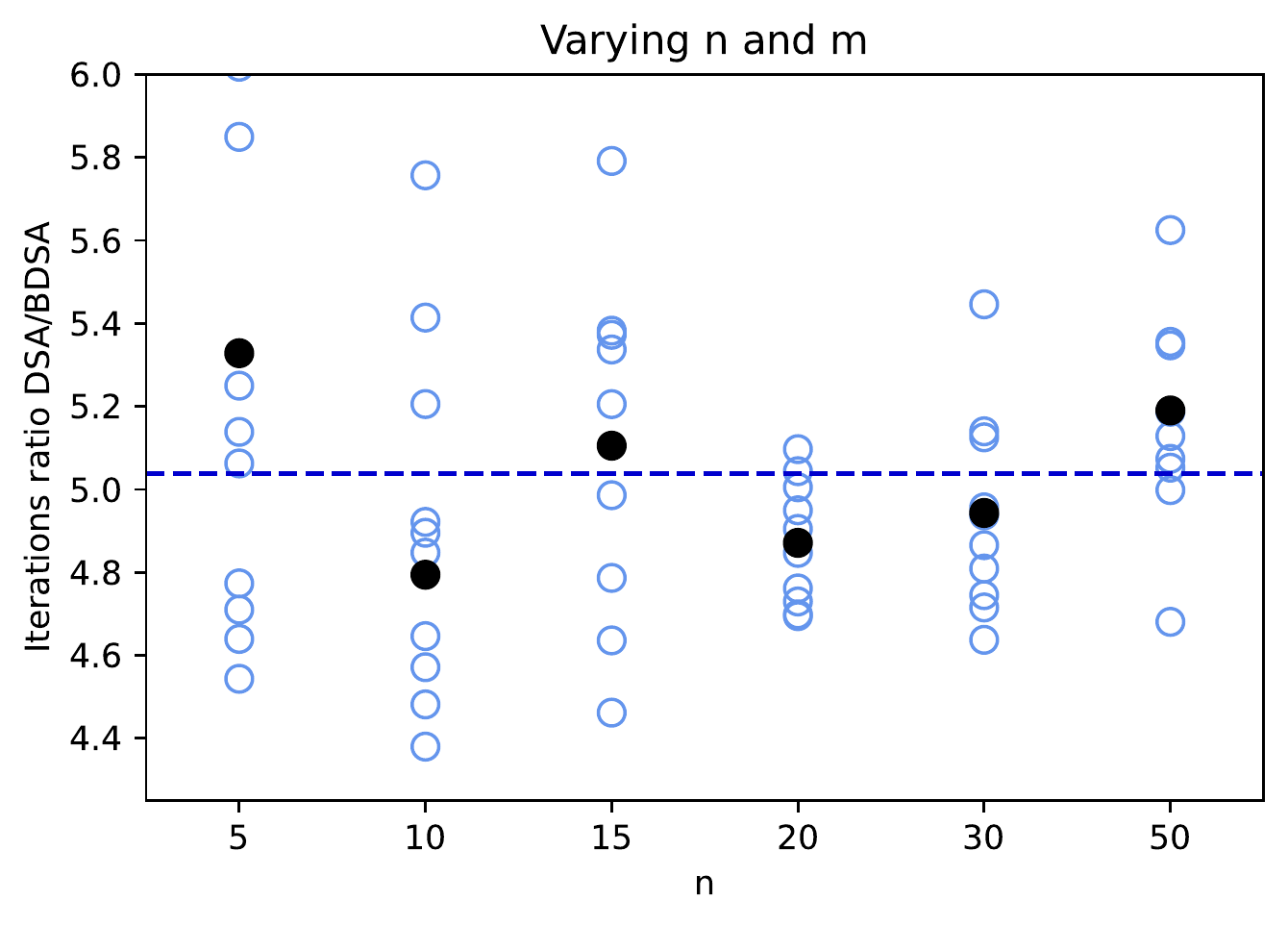}\hfill
 \includegraphics[height=.32\textwidth]{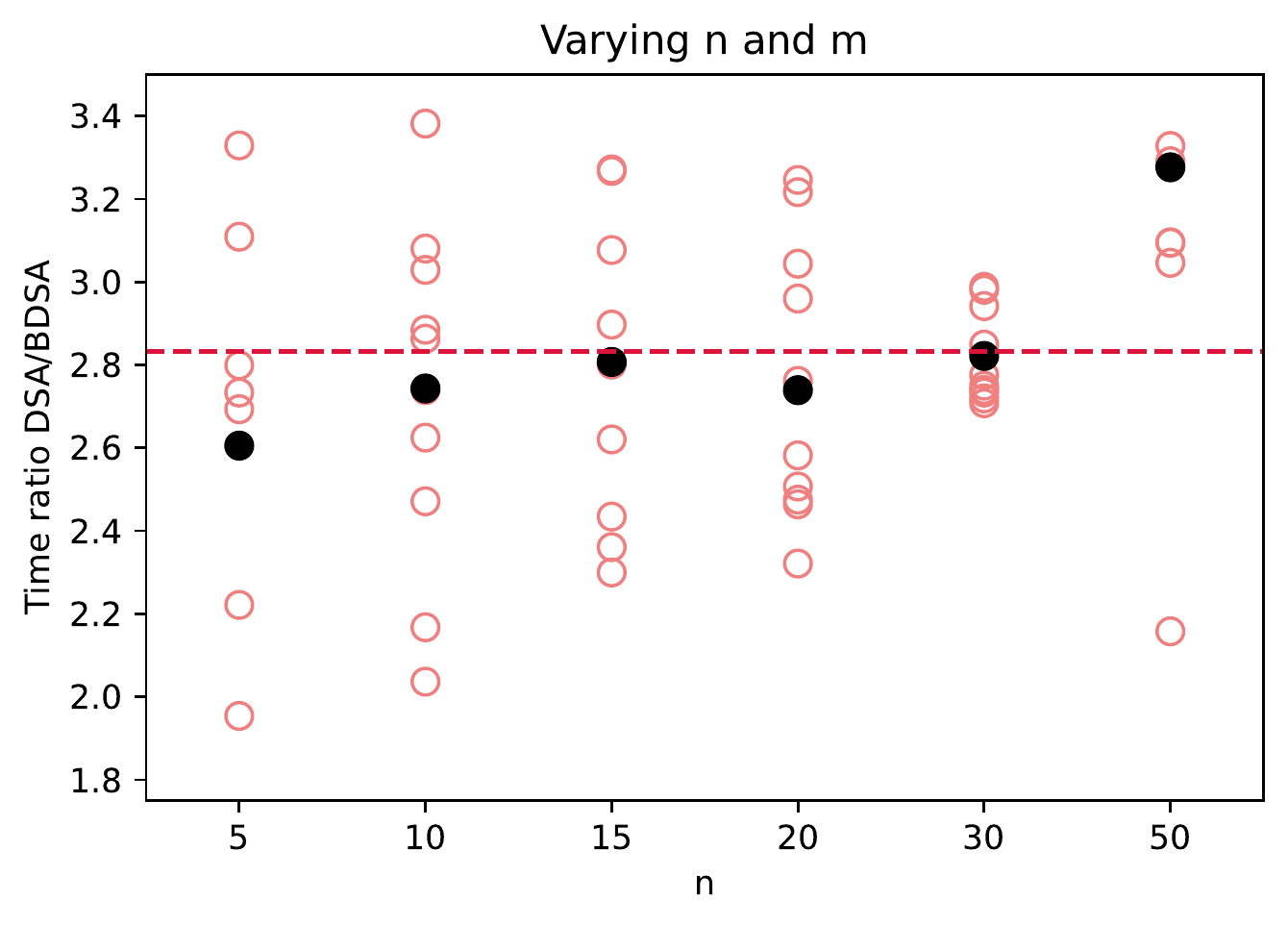}
 \caption{For $p=3$, each $n\in{\{5,10,15,20,30,50\}}$ and $m= 1.2\,n$, we randomly generated $10$ different problems and ran DSA and BDSA initialized at the same random starting point. We plot with circles the ratio of the number of iterations (left) and the running time (right). The black dots represent the average ratio for a fixed  $n$  and the dashed line the overall average ratio. }\label{fig:SplitVSBoosted}
\end{figure}

\begin{figure}[ht!]\centering
 \includegraphics[height=.32\textwidth]{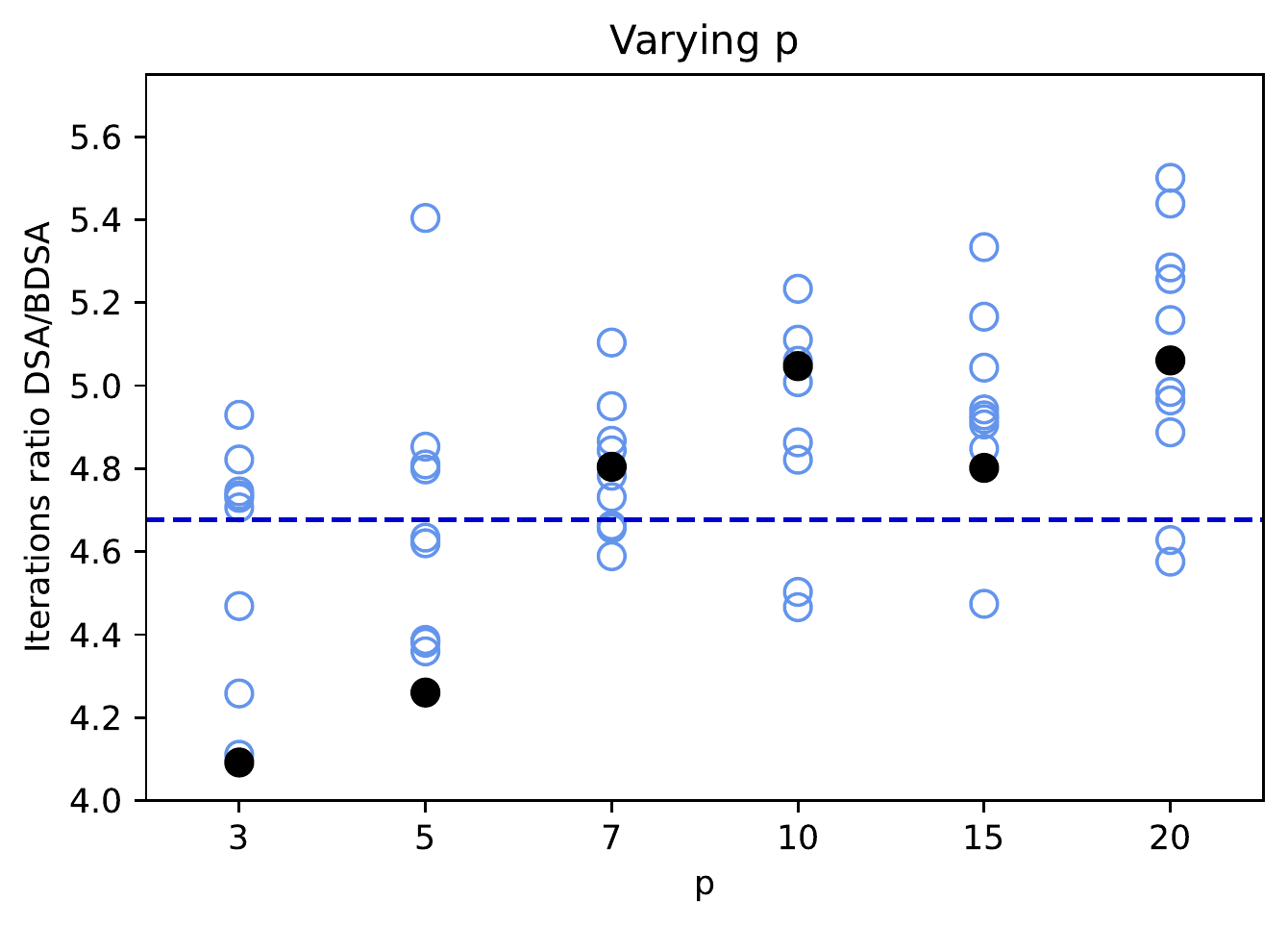}\hfill
 \includegraphics[height=.32\textwidth]{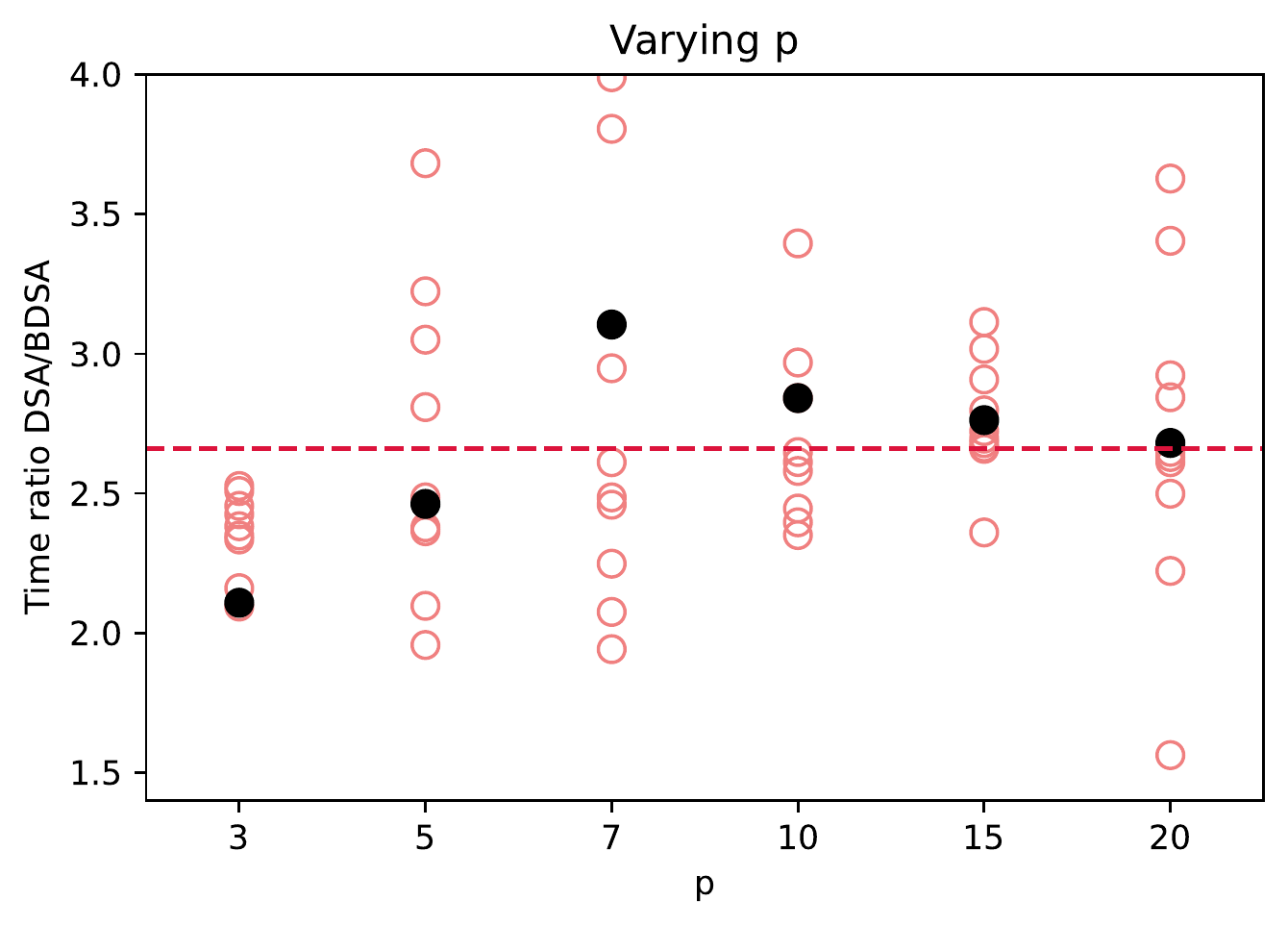}
 \caption{For $n=20$, $m=16$ and each $p\in{\{3, 5, 7, 10, 15, 20\}}$, we randomly generated $10$ different problems and ran DSA and BDSA initialized at the same random starting point. We plot with circles the ratio of the number of iterations (left) and the running time (right). The black dots represent the average ratio for a fixed  $n$  and the dashed line the overall average ratio. }\label{fig:SplitVSBoosted_r}
\end{figure}

\paragraph{The generalized Heron Problem with nonconvex sets} In our last experiment, we consider examples of~\eqref{eq:GQHP} in which the sets $C_i$ are not necessarily convex. Instances of the generalized Heron problem with nonconvex sets have already been studied for example in \cite{mordukhovich2012applications}.  In this experiment, we let  $\Psi_i$ be a linear mapping of the form $\Psi_i(x)=Qx$, with $Q\in\R^{m\times n}$, for all $i=1,\ldots,p$. We showed in Example~\ref{prop:Asplund} that $x\mapsto\frac{1}{2}d^2(Qx,C)$ is a $\kappa$-upper-$\cC^2$ function  with $\kappa =  \rho(Q)^2/2$. Moreover, note that by \cite[Theorem~5.3~(iii)]{aragonartacho2023coderivativebased} its subdifferential at a point $x\in\mathbb{R}^n$ is given by
\[
\partial \left(\frac{1}{2}d^2(Q(\cdot),C)\right)(x) = Q^T \bigg(Qx-P_{C}(Qx)\bigg).
\]
These two facts allow us to tackle~\eqref{eq:GQHP} when $C_i$  are not necessarily convex  as an instance of problem~\eqref{eq:P1}  by setting $f:= \sum_{i=1}^p \frac{w_i}{2}d^2(Q(\cdot),C_i)$, $g= \iota_{C_0}$ and $h=0$.

We work with the particular instance of~\eqref{eq:GQHP}  in which $p=1$, $w_1= 1$ and $C_1$ is given as the union of $5$ hypercubes which were generated in the same way  as in the previous experiment. The entries of $Q$ were randomly generated in the interval $]-1,1[$. As in the previous experiment, up to the authors' knowledge, in this setting BDSA does not recover any method already proposed in the literature. In Figure~\ref{fig:SplitVSBoosted_NC} we show the results of running both DSA and BDSA for $5$ different dimensions and $10$ different randomly generated problems for each dimension. In this case, BDSA reached a  better value of the objective function than DSA in  $49$ out of $50$ instances. Regarding the comparison in iterations and time, BDSA was again significantly faster: on average, DSA needed around $5$ times more iterations and $2.5$ more running time than BDSA to satisfy the stopping criterion~\eqref{eq:stop}.

\begin{figure}[ht!]\centering
 \includegraphics[height=.32\textwidth]{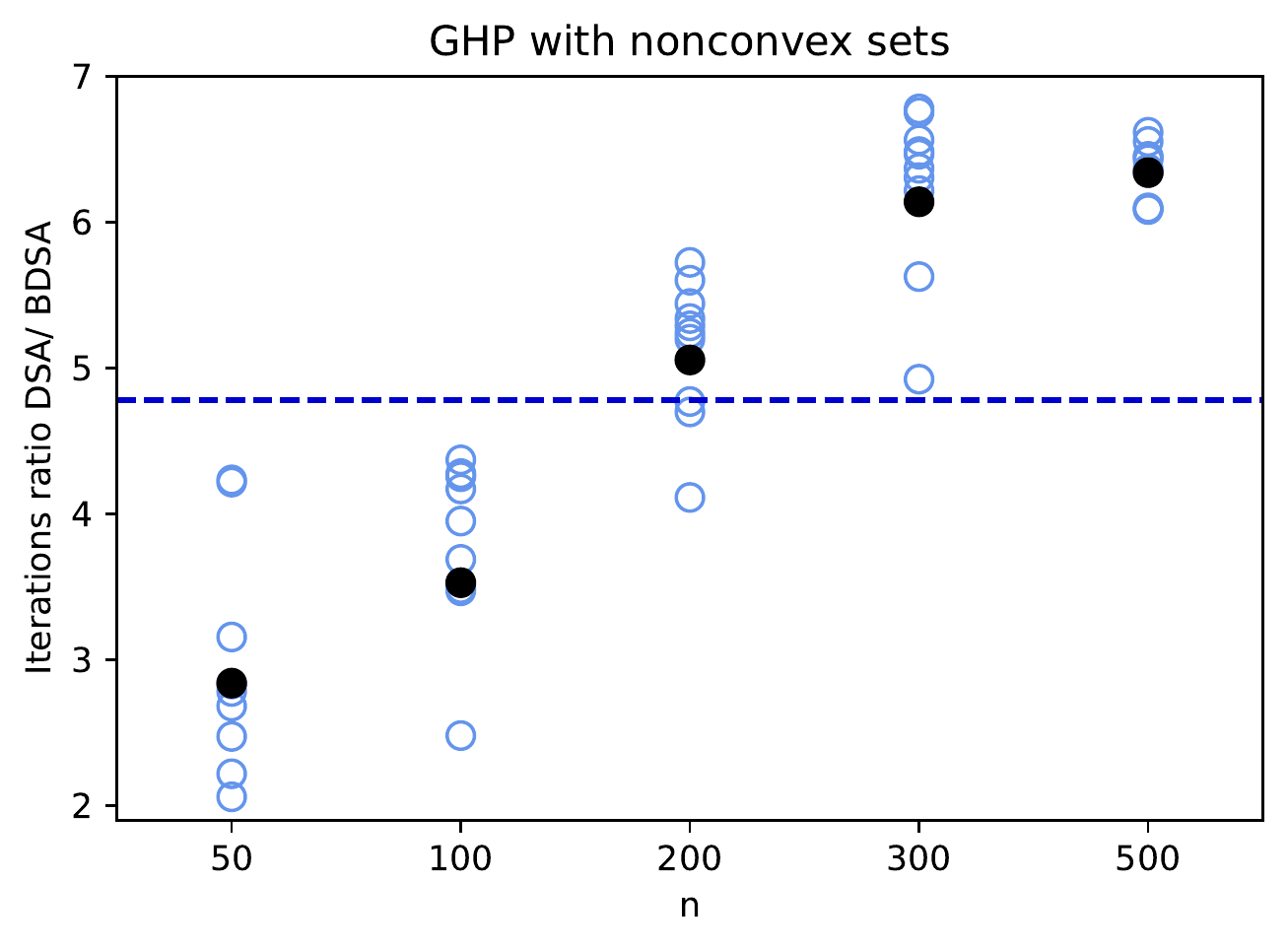}\hfill
 \includegraphics[height=.32\textwidth]{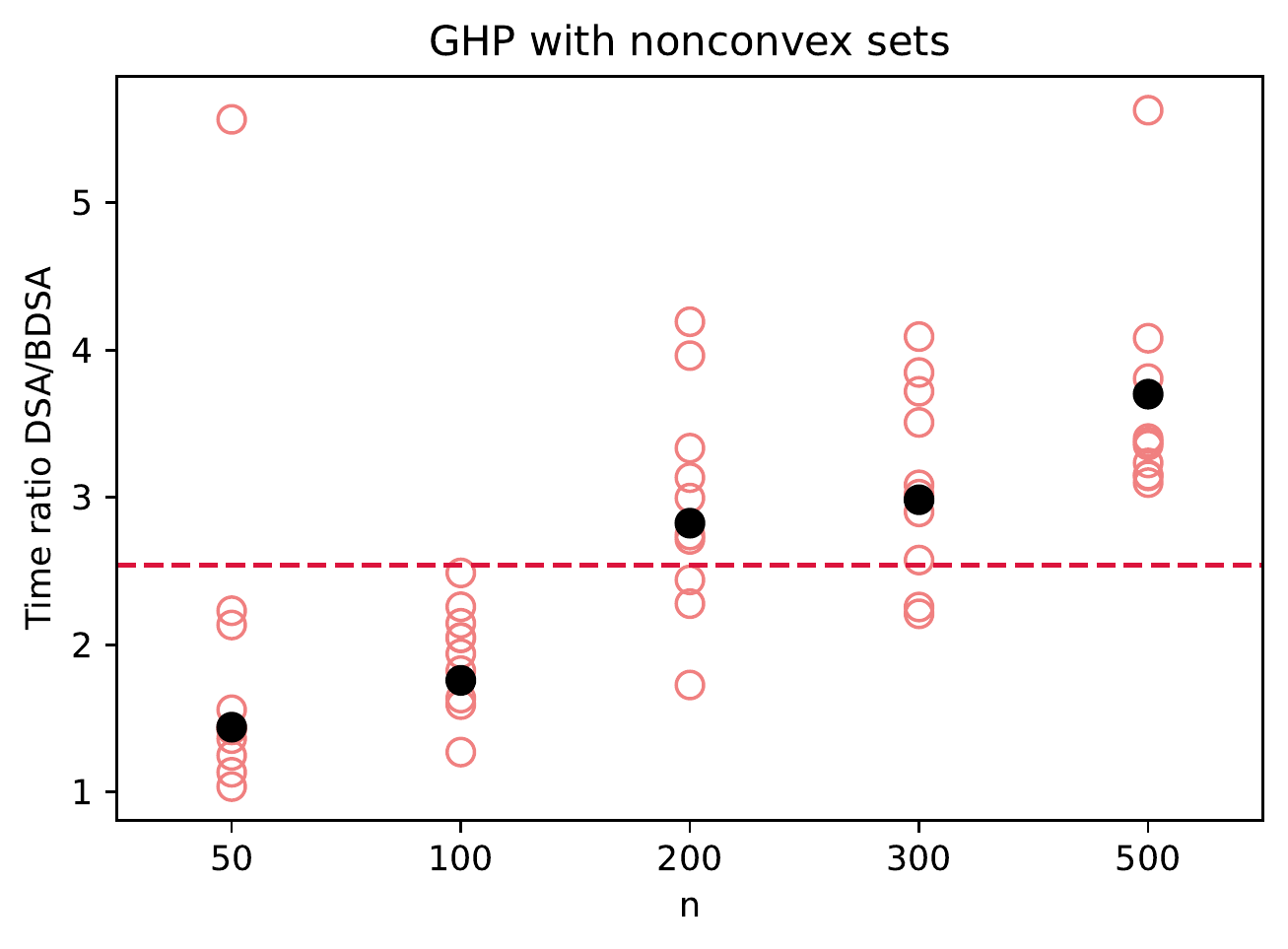}
 \caption{Let  $n\in{\{50, 100,200,300,500\}}$ and $m= 1.2\,n$. For every choice of dimensions we run both algorithms for $10$ different randomly generated problems and initialized in the same random starting point. The unfilled markers show  the ratio in iterations and  CPU time of DSA over BDSA for every particular instance, the black dots represent the ratio average for a fixed  $n$  and the dashed line the overall ratio average. }\label{fig:SplitVSBoosted_NC}
\end{figure}

\section{Conclusions and Future Work}\label{sec:conc}

In this paper, we developed a new algorithm for structured nonconvex optimization problems, which we named as \emph{Boosted Double-proximal Subgradient Algorithm} (BDSA). One of the main features of our method is the inclusion of a linesearch at the end of every iteration. If the stepsizes in every iteration of the linesearch are set to be $0$, then algorithms such as the \emph{Proximal Difference of Convex functions Algorithm} (PDCA)~\cite{sun2003proximal}, the \emph{Generalized Proximal Point Algorithm} (GPPA)~\cite{an2017convergence} and the \emph{Double-proximal Gradient Algorithm} (DGA)~\cite{bot2019doubleprox} can be recovered as particular cases of BDSA (see Remark~\ref{remark:particularcases}). Nevertheless, BDSA can also be applied to far more general problems.

The convergence of the sequence generated by BDSA is guaranteed under the usual assumptions required for this class of nonconvex problems. In addition, when the Kurdyka--{\L}ojasiewicz property holds, global convergence and convergence rates can be derived.

We demonstrated the advantages of the additional linesearch included in BDSA with multiple experiments. By recurring to a new family of test functions, we showed that BDSA remarkably improves  ``non-boosted'' methods in avoiding non-optimal critical points and achieving convergence to minima. Indeed, in our test problems BDSA had a $100\%$ rate of success, while the PDCA only reached the global minimum in $1.17\%$ of the instances.  Further, the boosting step significantly reduced the running time and the number of iterations employed by the algorithm. For example, the GPPA needed approximately $2.5$ more iterations and time for reaching the same accuracy than BDSA for an application of the minimum sum-of-squares clustering problem. For  different generalizations of the Heron problem, BDSA also managed to be much faster than its non-accelerated version, both in time and number of iterations.

To conclude, we point out two possible directions for future research.
\paragraph{Considering a nonmonotone linesearch} The authors in~\cite{ferreira2021boosted} have recently proposed a nonmonotone modification of the BDCA~\cite{aragon2018accelerating,AragonArtacho2022}. Their scheme is suitable for handling DC problems where both functions are nondifferentiable  by allowing the linesearch  to accept steps leading to some controlled growth in the objective function. It would be interesting to study whether a similar linesearch can be considered in Step $6$ of Algorithm~\ref{alg:1}.

\paragraph{Incorporating second-order information} The importance of including second-order information in algorithms for improved performance is widely well-recognized. For some applications, it can be crucial to extend our results by incorporating Hessian information into the data. This becomes particularly significant when dealing with data that is not twice continuously differentiable. Recent studies propose the integration of generalized Hessians (see, e.g., \cite{aragonartacho2023coderivativebased} and the references therein) to enable linesearches in Newton-like methods.

\bibliographystyle{siam}
\bibliography{references}

\begin{thebibliography}{10}

\bibitem{alizadeh2022new}
{\sc A.~H. Alizadeh~Tabrizian and N.~Bidabadi}, {\em A new boosted proximal
  point algorithm for minimizing nonsmooth {DC} functions}, Numer. Funct. Anal.
  Optim., 43 (2022), pp.~1459--1483.

\bibitem{MR2453095}
{\sc T.~Amahroq, J.-P. Penot, and A.~Syam}, {\em On the subdifferentiability of
  the difference of two functions and local minimization}, Set-Valued Anal., 16
  (2008), pp.~413--427.

\bibitem{an2017convergence}
{\sc N.~T. An and N.~M. Nam}, {\em Convergence analysis of a proximal point
  algorithm for minimizing differences of functions}, Optimization, 66 (2017),
  pp.~129--147.

\bibitem{Aragon2019}
{\sc F.~J. Arag\'{o}n, M.~A. Goberna, M.~A. L\'{o}pez, and M.~M.~L.
  Rodr\'{\i}guez}, {\em Nonlinear Optimization}, Springer, Cham, 2019.

\bibitem{AragonArtacho2022}
{\sc F.~J. Arag{\'o}n-Artacho, R.~Campoy, and P.~T. Vuong}, {\em The boosted
  {DC} algorithm for linearly constrained {DC} programming}, Set-Valued Var.
  Anal., 30 (2022), pp.~1265--1289.

\bibitem{aragon2018accelerating}
{\sc F.~J. Arag{\'o}n~Artacho, R.~M. Fleming, and P.~T. Vuong}, {\em
  Accelerating the {DC} algorithm for smooth functions}, Math. Program., 169
  (2018), pp.~95--118.

\bibitem{boostedDCA}
{\sc F.~J. Arag\'{o}n~Artacho and P.~T. Vuong}, {\em The boosted difference of
  convex functions algorithm for nonsmooth functions}, SIAM J. Optim., 30
  (2020), pp.~980--1006.

\bibitem{aragonartacho2023coderivativebased}
{\sc F.~J. Aragón-Artacho, B.~S. Mordukhovich, and P.~Pérez-Aros}, {\em
  Coderivative-based semi-{N}ewton method in nonsmooth difference programming},
  preprint, arXiv:\href{https://arxiv.org/abs/2301.03491}{2301.03491}
  [math.OC], 2023.

\bibitem{bot2019doubleprox}
{\sc S.~Banert and R.~I. Boț}, {\em A general double-proximal gradient
  algorithm for {DC} programming}, Math. Program., 178 (2019), pp.~301--326.

\bibitem{bauschke2017}
{\sc H.~H. Bauschke and P.~L. Combettes}, {\em Convex Analysis and Monotone
  Operator Theory in Hilbert Spaces}, 2nd ed., Springer, Cham, 2017.

\bibitem{bot2013douglas}
{\sc R.~I. Bot and C.~Hendrich}, {\em A {Douglas}--{Rachford} type primal-dual
  method for solving inclusions with mixtures of composite and parallel-sum
  type monotone operators}, SIAM J. Optim., 23 (2013), pp.~2541--2565.

\bibitem{campoy2022product}
{\sc R.~Campoy}, {\em A product space reformulation with reduced dimension for
  splitting algorithms}, Comput. Optim. Appl., 83 (2022), pp.~319--348.

\bibitem{MR1363364}
{\sc F.~H. Clarke, R.~J. Stern, and P.~R. Wolenski}, {\em Proximal smoothness
  and the lower-{$C^2$} property}, J. Convex Anal., 2 (1995), pp.~117--144.

\bibitem{correa2023optimality}
{\sc R.~Correa, M.~A. L\'{o}pez, and P.~P\'{e}rez-Aros}, {\em Optimality
  conditions in {DC} constrained mathematical programming problems}, preprint,
  arXiv:\href{https://arxiv.org/abs/2207.14059v3}{2207.14059v3} [math.OC],
  2023.

\bibitem{MR4228320}
{\sc R.~Correa, M.~A. L\'{o}pez, and P.~P\'{e}rez-Aros}, {\em Necessary and
  sufficient optimality conditions in {DC} semi-infinite programming}, SIAM J.
  Optim., 31 (2021), pp.~837--865.

\bibitem{Oliveira2020}
{\sc W.~de~Oliveira}, {\em The {ABC} of {DC} programming}, Set-Valued Var.
  Anal., 28 (2020), pp.~679--706.

\bibitem{MR1955649}
{\sc F.~Facchinei and J.-S. Pang}, {\em Finite-Dimensional Variational
  Inequalities and Complementarity Problems, {V}olume {II}}, Springer-Verlag,
  New York, 2003.

\bibitem{ferreira2021boosted}
{\sc O.~P. Ferreira, E.~M. Santos, and J.~C.~O. Souza}, {\em A boosted {DC}
  algorithm for non-differentiable {DC} components with non-monotone line
  search}, preprint, arXiv:\href{https://arxiv.org/ab
  s/2111.01290}{2111.01290v2} [math.OC], 2022.

\bibitem{MR3289054}
{\sc A.~F. Izmailov and M.~V. Solodov}, {\em Newton-Type Methods for
  Optimization and Variational Problems}, Springer, Cham, 2014.

\bibitem{lou2015weighted}
{\sc Y.~Lou, T.~Zeng, S.~Osher, and J.~Xin}, {\em A weighted difference of
  anisotropic and isotropic total variation model for image processing}, SIAM
  J. Imaging Sci., 8 (2015), pp.~1798--1823.

\bibitem{doi:10.1137/060655183}
{\sc P.-E. Maing\'{e} and A.~Moudafi}, {\em Convergence of new inertial
  proximal methods for {DC} programming}, SIAM J. Optim., 19 (2008),
  pp.~397--413.

\bibitem{MR2191744}
{\sc B.~S. Mordukhovich}, {\em Variational Analysis and Generalized
  Differentiation {I}}, Springer-Verlag, Berlin, 2006.

\bibitem{MR3823783}
{\sc B.~S. Mordukhovich}, {\em Variational Analysis and Applications},
  Springer, Cham, 2018.

\bibitem{mordukhovich2012applications}
{\sc B.~S. Mordukhovich, N.~M. Nam, and J.~Salinas}, {\em Applications of
  variational analysis to a generalized {H}eron problem}, Appl. Anal., 91
  (2012), pp.~1915--1942.

\bibitem{mordukhovich2012solving}
\leavevmode\vrule height 2pt depth -1.6pt width 23pt, {\em Solving a
  generalized {Heron} problem by means of convex analysis}, Amer. Math.
  Monthly, 119 (2012), pp.~87--99.

\bibitem{pham2014recent}
{\sc T.~Pham~Dinh and H.~A. Le~Thi}, {\em Recent advances in {DC} programming
  and {DCA}}, in Transactions on Computational Intelligence XIII, N.-T. Nguyen
  and H.~A. Le-Thi, eds., Springer, Berlin, Heidelberg, 2014, pp.~1--37.

\bibitem{Rockafellar1970}
{\sc R.~T. Rockafellar}, {\em Convex Analysis}, Princeton University Press,
  Princeton, New Jersey, 1970.

\bibitem{MR1491362}
{\sc R.~T. Rockafellar and R.~J.-B. Wets}, {\em Variational Analysis},
  Springer-Verlag, Berlin, 1998.

\bibitem{sun2003proximal}
{\sc W.-Y. Sun, R.~J. Sampaio, and M.~Candido}, {\em Proximal point algorithm
  for minimization of {DC} function}, J. Comput. Math., 21 (2003),
  pp.~451--462.

\bibitem{TAO1986249}
{\sc P.~D. Tao and E.~B. Souad}, {\em Algorithms for solving a class of
  nonconvex optimization problems. {M}ethods of subgradients}, in Fermat Days
  85: Mathematics for Optimization, J.-B. Hiriart-Urruty, ed., vol.~129 of
  North-Holland Mathematics Studies, North-Holland, 1986, pp.~249--271.

\bibitem{truong2021backtracking}
{\sc T.~T. Truong and H.-T. Nguyen}, {\em Backtracking gradient descent method
  and some applications in large scale optimisation. {P}art 2: {A}lgorithms and
  experiments}, Appl. Math. Optim., 84 (2021), pp.~2557--2586.

\bibitem{wen2018survey}
{\sc F.~Wen, L.~Chu, P.~Liu, and R.~C. Qiu}, {\em A survey on nonconvex
  regularization-based sparse and low-rank recovery in signal processing,
  statistics, and machine learning}, IEEE Access, 6 (2018), pp.~69883--69906.

\bibitem{zhang2009some}
{\sc T.~Zhang}, {\em Some sharp performance bounds for least squares regression
  with $l_1$ regularization}, Ann. Stat., 37 (2009), pp.~2109--2144.

\end{thebibliography}
\end{document}